\numberwithin{equation}{section}
\newcommand{\s}{\hspace{0.5pt}}
\newcommand{\f}[1]{\footnote{\textcolor{blue}{#1}}}
\newcommand{\ccdot}{\,\cdot\,}
\newcommand{\op}{\overline \p}
\newcommand{\ol}{\overline}
\DeclareMathOperator{\tr}{tr}
\newtheorem{theorem}{Theorem}[section]
\newtheorem{lemma}[theorem]{Lemma}
\newtheorem*{lemma*}{Lemma}
\newtheorem{proposition}[theorem]{Proposition}
\newtheorem{remark}[theorem]{Remark}
\title[Inverse source problem for the Monge-Amp\`ere equation]{An inverse problem for the Monge-Amp\`ere  equation}
\author[T. Liimatainen]{Tony Liimatainen}
\address{Department of Mathematics and Statistics, University of Helsinki, Helsinki, Finland}
\curraddr{}
\email{tony.liimatainen@helsinki.fi}
\author[Y.-H. Lin]{Yi-Hsuan Lin}
\address{Department of Applied Mathematics, National Yang Ming Chiao Tung University, Hsinchu, Taiwan \& Fakult\"at f\"ur Mathematik, University of Duisburg-Essen, Essen, Germany}
\curraddr{}
\email{yihsuanlin3@gmail.com}
\keywords{Inverse source problem, Monge-Amp\`ere equation, fully nonlinear, anisotropic Calder\'on problem,  stationary phase method}
\subjclass[2020]{35R30, 35J25, 35J60, 35J96}
\newcommand{\C}{{\mathbb C}}
\newcommand{\R}{{\mathbb R}}
\newcommand{\N}{{\mathbb N}}
\newcommand{\eps}{\epsilon}
\newcommand {\p} {\partial}
\newcommand{\LC}{\left(}
\newcommand{\RC}{\right)}
\newcommand{\wt}{\widetilde}
\newcommand{\norm}[1]{\lVert #1 \rVert}
\newcommand{\abs}[1]{\left\lvert #1 \right\rvert}%absolute value
\DeclareMathOperator{\supp}{supp} %support
\begin{document}

	\maketitle
	\begin{abstract}
		We extend the study of inverse boundary value problems to the setting of fully nonlinear PDEs by considering an inverse source problem for the Monge--Amp\`ere equation 
		\[
		\det D^2 u = F.
		\]
		We prove that, on a convex Euclidean domain in the plane, the associated Dirichlet-to-Neumann (DN) map uniquely determines a positive source function $F$. The proof relies on recovering the Hessian of a solution to the equation, which is interpreted as a Riemannian metric $g$. Interestingly, although the equation is posed on a Euclidean domain, the inverse problem becomes anisotropic since the metric $g$ appears as a coefficient matrix in the linearized equation.
		
		As an intermediate step, we prove that the DN map of the non-divergence form equation
		\[
		g^{ab} \partial_{ab} v = 0
		\]
		uniquely determines the conformal class of the metric $g$ on a simply connected planar domain, without the usual diffeomorphism invariance. 
		To address the challenges of full nonlinearity, we develop asymptotic expansions for complex geometric optics solutions in the planar setting and solve a resulting nonlocal
		$\op$-equation by proving a unique continuation principle for it. These techniques are expected to be applicable to a wide range of inverse problems for nonlinear equations.
	\end{abstract}

	\tableofcontents

	\section{Introduction}\label{sec: introduction}
	The Monge--Amp\`ere equation is a paradigmatic \emph{fully nonlinear} (possibly degenerate) partial differential equation (PDE), introduced nearly two centuries ago by Monge \cite{monge1784memoire} and Amp\`ere \cite{ampere1819memoire}. In general form, it is written as
	\begin{equation}\label{general MA equation}
		\det D^2 u = f(x,u,\nabla u) \quad \text{in } \Omega,
	\end{equation}
	where $\Omega \subset \mathbb{R}^n$ is an open domain, $u : \Omega \to \mathbb{R}$ is a solution, $D^2u$ denotes the Hessian matrix of $u$, and $f : \Omega \times \mathbb{R} \times \mathbb{R}^n \to \mathbb{R}$ is a given source function. Under mild assumptions on the solution, $f$ and $\Omega$, the equation becomes elliptic, enabling the application of classical regularity theory (see Remark~\ref{remark: source and domain conditions}).  
	Notably, Figalli was awarded the Fields Medal in 2018 in part for his contributions to the Monge--Amp\`ere equation: see \cite{Figalli17} for a comprehensive introduction to its elliptic theory and applications.

	The Monge--Amp\`ere equation is deeply intertwined with geometry, analysis, applied mathematics, and physics.
	\begin{itemize}
		\item An optimal transport $\nabla u$ between mass densities $\rho_0$ and $\rho_1$ with quadratic cost is governed by the Monge-Ampere equation
		\[
		\det D^2 u(x) =\frac{\rho_0(x)}{\rho_1(\nabla u(x))}.
		\]
		Optimal transport appears in many applications such as economics, meteorology, image processing and computer vision, and fluid dynamics. We refer to the book \cite{Villani_09_optimal} by Villani for further details about the applications.

		\item In differential geometry, the graph $(x,u(x))$ of a solution to 
		\[
		\det D^2 u = K(x)(1 + |\nabla u|^2)^{\frac{n+2}{2}},
		\]
		has prescribed Gaussian curvature $K(x)$, relating to the classical Minkowski problem \cite{minkowski1897allgemeine, minkowski1903} of constructing convex hypersurfaces with specified curvature.
		
		\item The Calabi--Yau conjecture asserts that a compact K\"ahler manifold with zero first Chern class admits a Ricci-flat K\"ahler metric, which reduces to solving the complex Monge-Ampère equation. We refer readers to \cite{Yau_CPAM_1978, Aubin_1982_MA} for further studies.

	\end{itemize}
	For more comprehensive introduction and studies of the Monge-Amp\`ere equation, see \cite{calabi_1972, CY_1986, TW00_Berstein, TW02_affine, TW05_Plateau, Figalli17}.
	
	In this work, we focus on the spatially dependent source case, 
	\begin{equation}\label{spatial source}
		f(x,u,\nabla u) = F(x),
	\end{equation}
	and study an \emph{inverse source problem} of recovering the function $F$ from boundary measurements. In this case,  the equation \eqref{general MA equation} reads 
	\begin{equation}\label{general MA equation 2}
		\det D^2u = F(x),
	\end{equation}
	for some positive function $F$ satisfying suitable regularity conditions. Our primary objective is to investigate the recovery of the function $F$ from the DN map corresponding to \eqref{general MA equation 2}.

	To the best of our knowledge, this work establishes the first uniqueness result for an inverse source problem governed by a fully nonlinear PDE. A central novelty lies in the recovery of the metric only up to a conformal factor, but without a natural diffeomorphism from the first linearized equation. Moreover, after somewhat involved asymptotic analysis for the integral identity of the second linearized equation, the resulting equation is a second-order $\op$-equation with a nonlocal $\op^{-1}$ lower-order perturbation. We prove a unique continuation property (UCP) for this non-local $\op$ equation, specifically the equation to show that the conformal factor is identically equal to $1$. We also anticipate that these methods will have influence well beyond the Monge--Amp\`ere model. The reason for the nonlocal equation seems to be in the full nonlinearity.

	Inverse problems of parameter identification, encompassing both coefficients and source functions, in nonlinear partial differential equations have attracted considerable interest in recent decades. Among these, the determination of nonlinear laws presents profound challenges due to inherent nonlinearity and severe ill-posedness. The modern approach to such problems can be traced back to the early 1990s, notably through Isakov's pioneering work \cite{isakov1993uniqueness_parabolic}, which introduced the idea of linearizing the nonlinear Dirichlet-to-Neumann (DN) map $C^\infty(\partial \Omega) \to C^\infty(\partial \Omega)$. This linearization reduces the nonlinear inverse problem to one for a linear PDE, enabling the use of classical techniques. Subsequently, second-order linearizations, involving data depending on two parameters, have further advanced the field \cite{AYT2017direct, CNV2019reconstruction, KN002,sun1996quasilinear,sun2010inverse,sun1997inverse}.

	More recently, a novel method has emerged in the study of inverse problems for semilinear elliptic equations  \cite{FO20, LLLS2019nonlinear}. These works exploit nonlinearity not as an obstacle, but as a constructive tool, building on the foundational insights of \cite{KLU2018}, which examined inverse problems for nonlinear equations on Lorentzian manifolds and developed the so-called higher order linearization method. By harnessing nonlinear interactions via higher order linearizations, these approaches have solved inverse problems in contexts where methods for linear equations fail.
	
	Following these breakthroughs, a substantial body of literature has developed using higher order linearization techniques to address inverse problems for various nonlinear PDEs. Let us mention here works that address nonlinear elliptic equations. Key contributions include \cite{LLLS2019partial, LLST2022inverse, KU2019remark, KU2019partial, FLL2021inverse} on semilinear elliptic equations, often with partial boundary data. 
	Quasilinear elliptic inverse problems have been studied in \cite{KKU2022partial, CFKKU2021calderon, LW24_quasi}, while inverse problems for the minimal surface equation (quasilinear) are treated in \cite{ABN_20_minimal, carstea2024inverse, CLLT2023inverse_minimal, CLT24, nurminen2023inverse}. The latter have also led to novel applications in AdS/CFT physics \cite{Jokela2025}. Other related works, including semilinear elliptic equations under various settings and fractional elliptic inverse problems, can be found in \cite{LL2019global, LL2020inverse, LSX_22_IP, harrach2022simultaneous, ST_23_single, LL25_book}.  We refer the reader to the recent survey \cite{lassas2025introduction} for a comprehensive introduction and for further references to inverse problems for semilinear elliptic and hyperbolic equations.

	\subsection{Mathematical formulations and main results}
	The main contribution of this work is a uniqueness result for an inverse source problem for the Monge-Amp\`ere equation in a convex planar domain. The mathematical formulation is as follows.
	
	Let $\Omega \subset \mathbb{R}^2$ be a bounded, uniformly convex domain with $C^\infty$-smooth boundary $\partial \Omega$. Given a source function $F = F(x) \in C^{\infty}(\overline{\Omega})$ satisfying $F \geq c_0 > 0$ for some constant $c_0 > 0$, let $u : \Omega \to \mathbb{R}$ be the solution to the Dirichlet boundary value problem:
	\begin{align}\label{MA equation}
		\begin{cases}
			\det D^2 u = F & \text{in } \Omega, \\
			u = \varphi & \text{on } \partial \Omega,
		\end{cases}
	\end{align}
	where $D^2 u$ denotes the Hessian matrix of $u$.

	To ensure ellipticity (i.e., convexity of solutions), we assume that the source function satisfies 
	\[
	F(x) \geq c_0 > 0 \quad \text{in } \Omega,
	\] 
	for some constant $c_0>0$. We also assume that the boundary data $\varphi \in C^{\infty}(\partial \Omega)$.  
	Under these conditions, the boundary value problem~\eqref{MA equation} is (locally) well-posed.  
	Further details will be provided in Section~\ref{sec: well} (see also \cite{Figalli17} for additional discussion).  
	Thanks to this well-posedness, we can define the Dirichlet-to-Neumann (DN) map associated with \eqref{MA equation} as
	\begin{equation}\label{DN map}
		\Lambda_F : C^{\infty}(\p \Omega)\to C^{\infty}(\p \Omega), 
		\qquad 
		\varphi \mapsto \left. \p_\nu u_{\varphi} \right|_{\p \Omega},
	\end{equation}
	for any $\varphi$ sufficiently small in an appropriate sense.  
	Here $u_\varphi \in C^{\infty}(\overline{\Omega})$ denotes the unique solution to \eqref{MA equation}, and 
	\[
	\p_{\nu}u_{\varphi}=\nu \cdot \nabla u_{\varphi}
	\] 
	is the Neumann derivative with respect to the unit outer normal $\nu$ on $\p\Omega$.  
	The inverse problem for the Monge-Ampere equation we address is as follows.

	\begin{enumerate}[(\textbf{IP})]
		\item\label{IP} \textbf{Inverse Source Problem.} Can we determine the unknown source $F$ in $\Omega$ by using the knowledge of the DN map $\Lambda_F$?
		
	\end{enumerate}
	
	\begin{remark}\label{remark: inverse source linear}
		Before addressing the nonlinear setting, we briefly recall the obstruction to non-uniqueness to an inverse source problem in the linear case. Let \(\Omega \subset \mathbb{R}^n\) be a bounded open set with sufficiently regular boundary~\(\partial \Omega\), where \(n \geq 2\). Consider the Poisson equation
		\begin{equation}\label{equ: linear source 1}
			\begin{cases}
				\Delta u = F & \text{in } \Omega, \\
				u = \varphi & \text{on } \partial \Omega.
			\end{cases}
		\end{equation}
		Given an arbitrary function $\psi \in C^2(\Omega)$ with vanishing Cauchy data on $\p\Omega$, define $v := u + \psi$ in $\Omega$. Then $v$ satisfies
		\begin{equation}\label{equ: linear source 2}
			\begin{cases}
				\Delta v = F + \Delta \psi & \text{in } \Omega, \\
				v = \varphi & \text{on } \partial \Omega.
			\end{cases}
		\end{equation}
		
		Then, the following observations can be made:
		\begin{enumerate}[(i)]
			\item The Cauchy data of \eqref{equ: linear source 1} and \eqref{equ: linear source 2} coincide. Since $\psi \in C^2(\Omega)$ with vanishing Cauchy data was otherwise arbitrary, the inverse source problem is solvable only up to the gauge symmetry $F \mapsto F + \Delta \psi$. In particular, even if the sources agree on all orders on the boundary, the interior source cannot be uniquely determined.

			\item Related non-uniqueness phenomena have been investigated in the context of semilinear equations and shown that for some nonlinearities the gauge symmetry breaks, leading to unique recovery: see~\cite{LL24_elliptic_source} for the semilinear elliptic case and~\cite{KLL_reaction_source} for the semilinear parabolic case. Very recently, the work \cite{liimatainen25_mean_curvature} determined the source uniquely for a quasilinear elliptic equation, and \cite{qiu2025uniqueness}  addresses the inverse problem of simultaneously recovering multiple unknown parameters for semilinear wave equations.
		\end{enumerate}
		
	\end{remark}

	Interestingly, for our inverse problem~\ref{IP}, we can provide an affirmative answer in two dimensions: the source function \(F\) can be uniquely determined from the DN map~\eqref{DN map} associated with the Monge--Amp\`ere equation~\eqref{MA equation}. Before presenting the main result, we introduce the following set of admissible boundary data:
	\begin{equation}\label{admissible boundary data}
		B_\delta(\partial \Omega) := \left\{ \varphi \in C^{\infty}(\partial \Omega) : \, \| \varphi \|_{C^{4,\alpha}(\partial \Omega)} < \delta \right\},
	\end{equation}
	for some $\alpha \in (0,1)$ and sufficiently small $\delta > 0$.  
	With these preparations in place, we are now ready to state our main theorem.

	\begin{theorem}[Unique determination]\label{theorem: uniqueness}
		Let $\Omega\subset \R^2$ be a bounded, uniformly convex domain with $C^{\infty}$ boundary $\p\Omega$. Let $F\in C^{\infty}(\overline{\Omega})$ be a source with $F\geq c_0>0$ for some positive constant $c_0$. Suppose that $F$ is known up to second order on the boundary, then the DN map $\Lambda_F$ of \eqref{MA equation} determines the source $F$ in $\Omega$ uniquely. 
		
		More specifically, let $F_1,F_2\in C^{\infty}(\overline{\Omega})$ be sources, and $F_1,F_2 \geq c_0>0$ for some positive constant $c_0$. Let  $\Lambda_{F_j}$ be the DN map of 
		\begin{equation}\label{MA equation j=1,2}
			\begin{cases}
				\det D^2 u^{(j)} =F_j &\text{ in }\Omega, \\
				u^{(j)}=\varphi &\text{ on }\p \Omega,
			\end{cases}
		\end{equation}
		for $j=1,2$. Suppose that $F_1$ and $F_2$ agree up to second order on the boundary, then 
		\begin{equation}\label{eq: same DN map}
			\Lambda_{F_1}(\varphi)=\Lambda_{F_2}(\varphi) \text{ on } \p \Omega, \quad \text{for any }\varphi \in B_\delta(\p\Omega),
		\end{equation}
		for $\delta>0$ sufficiently small, implies 
		\[
		F_1=F_2 \text{ in } \Omega.
		\] 
		
	\end{theorem}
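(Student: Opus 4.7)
The strategy is to apply higher-order linearization, exploiting rather than evading the full nonlinearity of $\det D^2 u$, in combination with complex geometric optics in the plane, and to use the two intermediate results previewed in the abstract: conformal-class uniqueness for the non-divergence operator $g^{ab}\partial_{ab}$, and a unique continuation principle for a nonlocal $\overline{\partial}$-equation.

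First I would set up the expansion. Let $u_j^{(0)}$ denote the convex solution to \eqref{MA equation j=1,2} with $F=F_j$ and zero boundary data, and set $g_j := D^2 u_j^{(0)}$, which is smooth and positive definite and plays the role of a Riemannian metric on $\overline{\Omega}$. For $\varphi = \sum_{i=1}^{N}\varepsilon_i\varphi_i$ with $\varepsilon=(\varepsilon_1,\dots,\varepsilon_N)$ small, well-posedness gives an analytic family $u_j^\varepsilon$ of solutions with
\[
u_j^\varepsilon = u_j^{(0)} + \sum_i \varepsilon_i v_j^{(i)} + \tfrac{1}{2}\sum_{i,k}\varepsilon_i\varepsilon_k w_j^{(i,k)} + O(|\varepsilon|^3).
\]
Differentiating $\det D^2 u_j^\varepsilon = F_j$ once in $\varepsilon_i$ at $\varepsilon=0$ and using $\partial_M\det(M)[H] = \det(M)\mathrm{tr}(M^{-1}H)$, the first-order correction satisfies the anisotropic, non-divergence form equation
\[
g_j^{ab}\partial_{ab} v_j^{(i)} = 0 \text{ in } \Omega, \qquad v_j^{(i)}\big|_{\partial\Omega} = \varphi_i,
\]
with $g_j^{ab}$ the inverse of $g_{j,ab}$. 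Since $\Lambda_{F_1}=\Lambda_{F_2}$ on small boundary data, differentiating in $\varepsilon$ shows the DN maps of these two linearized problems coincide. Invoking the intermediate conformal-class uniqueness, I conclude $g_2 = e^{2f}g_1$ for some smooth $f$ on $\overline{\Omega}$. The assumption that $F_1,F_2$ agree to second order on $\partial\Omega$, together with standard boundary determination of $u_j^{(0)}$ and its normal derivatives, forces $f$ to vanish to first order on $\partial\Omega$.

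The main work is in the second linearization. Differentiating $\det D^2 u_j^\varepsilon=F_j$ twice in $\varepsilon_i,\varepsilon_k$ and evaluating at $\varepsilon=0$ yields, schematically,
\[
g_j^{ab}\partial_{ab} w_j^{(i,k)} = Q_j\bigl(D^2 v_j^{(i)}, D^2 v_j^{(k)}\bigr),
\]
where $Q_j$ is the quadratic form coming from the second derivative of $\det$. Testing against a solution $v_*^{(j)}$ of the adjoint linearized equation, integrating by parts, and using $\Lambda_{F_1}=\Lambda_{F_2}$ to cancel all boundary contributions produces an interior integral identity that is linear in $f$ (through $g_2-g_1$) and quadratic in the test solutions. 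To localize this identity at an arbitrary interior point $z_0\in\Omega$, I would plug in two-dimensional complex geometric optics solutions of the form $v_j^{(i)} = e^{\tau\Phi}(a_0 + \tau^{-1}a_1 + \cdots)$, where $\Phi$ is a holomorphic phase with a nondegenerate critical point at $z_0$ and the amplitudes solve the first-order linearized equation to sufficiently high order in $\tau$. Stationary phase as $\tau\to\infty$ isolates the leading contribution at $z_0$. Because the nonlinearity is full rather than semilinear, the subleading amplitudes contain Cauchy-transform terms that do not cancel, yielding, after some manipulation, an equation of the form
\[
\overline{\partial}^{\,2}h + \mathcal{R}\bigl[\overline{\partial}^{-1}h\bigr] = 0 \text{ in } \Omega,
\]
where $h$ is a local algebraic expression in $f$ and $\mathcal{R}$ is a first-order differential operator with smooth coefficients. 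I expect the careful execution of this asymptotic expansion, and the correct identification of the nonlocal remainder, to be the main technical obstacle.

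To finish, combining the first-order boundary vanishing of $f$ (hence of $h$) with the unique continuation principle established for this nonlocal $\overline{\partial}$-equation forces $h\equiv 0$ in $\Omega$, hence $f\equiv 0$, hence $g_1=g_2$, and therefore
\[
F_1 = \det g_1 = \det g_2 = F_2 \text{ in } \Omega,
\]
which is the desired conclusion.
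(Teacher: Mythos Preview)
Your overall strategy matches the paper's: first linearization gives the non-divergence anisotropic equation and recovers $g$ up to a conformal factor (via Theorem~\ref{theorem: J=Id in intro}); the second linearization plus CGO asymptotics produces a nonlocal $\overline{\partial}$-equation for the conformal factor, and a UCP finishes. Two points where the paper's execution differs from your sketch are worth flagging. First, not all three CGOs carry Morse phases: the paper pairs two holomorphic phases \emph{without} critical points (one for the linearized equation, one for its adjoint), whose correction terms then admit full polynomial-in-$h$ expansions (Lemma~\ref{lem: asymptotic of remainders with linear}), with a single antiholomorphic Morse phase that supplies the stationary-phase localization; this asymmetry is what makes the $h$-asymptotics of the second integral identity close. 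Second, the equation that emerges is not homogeneous but has the form $\overline{\partial}(A\overline{\partial}\mathbf{c}+\alpha\mathbf{c})=\beta\,\overline{\partial}^{-1}(\gamma\mathbf{c})+H$ with a \emph{holomorphic} inhomogeneity $H$ (arising because translating the critical point shifts the nonlocal term by an affine function of the base point); the two-parameter Carleman estimate behind the UCP (Lemma~\ref{lemma: UCP with Carleman}) relies on $\overline{\partial}H=0$ and would fail for a generic right-hand side, so this is not a cosmetic detail.
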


	\begin{remark}\label{remark: source and domain conditions}
		We clarify the assumptions in Theorem~\ref{theorem: uniqueness}:
		
		\begin{enumerate}[(i)]
			\item\label{item 1 rmk uniquness} \textbf{Uniform convexity of $\Omega$ and positivity of $F$.}  
			These ensure global regularity---namely, that a solution $u$ admits a classical second derivative $D^2u$ and remains convex on $\Omega$ (see \cite[Remark 1.1]{Figalli17}). Consequently, the Monge--Ampère equation is locally well-posed on $B_\delta(\Omega)$, as its linearization around a convex solution is elliptic. This local well-posedness is essential for applying the first linearization method in the inverse problem.

			\item\label{item 2 rmk uniquness} \textbf{$F$ known up to second order on $\partial\Omega$.}  
			Knowledge of the DN map along with $F$ and up to its second order derivatives behavior on $\partial\Omega$ allows the recovery of the solution to the Monge-Amp\`ere equation up to at least fourth order on the boundary. This assumption, while convenient for avoiding standard boundary determination arguments in our analysis, can most likely be lifted.

		\end{enumerate}
		
		Condition~\ref{item 1 rmk uniquness} is essential for the forward problem of \eqref{MA equation}, while condition~\ref{item 2 rmk uniquness} is only used for the inverse problem.  
	\end{remark}

	The first linearization of the Monge-Amp\`ere equation \eqref{MA equation} (see Section \ref{sec: prel}) yields
	\begin{equation}\label{first linearized equation in the intro}
		u_0^{ab}\p_{ab}v=0 \quad \text{in } \Omega,
	\end{equation}
	where the coefficient matrix \( \big(u_0^{ab}\big) = \big(\p_{ab} u_0\big)^{-1} \) is defined via the solution \( u_0 \) to the original Monge-Amp\`ere equation with zero Dirichlet data ($\varphi=0$ on $\p\Omega$). Meanwhile, thanks to the convexity and regularity assumptions, the equation \eqref{first linearized equation in the intro} is a second-order, anisotropic elliptic equation in non-divergence form, as it has matrix-valued non-constant coefficients given by \( u_0^{ab} \). As an intermediate step in proving Theorem \ref{theorem: uniqueness}, we establish a uniqueness result for the associated Calder\'on problem of this linearized equation in the plane.

	\begin{theorem}\label{theorem: J=Id in intro}
		Let $\Omega \subset \R^2$ be a bounded open simply connected domain with $C^\infty$-smooth boundary $\p\Omega$, and $g=\big(g_{ab}\big)$ is a symmetric, positive definite and $C^\infty$-smooth $2\times 2$ matrix-valued function. Let $\Lambda_g'$ be the DN map of 
		\begin{equation}\label{equation g_ab}
			g^{ab}\p_{ab}v=0 \text{ in }\Omega,
		\end{equation}
		where $\big(g^{ab}\big) = \big(g_{ab}\big)^{-1}$.
		Then $\Lambda_g'$ determines $g$ up to a conformal factor $c=c(x)>0$ in $\Omega$ with $c|_{\p\Omega}=1$.   
		
		More specifically, let $g=g_j$ be  as above, and $\Lambda'_{g_j}$ be the DN map of the equation
		\begin{equation}\label{non-divergence elliptic equ in intro}
			\begin{cases}
				g^{ab}_j\p_{ab} v_j =0 &\text{ in }\Omega,\\
				v_j = \phi &\text{ on }\p \Omega,
			\end{cases}
		\end{equation}
		for $j=1,2$. Suppose that 
		\begin{equation}
			\Lambda'_{g_1}(\phi)=\Lambda'_{g_2}(\phi) \quad \text{for any}\quad \phi \in C^\infty(\p \Omega),
		\end{equation}
		then there exists a $C^\infty$-smooth conformal factor $c>0$ with $c|_{\p \Omega}=1$, such that
		\begin{equation}\label{g_1 = c g_2 in intro}
			g_1=cg_2 \text{ in }\Omega.
		\end{equation}
	\end{theorem}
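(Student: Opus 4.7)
The plan is to combine a short boundary determination, the standard integral identity for the non-divergence form operator, and two-dimensional complex geometric optics (CGO) whose phases come from the isothermal coordinates of each metric. First I would read off $g_j|_{\p\Omega}$ modulo a positive scalar from the principal symbol of the pseudodifferential operator $\Lambda'_{g_j}$ on $\p\Omega$: this amounts to the half-plane calculation for the constant-coefficient operator $g_j^{ab}(x_0)\p_{ab}$. Because the equation $g^{ab}\p_{ab}v=0$ is unchanged under $g\mapsto cg$, the scalar ambiguity is exactly the conformal gauge, and using this gauge I may normalize $g_1|_{\p\Omega}=g_2|_{\p\Omega}$, so that any interior conformal factor $c$ realizing $g_1=cg_2$ will automatically satisfy $c|_{\p\Omega}=1$.

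Next I would set up the testing identity. Let $L_jv=g_j^{ab}\p_{ab}v$ and let its formal adjoint be $L_j^*w=\p_{ab}(g_j^{ab}w)$, which has the same principal symbol as $L_j$. If $v_j$ solves $L_jv_j=0$ with common Dirichlet data $\phi$, then $\Lambda'_{g_1}=\Lambda'_{g_2}$ forces the full boundary gradient of $v_1-v_2$ to vanish, and Green's formula yields
\[
\int_\Omega \bigl(g_1^{ab}-g_2^{ab}\bigr)\p_{ab}v_2\cdot w\,dx = 0
\]
for every $w$ with $L_1^*w=0$. The boundary terms cancel precisely because of the vanishing Cauchy data of $v_1-v_2$. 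The task is then to populate this identity with a family of $v_2,w$ rich enough to extract pointwise information about $g_1^{ab}-g_2^{ab}$ modulo the conformal gauge.

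For this I would exploit 2D CGO. Simple connectivity of $\Omega$ provides a global complex-valued isothermal coordinate $z_j$ for each $g_j$, obtained by solving the Beltrami equation of $g_j$ and satisfying $g_j^{ab}\p_az_j\p_bz_j=0$ with $|\op z_j|<|\p z_j|$. I would then build
\[
v_2=e^{\tau\Phi_2}(a_2+r_2),\qquad w=e^{-\tau\Phi_1}(a_1+r_1),
\]
where $\Phi_j=F_j(z_j)$ is a prescribed holomorphic or antiholomorphic germ in $z_j$, so the eikonal equation $g_j^{ab}\p_a\Phi_j\p_b\Phi_j=0$ holds identically; amplitudes $a_j$ are chosen so that the $O(\tau)$ transport terms vanish, and remainders $r_j$ are constructed by inverting a $\op$-type corrector equation adapted to the complex structure of $g_j$, with $\|r_j\|_{L^p}=o(1)$ as $\tau\to\infty$ via Cauchy-transform estimates on $\Omega$. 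Substituting into the testing identity and using the eikonal for $g_2$ to rewrite $(g_1^{ab}-g_2^{ab})\p_a\Phi_2\p_b\Phi_2 = g_1^{ab}\p_a\Phi_2\p_b\Phi_2$, the leading $O(\tau^2)$ contribution becomes
\[
\tau^2\int_\Omega g_1^{ab}\p_a\Phi_2\p_b\Phi_2\,a_1a_2\,e^{\tau(\Phi_2-\Phi_1)}\,dx.
\]

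To finish I would fix an arbitrary $x_0\in\Omega$ and an arbitrary null covector $\xi$ of $g_2$ at $x_0$, and then choose the germs $F_1,F_2$ so that $d\Phi_2(x_0)=\xi$ and $\Re(\Phi_2-\Phi_1)$ has a strict nondegenerate maximum equal to $0$ at $x_0$; complex stationary phase (Laplace's method with complex phase, yielding $\tau^{-1}$ scaling in two real dimensions) then localizes the integral at $x_0$, and the vanishing of the identity forces $g_1^{ab}(x_0)\xi_a\xi_b=0$. Varying $\xi$ over the two complex-conjugate null branches of $g_2$ exhausts the null cone of $g_2$ at $x_0$, which in two dimensions is equivalent to $g_1(x_0)=c(x_0)g_2(x_0)$ for a smooth $c>0$; the boundary normalization above then gives $c|_{\p\Omega}=1$. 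The main obstacle I anticipate is the CGO construction itself: the corrector for the non-divergence operator $L_j$ is not the classical $\op$-equation but a $\op$-type equation with metric-dependent first-order perturbations, and it must be solved with uniform decay of remainders as $\tau\to\infty$. A related subtlety is pairing the branches of $F_1,F_2$, which live in the a priori distinct complex structures of $g_1$ and $g_2$, so that $\Re(\Phi_2-\Phi_1)$ is genuinely Morse at the prescribed $x_0$; this comparison of the two Beltrami coefficients is precisely where the planar simply connected setting is used in an essential way.
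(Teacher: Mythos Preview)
Your approach is genuinely different from the paper's, and it contains a real gap at exactly the place you flag as an ``obstacle.''

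The fatal difficulty is the localization step. You want $\Re(\Phi_2-\Phi_1)$ to have a strict nondegenerate interior maximum at a prescribed $x_0$, where $\Phi_1$ is holomorphic in the conformal structure of $g_1$ and $\Phi_2$ in that of $g_2$. But already in the trivial case $g_1=g_2$ this is impossible: then $\Phi_2-\Phi_1$ is holomorphic, so $\Re(\Phi_2-\Phi_1)$ is harmonic and cannot have an interior maximum. Switching one phase to antiholomorphic does not help, since the real part remains harmonic. If instead you try a purely oscillatory stationary phase, you would need $\Re\Phi_1\equiv\Re\Phi_2$, but $\Re\Phi_j$ is $g_j$-harmonic, and these conditions coincide only when the conformal classes already agree, which is what you are trying to prove. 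In short, without a priori control relating the two complex structures, you cannot make $e^{\tau(\Phi_2-\Phi_1)}$ localize at a prescribed interior point, and so you cannot extract the pointwise relation $g_1^{ab}(x_0)\xi_a\xi_b=0$ from the integral identity. This is not a technicality; it is the reason the two-dimensional anisotropic Calder\'on problem requires an indirect route.

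The paper proceeds quite differently and avoids this issue entirely. It rewrites $g^{ab}\partial_{ab}v=0$ as $(-\Delta_g+X_g\cdot\nabla)v=0$ with $X_g^i=-g^{kl}\Gamma(g)^i_{kl}$ the contracted Christoffel symbols, and then invokes an existing result (Imanuvilov--Uhlmann--Yamamoto) for Laplace--Beltrami plus drift to obtain a diffeomorphism $J$ with $J|_{\partial\Omega}=\mathrm{Id}$ and a conformal factor $c$ such that $g_1=c\,J^\ast g_2$ \emph{and} $X_{g_1}=c^{-1}J^\ast X_{g_2}$. The new ingredient is to exploit the second identity: writing out how contracted Christoffel symbols transform under pullback and conformal scaling, the compatibility of the two relations forces each component $J^m$ to satisfy $g_1^{kl}\partial_{kl}J^m=0$ with $J^m=x^m$ on $\partial\Omega$, so $J=\mathrm{Id}$ by uniqueness for this Dirichlet problem. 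The CGO machinery is not used at all in this step; the special algebraic structure of the drift does the work that your stationary phase was meant to do.
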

	
	Note that the determination of the metric in Theorem~\ref{theorem: J=Id in intro} is free from diffeomorphism ambiguity. In this sense, it constitutes a stronger result than the corresponding one for the divergence form anisotropic Calder\'on problem in two dimensions. It is not immediately clear whether the assumption that the domain $\Omega$ is simply connected can be lifted. \\

	\noindent \textbf{Novelty of the methods and outline of the proof.}  
	Theorem~\ref{theorem: uniqueness} establishes, for the first time to our knowledge, a uniqueness result for an inverse problem governed by a fully nonlinear elliptic PDE. Key novel contributions of our method include:
	
	\begin{itemize}
		\item \emph{Solving the first linearized problem:}  
		The first linearization leads to an elliptic, second-order PDE in non-divergence form, where the leading coefficient is the Hessian of the Monge--Amp\`ere solution \eqref{MA equation} with zero Dirichlet data, considered as a Riemannian metric $g$.  
		By reformulating this non-divergence form equation \eqref{equation g_ab} as 
		\[
		\big(-\Delta_g + X_g \cdot \nabla\big)u = 0,
		\]
		where the drift term $X_g$ is given by the contracted Christoffel symbols of $g$ (see Section \ref{sec: isometry Christoffel}). After applying a known result for the anisotropic Calderón problem to the above equation, we study the transformation properties of Christoffel symbols to eliminate the diffeomorphism gauge on a simply connected domain. This leads to global recovery of the metric tensor only up to a conformal factor $c>0$, normalized on the boundary by $c|_{\partial\Omega}=1$ (see Theorem \ref{theorem: J=Id in intro}).

		\item \emph{Analysis of the second linearized equation:} 
		To resolve the remaining conformal factor, we employ a class of complex geometric optics (CGO) solutions to the first linearized PDE.  
		The second linearization of the Monge--Amp\`ere equation leads to the integral identity
		\begin{equation}\label{second integral id in introduction}
			\int_{\Omega} v^\ast \,\tr \big\{ (D^2u_0)^{-1} D^2 v^{(1)} (D^2 u_0)^{-1} D^2 v^{(2)} \big\} \, dx,
		\end{equation}
		where $u_0$ is the solution to \eqref{MA equation} with zero Dirichlet data, and $\tr(A)$ denotes the trace of a matrix $A$. Here $v^{(1)}$ and $v^{(2)}$ are CGO solutions of the first linearized equation of \eqref{MA equation}, and $v^\ast$ is a CGO solution of the corresponding adjoint equation.  
		
		The number of derivatives versus solutions in the integral \eqref{second integral id in introduction} makes classical CGO constructions and asymptotic arguments insufficient in this context. For this purpose, we refine the earlier CGOs by deriving a polynomial expansion in $h$ for their correction terms when the associated phases do not have critical points.  Moreover, the asymptotic analysis ultimately yields a PDE of the form 
		\begin{equation}\label{eq:ucp_equation intro}
			\op (A\op \mathbf{c}(z)+\alpha(z)\mathbf{c}(z)) =\beta(z)\op^{-1}(\gamma(z) \mathbf{c}(z))+H(z),
		\end{equation}
		with a nonlocal lower operator $\op^{-1}$, where $A\neq 0$, $\alpha,\beta,\gamma$ are possibly complex-valued functions, and  $H$ is a holomorphic function. Here, $\mathbf{c}$ is the unknown, and we want to determine $\mathbf{c}=0$. To address this difficulty, we establish a unique continuation property (UCP) via a Carleman estimate for the equation \eqref{eq:ucp_equation intro}, where the UCP holds only when $H$ is holomorphic (see Section \ref{sec: Carleman and UCP}). This is a delicate result, and allows us to conclude that the conformal factor is identically $1$, thereby guaranteeing the unique recovery of the source $F$.

	\end{itemize}

	\subsection{Organization of the article}
	
	Section~\ref{sec: prel} presents the preliminaries: basic notations from complex analysis, local well-posedness results for~\eqref{MA equation}, and the higher-order linearization framework.  
	In Section~\ref{sec: boundary determination}, we prove a boundary determination result for solutions of~\eqref{MA equation}, which allows us to transfer the DN map from~\eqref{MA equation} to its linearized equations.  
	Section~\ref{sec: unique of diffeo} shows Theorem \ref{theorem: J=Id in intro}, that is, the diffeomorphism relating the metrics is the identity, and the metric can be uniquely determined only up to a conformal factor for an elliptic equation of non-divergence form.  
	In Section~\ref{sec: CGOs}, we present CGO solutions for the first linearized equation and carry out a refined asymptotic analysis of the remainder terms.  
	In Section~\ref{sec: Carleman and UCP}, we prove a UCP for a PDE with nonlocal lower order perturbations. This UCP, together with the CGO solutions, is applied in Section~\ref{sec: det of conformal factor} to show that the conformal factor is identically one, via the integral identity of the second linearized equation.  
	Finally, Section~\ref{sec: proof of uniqueness} combines all these results and completes the proof of Theorem~\ref{theorem: uniqueness}.

	\section{Preliminaries}\label{sec: prel}
	
	In this section, we will prepare several useful notations and tools for the study of the Monge-Ampère equation.
	
	\subsection{Notations, function spaces and some fundamental tools} 
	
	\subsubsection{Notations in complex analysis}
	
	Let us introduce the following standing notation in this article, which is used to identify $\R^2 =\C$. The differential operators $\nabla =(\p_{x_1}, \p_{x_2})$, $\p$ and $\overline{\p}$ on $\C$, which are given by 
	\begin{equation}\label{definition of d and d-bar operators}
		\begin{split}
			\p=\p_z =\frac{\p}{\p z}=\frac{1}{2}\LC \p_{x_1} -\mathsf{i}\p_{x_2} \RC, \quad 	\overline{\p} =\p_{\overline{z}}=\frac{\p}{\p \overline{z}}=\frac{1}{2}\LC \p_{x_1} +\mathsf{i}\p_{x_2} \RC,
		\end{split}
	\end{equation}
	where $z=x_1+\mathsf{i}x_2\in \C$, $z=x_1+\mathsf{i}x_2$ with $x_1,x_2\in \R$ and $\mathsf{i}=\sqrt{-1}$. 
	In addition, let us use $\p_{k}\equiv \p_k$ to simplify the notation, for $j=1,2$, then direct computations yield that 
	\begin{equation}\label{d and d-bar relation}
		\begin{split}
			\p_{1}=\p+ \overline{\p}, \quad  \p_{2}=\mathsf{i}\big( \p - \overline{\p} \big),
		\end{split}
	\end{equation}
	and 
	\begin{equation}
		\begin{split}
			\overline{\p}^2 - \p^2 = \frac{1}{4} \big( \p_{1}^2 + 2\mathsf{i}\p_{x_1}\p_{x_2} - \p_{x_2}^2  -\p_{x_1}^2 + 2\mathsf{i}\p_{x_1}\p_{x_2} + \p_{x_2}^2  \big) = \mathsf{i}\p_{x_1}\p_{x_2}.
		\end{split}
	\end{equation}
	Note that the Hessian of any $C^2$ function $f=f(x)=f(x_1,x_2)$ can be written as 
	\begin{equation}\label{Hessian matrix}
		\begin{split}
			D^2 f=\left( \begin{matrix}
				\p_{11} & \p_{12} \\
				\p_{12} & \p_{22} \end{matrix} \right) f=\left( \begin{matrix}
				\LC \p +\overline{\p}\RC^2  & \mathsf{i}\LC \p^2 -\overline{\p}^2 \RC \\
				\mathsf{i}\LC \p^2 -\overline{\p}^2 \RC & -\LC \p -\overline{\p}\RC^2 \end{matrix} \right) f(z),
		\end{split}
	\end{equation}
	where we identify $z=x_1+\mathsf{i}x_2\in \C$.
	In particular, as $\Phi$ is holomorphic (i.e., $\overline{\p}\Phi=0$), one can obtain 
	\begin{equation}\label{Hessian_holo}
		\begin{split}
			D^2 \Phi=\left( \begin{matrix}
				\LC \p +\overline{\p}\RC^2  & \mathsf{i}\LC \p^2 -\overline{\p}^2 \RC \\
				\mathsf{i}\LC \p^2 -\overline{\p}^2 \RC & -\LC \p -\overline{\p}\RC^2 \end{matrix} \right) 
			\Phi= \left( \begin{matrix}
				1  & \mathsf{i} \\
				\mathsf{i}   & - 1\end{matrix} \right) \p^2 \Phi,
		\end{split}
	\end{equation}
	and $\Psi$ is antiholomorphic (i.e., $\p \Psi=0$), we have 
	\begin{equation}\label{Hessian_antiholo}
		\begin{split}
			D^2 \Psi=\left( \begin{matrix}
				\LC \p +\overline{\p}\RC^2  & \mathsf{i}\LC \p^2 -\overline{\p}^2 \RC \\
				\mathsf{i}\LC \p^2 -\overline{\p}^2 \RC & -\LC \p -\overline{\p}\RC^2 \end{matrix} \right) 
			\Psi= \left( \begin{matrix}
				1  & -\mathsf{i} \\
				-\mathsf{i}   & - 1\end{matrix} \right) \overline{\p}^2 \Psi.
		\end{split}
	\end{equation}
	Therefore, the Hessian \eqref{Hessian matrix} can be written as 
	\begin{equation}\label{Hessian matrix in complex variable}
		\begin{split}
			D^2 f=\left( \begin{matrix}
				\LC \p +\overline{\p}\RC^2  & \mathsf{i}\LC \p^2 -\overline{\p}^2 \RC \\
				\mathsf{i}\LC \p^2 -\overline{\p}^2 \RC & -\LC \p -\overline{\p}\RC^2 \end{matrix} \right) f= A \p^2 f + B\overline{\p}^2 f + 2I_{2\times 2} 	\p \op  f,
		\end{split}
	\end{equation}
	where $A,B$ are matrices given by 	
	\begin{equation}\label{matrices A B}
		A:= \left( \begin{matrix}
			1  & \mathsf{i} \\
			\mathsf{i}   & - 1\end{matrix} \right) \quad \text{and} \quad B:= \left( \begin{matrix}
			1  & -\mathsf{i} \\
			-\mathsf{i}   & - 1\end{matrix} \right)
	\end{equation}
	are derived from \eqref{Hessian_holo} and \eqref{Hessian_antiholo}, and we used $4 \overline{\p}\p =\Delta$. Here, $I_{2\times 2}$ denotes the $2\times 2$ identity matrix.
	These notations will be used throughout the article. In particular, the formula \eqref{Hessian matrix in complex variable} is crucial for the asymptotic analysis of the second integral identity.

	\subsubsection{Function spaces}
	Let us introduce the notion of function spaces that we use in this article. The notation $C^{k,\alpha}(K)$ denotes the H\"older continuous space, for some compact set $K\subset \R^2$, where $k\in \N\cup \{0\}$ denotes the $k$-th order differentiability, and the exponent $\alpha \in (0,1)$. It is also known that $C^{k,\alpha}(K)$ is an algebra, in the sense that 
	\begin{equation}
		\norm{uv}_{C^{k,\alpha}(K)}\leq C \big( \norm{u}_{C^{k,\alpha}(K)} \norm{v}_{L^\infty(K)}+ \norm{u}_{L^\infty(K)}\norm{v}_{_{C^{k,\alpha}(K)}}\big),
	\end{equation}
	for some constant $C>0$ independent of $u,v\in C^{k,\alpha}(K)$. It is known that $C^{k,\alpha}(K)$ is a Banach space.
	
	\subsubsection{Matrices computations}
	
	We also collect several useful properties for matrix computations. Let us first recall the Jacobi formula for a matrix, which is given by 
	\begin{equation}\label{Jacobi formula}
		\frac{d}{dt}\det A(t)= (\det A(t)) \tr \Big( A(t)^{-1}\frac{d A(t)}{dt}\Big),
	\end{equation}
	for any differentiable $n\times n $ matrix-valued functions $A(t)$. Moreover, it also holds
	\begin{equation}\label{derivative of matrix A}
		\frac{d}{dt}A^{-1}(t)=-A(t)A'(t)A^{-1}(t),
	\end{equation}
	for any differentiable matrix $A(t)$. These formulas will be used in the forthcoming analysis to address our problems.

	\subsection{Well-posedness}\label{sec: well}
	
	Let us establish the (local) well-posedness of \eqref{MA equation} and prove the continuous dependence of solutions on the Dirichlet data. Following the approach in \cite{Figalli17}, consider a nonempty, bounded, and uniformly convex domain \(\Omega \subset \mathbb{R}^2\). 
	Since the source term \( F \) is bounded away from zero, the solution \( u \) to \eqref{MA equation} is convex in \(\Omega\). Moreover, under these assumptions, one can obtain improved regularity results for certain classes of solutions.
	
	Define the nonlinear differential operator
	\begin{equation}
		Q(u) := \det D^2 u.
	\end{equation}
	Our goal is to prove the following result regarding the solvability and stability of solutions to the equation involving \( Q(u) \).

	\begin{proposition}[Well-posedness]\label{prop: well-posed}
		Let $\Omega \subset \R^2$ be a uniformly convex domain with $C^{\infty}$-boundary $\p \Omega$.
		
		\begin{enumerate}[(i)]
			\item\label{item 1 well-posed}
			Given $\alpha \in (0,1)$, let $F \in C^{4,\alpha}(\overline{\Omega})$ with $F(x)\geq c_0>0$, for some constant $c_0$. Then there exists a unique convex solution $u_0\in C^{4,\alpha}(\overline{\Omega})$ of 
			\begin{align}\label{MA equation in well-posed zero BC}
				\begin{cases}
					\det D^2 u_0 =F &\text{ in }\Omega, \\
					u_0=0&\text{ on }\p \Omega.
				\end{cases}
			\end{align}

			\item\label{item 2 well-posed} There exists constants $\delta,C>0$ such that for any $\varphi$ in the set $B_\delta(\p \Omega)$ given by \eqref{admissible boundary data}, there exists a solution $u \in C^{4,\alpha}(\overline{\Omega})$ of
			\begin{align}\label{MA equation in well-posed}
				\begin{cases}
					\det D^2 u =F &\text{ in }\Omega, \\
					u=\varphi &\text{ on }\p \Omega,
				\end{cases}
			\end{align}
			which satisfies 
			\begin{equation}\label{continuous estimate in well-posed}
				\norm{u-u_0}_{C^{4,\alpha}(\overline{\Omega})} \leq C  \norm{\varphi}_{C^{4,\alpha}(\p \Omega)},
			\end{equation}
			where $u_0\in C^{4,\alpha}(\overline{\Omega})$ is the convex solution to \eqref{MA equation in well-posed zero BC}. Furthermore, the solution $u$ is unique in the class $\big\{ w\in C^{4,\alpha}(\overline{\Omega}): \, \norm{w-u_0}_{C^{4,\alpha}(\overline{\Omega})}\leq C\delta \big\}$
			
			\item\label{item 3 well-posed} In particular, if $F\in C^{\infty}(\overline{\Omega})$ and $\varphi\in B_\delta(\p\Omega)$, the solution $u$ of \eqref{MA equation in well-posed} belongs to $C^\infty(\overline{\Omega})$. In addition, there exist $C^\infty$-Fr\'echet differentiable maps 
			\begin{equation}\label{solution map}
				\begin{split}
					S&: B_\delta(\p \Omega) \to C^{\infty}(\overline{\Omega}),  \quad\ \, \varphi \mapsto u_\varphi , \\
					\Lambda&: B_\delta(\p \Omega) \to C^{\infty}(\p\Omega), \quad  \varphi \mapsto \left. \p_{\nu}u_{\varphi} \right|_{\p\Omega}.
				\end{split}
			\end{equation}
		\end{enumerate}

	\end{proposition}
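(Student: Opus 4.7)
For part~\ref{item 1 well-posed}, the plan is simply to invoke the classical existence and regularity theory for the Dirichlet problem for the Monge--Amp\`ere equation on uniformly convex domains. Under the assumptions that $\Omega$ is uniformly convex with smooth boundary, that $F \in C^{4,\alpha}(\overline{\Omega})$ is bounded below by $c_0>0$, and that the boundary datum (here $0$) is admissible, the Caffarelli--Nirenberg--Spruck/Krylov regularity theory, as presented for instance in \cite{Figalli17}, produces a unique convex solution $u_0 \in C^{4,\alpha}(\overline{\Omega})$. In particular $u_0$ is strictly convex so that $D^2 u_0$ is uniformly positive definite on $\overline{\Omega}$, and thus $u_0^{ab}:=(\p_{ab}u_0)^{-1}$ is a well-defined $C^{2,\alpha}$ positive definite matrix field.

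For part~\ref{item 2 well-posed}, the strategy is to apply the implicit function theorem in H\"older scales to the $C^\infty$ nonlinear map
\[
\mathcal{F}:C^{4,\alpha}(\overline{\Omega})\times C^{4,\alpha}(\p\Omega)\to C^{2,\alpha}(\overline{\Omega})\times C^{4,\alpha}(\p\Omega),\qquad (u,\varphi)\mapsto (\det D^2 u-F,\,u|_{\p\Omega}-\varphi),
\]
noting that $\det D^2 u$ is a polynomial in second derivatives and hence maps $C^{4,\alpha}(\overline{\Omega})$ into $C^{2,\alpha}(\overline{\Omega})$. By the Jacobi formula~\eqref{Jacobi formula}, the partial Fr\'echet derivative at $(u_0,0)$ in the $u$-slot is
\[
D_u\mathcal{F}(u_0,0)[v]=\big(\tr(\Cof(D^2u_0)D^2v),\,v|_{\p\Omega}\big)=\big(F\,u_0^{ab}\p_{ab}v,\,v|_{\p\Omega}\big).
\]
Because $(u_0^{ab})$ is uniformly positive definite and of class $C^{2,\alpha}$, this is a uniformly elliptic linear second-order operator in non-divergence form, and standard Schauder theory (the Dirichlet problem for linear elliptic equations with $C^{2,\alpha}$ coefficients) yields that $D_u\mathcal{F}(u_0,0)$ is a Banach isomorphism $C^{4,\alpha}(\overline{\Omega})\to C^{2,\alpha}(\overline{\Omega})\times C^{4,\alpha}(\p\Omega)$. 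The IFT then provides $\delta,C>0$ and a $C^\infty$ local inverse $\varphi\mapsto u_\varphi$ defined on $B_\delta(\p\Omega)$ into a $C\delta$-neighborhood of $u_0$ in $C^{4,\alpha}(\overline{\Omega})$; the estimate~\eqref{continuous estimate in well-posed} is exactly the local Lipschitz statement from the IFT applied at $\varphi=0$, and uniqueness in that neighborhood is also part of the IFT conclusion.

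For part~\ref{item 3 well-posed}, $C^\infty$ interior and boundary regularity of $u_\varphi$ when $F,\varphi\in C^\infty$ follows by bootstrapping Schauder estimates on the linearized equation $F u_{\varphi}^{ab}\p_{ab}\dot u=\text{(smooth data)}$ obtained by differentiating $\det D^2 u_\varphi=F$ in directions tangent to $\p\Omega$ or by differentiating the IFT-branch itself; alternatively, one applies the Krylov--Evans boundary theory directly to the fully nonlinear equation. The smoothness of the solution map $S:\varphi\mapsto u_\varphi$ from $B_\delta(\p\Omega)\subset C^\infty(\p\Omega)$ into $C^\infty(\overline{\Omega})$ is then obtained by combining the IFT statement in the $C^{k,\alpha}$-scale for every $k$ and identifying the resulting solutions, and the DN map $\Lambda:\varphi\mapsto \p_\nu u_\varphi|_{\p\Omega}$ is the composition of $S$ with the continuous linear trace operator $u\mapsto \p_\nu u|_{\p\Omega}$, hence $C^\infty$ Fr\'echet differentiable on $B_\delta(\p\Omega)$.

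The only technical subtlety I would expect is a careful bookkeeping of the regularity indices so that the IFT actually closes: the $C^{4,\alpha}\to C^{2,\alpha}$ setup for $\det D^2$ is dictated by it being a second-order operator, and the matching $C^{4,\alpha}$ boundary trace space is needed so that the linearized Dirichlet problem is an isomorphism via Schauder theory. Once these scales are chosen consistently, all three parts reduce to standard linear elliptic theory plus the IFT, with the convexity of $u_0$ (coming from $F\geq c_0>0$ and Part~\ref{item 1 well-posed}) being what makes the linearization elliptic in the first place.
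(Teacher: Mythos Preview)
Your proposal is correct and matches the paper's own proof essentially step for step: Part~(i) is reduced to the classical Monge--Amp\`ere existence and regularity theory in \cite{Figalli17}, Part~(ii) is obtained via the implicit function theorem applied to the map $(u,\varphi)\mapsto(\det D^2u-F,\,u|_{\partial\Omega}-\varphi)$ between the same H\"older scales $C^{4,\alpha}\times C^{4,\alpha}\to C^{2,\alpha}\times C^{4,\alpha}$, with the linearization shown to be an isomorphism by Schauder theory for the non-divergence elliptic operator $F\,u_0^{ab}\partial_{ab}$, and Part~(iii) follows by bootstrapping over $k$ together with the smoothness of the IFT branch and linearity of the normal trace. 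There is no substantive difference in method or level of detail.
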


	\begin{proof}
		For \ref{item 1 well-posed}, as the Dirichlet boundary value vanishes, since $0<c_0 \leq F \in C^{2,\alpha}(\overline{\Omega})$ in $\overline{\Omega}$, using the method of continuity in \cite[Theorem 3.4]{Figalli17} and \cite[Remark 1.1]{Figalli17}, there exists a unique convex solution $u_0\in C^{4,\alpha}(\overline{\Omega})$ of \eqref{MA equation in well-posed zero BC}. More generally, if $F\in C^{k,\alpha}(\overline{\Omega})$, then there exists a unique solution $u\in C^{k+2,\alpha}(\overline{\Omega})$ solves \eqref{MA equation in well-posed zero BC}, for any integer $k\geq 2$.\\

		For \ref{item 2 well-posed}, let us prove the existence of solutions to \eqref{MA equation in well-posed} by the implicit function theorem for Banach spaces (see \cite[Theorem 10.5]{renardy2006introduction}).
		Let 
		\begin{equation}
			X:=C^{4,\alpha}(\p\Omega), \quad Y:= C^{4,\alpha}(\overline{\Omega}), \quad Z:=C^{2,\alpha}(\overline{\Omega})\times C^{4,\alpha}(\p \Omega)
		\end{equation}
		be Banach spaces. Consider the map 
		\begin{equation}\label{mapping prop in well-posed}
			\Phi: X\times Y\to Z, \quad 	\Phi(\varphi,u):= \left(Q(u), u|_{\p\Omega}-\varphi \right),
		\end{equation}
		We want to show that the map $\Phi$ enjoys the mapping property \eqref{mapping prop in well-posed}. This can be seen since the map
		\begin{equation}\label{polynomial of determinant}
			C^{4,\alpha}(\overline{\Omega})\ni u \mapsto Q(u)=\det D^2u \in C^{2,\alpha}(\overline{\Omega}),
		\end{equation}
		where we used $\det D^2u$ is a polynomial in $\p_{ab}u$, for $a,b=1,2$ and $C^{2,\alpha}(\overline{\Omega})$ is an algebra. For the same reason, the mapping $\Phi$ is $C^\infty$ smooth in the Frech\'et sense.

		Now, using the equation \eqref{MA equation in well-posed zero BC}, we have
		$$
		\Phi(0,u_0)=(Q(u_0),u_0|_{\p\Omega})=(0,0),
		$$
		and the partial differential operator is given by 
		\begin{equation}
			\begin{split}
				\p_u \Phi(0,u_0)&: Y\to Z, \\
				\p_u \Phi (0,u_0)v  &= \big( \underbrace{(\det D^2u_0)}_{=F>0 \text{ in }\overline\Omega} \tr \big( (D^2 u_0 )^{-1}D^2 v\big),  v|_{\p \Omega} \big),
			\end{split}
		\end{equation}
		for any $v\in Y$, where we used the Jacobi formula \eqref{Jacobi formula} for $Q(u)$.

		Since $u_0$ is convex with $\det D^2 u_0>0$, it is known that $D^2u_0$ is a positive definite matrix-valued function. Then $\tr \big( (D^2 u_0 )^{-1}D^2 \cdot \big)$ is a second order elliptic operator of non-divergence form. 
		Thanks to $F>0$ in $\overline{\Omega}$ with $F\in C^{2,\alpha}(\overline{\Omega})$, and the ellipticity of $\tr \big( (D^2 u_0 )^{-1}D^2 \cdot \big)$, using the results \cite[Chapter 6]{gilbarg2015elliptic}, we want to show the map 
		\begin{equation}
			\begin{split}
				\p_u \Phi (0,u_0):  Y \to Z , \quad v\mapsto \big( F \tr \big( (D^2 u_0 )^{-1}D^2 v\big),  v|_{\p \Omega} \big) 
			\end{split}
		\end{equation}
		is a linear isomorphism. 
		On the one hand, it is easy to see that the function $(\det D^2u_0)\tr \big( (D^2 u_0 )^{-1}D^2 v\big) \in C^{2,\alpha}(\overline{\Omega})$ for any $v\in Y$.
		On the other hand, consider the Dirichlet problem 
		\begin{equation}\label{equ Schauder in well-posed}
			\begin{cases}
				F \tr \big( (D^2 u_0 )^{-1}D^2 v\big) = G &\text{ in }\Omega, \\
				v=\phi &\text{ on }\p\Omega,
			\end{cases}
		\end{equation}
		which is equivalent to 
		\begin{equation}\label{equ Schauder in well-posed 2}
			\begin{cases}
				\tr \big( (D^2 u_0 )^{-1}D^2 v\big) =\frac{G}{F} &\text{ in }\Omega, \\
				v=\phi &\text{ on }\p\Omega,
			\end{cases}
		\end{equation}
		since $F>0$ in $\overline{\Omega}$, where $v|_{\p\Omega}=\phi \in C^{4,\alpha}(\p \Omega)$. If $G\in C^{2,\alpha}(\overline{\Omega})$, then $\frac{G}{F}\in C^{2,\alpha}(\overline{\Omega})$ because of $F>0$ and $F\in C^{2,\alpha}(\overline{\Omega})$. Since the equation \eqref{equ Schauder in well-posed 2} has no zero order coefficients, by \cite[Chapter 6]{gilbarg2015elliptic}, there exists a unique solution $v\in C^{2,\alpha}(\overline{\Omega})$ to \eqref{equ Schauder in well-posed 2}. Moreover, applying the (global) Schauder estimate (see \cite[Chapter 6]{gilbarg2015elliptic}) again, one can see that the solution $v\in C^{4,\alpha}(\overline{\Omega})$ of \eqref{equ Schauder in well-posed}.\\

		Finally, via \ref{item 1 well-posed} and \ref{item 2 well-posed}, one can see that if $F\in C^\infty(\overline{\Omega})$ and $\varphi\in B_\delta(\p\Omega) $, then the corresponding solutions $u_0$ and $u$ to \eqref{MA equation in well-posed zero BC} and \eqref{MA equation in well-posed} are $C^\infty(\overline{\Omega})$-smooth functions (the integer $k\geq 2$ in \ref{item 1 well-posed} can be arbitrary in the argument).
		Next, using the implicit function theorem for Banach spaces (for instance, see \cite[Theorem 10.5]{renardy2006introduction}), there exists $\delta>0$ and a unique solution map
		\begin{equation}
			S: B_\delta(\p \Omega) \to C^{\infty}(\overline{\Omega}), \quad \varphi \mapsto S(\varphi),
		\end{equation}
		such that $S(0)=u_0$ and $\Phi(\varphi, S(\varphi))=0$, for all $\varphi \in B_\delta(\p \Omega)$, for any sufficiently small $\delta>0$. Let $u:=S(\varphi)$, since $S$ is Lipschitz continuous with $S(0)=u_0$, then there must hold 
		\begin{equation}
			\norm{u-u_0}_{C^{4,\alpha}(\overline{\Omega})} \leq C \norm{\varphi}_{C^{4,\alpha}(\p\Omega)}.
		\end{equation}
		Moreover, the solution map $S$ is $C^\infty$ in the Frech\'et sense, and since the normal derivative is a linear map, we have that \ref{item 3 well-posed} holds. This concludes the proof.
	\end{proof}

	Thanks to \eqref{solution map}, the (local) well-posedness ensures that one can develop the higher order linearization scheme for the Monge-Amp\`ere equation.

	\subsection{Higher order linearization}\label{sec: higher order linear}
	
	With the well-posedness of Proposition \ref{prop: well-posed} at hand, it is known that the equation \eqref{MA equation} admits a unique solution $u\in C^{4,\alpha}(\overline{\Omega})$ provided that $u|_{\p\Omega}\in B_\delta$ for sufficiently small $\delta>0$.
	Consider the boundary data $\varphi=\varphi_\eps$ in \eqref{MA equation} to be of the form 
	\begin{equation}\label{eps-Dirichlet data}
		\varphi_\eps=\eps_1 \phi_1 + \eps_2 \phi_2    \text{ on }\p\Omega,
	\end{equation}
	where $\eps=\LC \eps_1,\eps_2 \RC$ with sufficiently small parameters $\left|\eps_k\right|$, and $\phi_k$ can be any sufficiently smooth function on $\p\Omega$, for $k=1,2$,. With this parametrization at hand, the corresponding solution $u$ of \eqref{MA equation} can be expressed as $u_\eps(x)=u(x;\eps)$. In addition, let us write the solution $u_\eps$ of \eqref{MA equation} with the Dirichlet data \eqref{eps-Dirichlet data} of the form 
	\begin{equation}\label{u_eps}
		u_\eps(x)=u_0+\eps_1 v_1 +\eps_2 v_2 + \frac{1}{2}\eps_1\eps_2 w +\mathcal{O}(\eps^3)
	\end{equation}
	as an asymptotic expansion when $\eps\to 0$. The notation $\mathcal{O}(\eps^3)$ is the Bachmann–Landau notation. Notice that we have the well-known Jacobi formula~\eqref{Jacobi formula} for the determinant of matrices. In the following, we employ this formula to derive the corresponding linearized equations.

	\subsubsection{The first linearization}\label{subsec: first linearization}
	
	Since the unknown source $\varphi$ is independent of $\eps$, let us denote $u_\eps$ of the form \eqref{u_eps}, which is the solution to \eqref{MA equation} with the Dirichlet data \eqref{eps-Dirichlet data}. By differentiating \eqref{MA equation} with respect to $\eps_k$, and combining with the Jacobi formula \eqref{Jacobi formula}, we have 
	\begin{align}\label{Linearized MA equation in eps}
		\begin{cases}
			\LC\det D^2 u_{\eps}\RC \tr \big(\big( D^2 u_{\eps}\big)^{-1}D^2 \big( \p_{\eps_k} u_\eps\big) \big)=0 &\text{ in }\Omega,\\
			\p_{\eps_k} u_\eps =\phi  & \text{ on }\p \Omega,
		\end{cases}
	\end{align}
	for $j=1,2$.
	In particular, as $\eps=0$, there holds 
	\begin{align}\label{Linearized MA equation}
		\begin{cases}
			\big( \det D^2 u_0\big) \tr \big(\big( D^2 u_0\big)^{-1}D^2v^{(k)}\big)=0 &\text{ in }\Omega,\\
			v^{(k)}=\phi_k & \text{ on }\p \Omega.
		\end{cases}
	\end{align}
	where
	\begin{align*}
		u_0 =u(x;0),\quad \text{ and }\quad v^{(k)}=\left. \p_{\eps_k} \right|_{\eps=0}u_\eps ,
	\end{align*}
	for $k=1,2$.
	Moreover, it is easy to see that $u_0$ is the solution to \eqref{MA equation} with zero boundary data, i.e.,
	\begin{align}\label{MA equation zero boundary}
		\begin{cases}
			\det D^2 u_0 =F &\text{ in }\Omega, \\
			u_0=0 &\text{ on }\p \Omega.
		\end{cases}
	\end{align}
	Now, by plugging \eqref{MA equation zero boundary} into \eqref{Linearized MA equation} and using $F>0$ in $\Omega$, we obtain a \emph{linear} second order elliptic equation\footnote{Throughout this work, we used the Einstein summation convention that $A^{ab}B_{ab}=\sum_{i=1}^2 A_{ab}B_{ab}$ and $A^{ab}C_a =\sum_{a=1}^2 A^{ab}C_a$, for repeating indices. Any repeated indices will be regarded as a summation with respect to a certain index.} 
	\begin{align}\label{Linearized MA equation2}
		\begin{cases}
			u_0^{ab}\p_{ab}v^{(k)}=0 &\text{ in }\Omega,\\
			v^{(k)}=\phi_k & \text{ on }\p \Omega,
		\end{cases}
	\end{align}
	which is of the \emph{non-divergence form} for $k=1,2$, where 
	$$
	\LC  u_0^{ab}\RC_{1\leq a,b\leq 2} = \LC D^2u_0 \RC^{-1}.
	$$
	By knowing the Cauchy data of \eqref{MA equation}, the Cauchy data $\left\{v|_{\p\Omega}, \p_\nu v|_{\p \Omega}\right\}$ is also known. Then we try to solve the inverse boundary value problem for \eqref{Linearized MA equation2}, and our goal is to recover the matrix $u_0^{ab}$. By the positivity of $\varphi$, even with the boundary data $f=0$, we still have $\det D^2u_0=F>0$ in $\Omega$, which implies $\LC D^2u_0\RC$ is also an invertible matrix. If we can recover the inverse matrix $\LC u_0^{ab}\RC_{1\leq a,b\leq 2} $, then $F$ can be simply recovered by using $F=\det (D^2u_0)$ in $\Omega$.
	
	In what follows, let us use the notation 
	\begin{equation}\label{geometry from u_0}
		g^{ab}:= u_0^{ab} \text{ in }\Omega, \text{ for }a,b=1,2,
	\end{equation}
	then one can derive 
	\begin{equation}\label{comp first linearization}
		\begin{split}
			g^{ab}\p_{ab}v^{(k)}=0 &\iff \sqrt{\abs{g}}g^{ab}\p_{ab}v^{(k)} =0 \\
			&\iff \p_a \big( \sqrt{\abs{g}}g^{ab}\p_b v^{(k)} \big) - \p_a \big( \sqrt{\abs{g}}g^{ab} \big) \p_b v^{(k)} =0,
		\end{split}
	\end{equation}
	for $k=1,2$, where we used $\abs{g}=\det (g)=\det (g_{ab})>0$ in $\Omega$.
	Hence, we can rewrite \eqref{Linearized MA equation2} into 
	\begin{equation}\label{Linearized main equation}
		\begin{cases}
			\LC -\Delta_g  +X_g \cdot \nabla \RC   v^{(k)} =0 &\text{ in }\Omega, \\
			v^{(k)}=\phi_k&\text{ on }\p \Omega,
		\end{cases}
	\end{equation}
	for $k=1,2$, where 
	\begin{equation}\label{Laplace-Beltrami op.}
		\Delta_g =\frac{1}{\sqrt{\abs{g}}}\p_a  \big( \sqrt{\abs{g}}g^{ab} \p_b  \big)=g^{ab}\p_{ab}  - X_g^b \p_b
	\end{equation} 
	stands for the Laplace-Beltrami operator, and 
	\begin{equation}\label{vector field X^b}
		\begin{split}
			X_g& =\big( X_g^1(x), X_g^2(x)\big):\overline{\Omega}\to \R^2,\\
			X_g^b&:=\frac{1}{\sqrt{\abs{g}}}\sum_{a=1}^{2}\p_a \big( \sqrt{\abs{g}}g^{ab}\big), \quad \text{for}\quad b=1,2,
		\end{split}
	\end{equation}  
	which denotes the vector-valued coefficient of the first order term.
	We will adapt the above standing notations \eqref{Laplace-Beltrami op.} and \eqref{vector field X^b} in the rest of this paper.
	
	\subsubsection{The second linearization}\label{subsec: second linearization}
	
	Consider the Dirichlet data of the form \eqref{eps-Dirichlet data} in \eqref{MA equation}, and let us rewrite \eqref{Linearized MA equation in eps} as 
	\begin{equation}\label{Linearized MA equation in eps 2}
		\begin{cases}
			\tr \big(\big( D^2 u_{\eps}\big)^{-1}D^2 \big( \p_{\eps_k} u_\eps\big) \big)=0 &\text{ in }\Omega,\\
			\p_{\eps_k} u_\eps =\phi  & \text{ on }\p \Omega,
		\end{cases}
	\end{equation} 
	for $k=1,2$, where we used $\det D^2 u_{\eps}=F>0$ in $\Omega$. Differentiating \eqref{Linearized MA equation in eps 2}  with respect to $\eps_\ell$ (for $k\neq \ell$) again then direct computations yields that 
	\begin{equation}\label{2nd linear comp}
		\begin{split}
			0&= \p_{\eps_1} \big\{ \tr \big(\big( D^2 u_\eps\big)^{-1}D^2 \big( \p_{\eps_2} u_\eps\big) \big) \big\} \\
			%&= \underbrace{\big[	\big( \det D^2 u_\eps\big) \tr \big(\big( D^2 u_\eps\big)^{-1}D^2 \big( \p_{\eps_1} u_\eps\big) \big)\big]\big[	\big( \det D^2 u_\eps\big) \tr \big(\big( D^2 u_\eps\big)^{-1}D^2 \big( \p_{\eps_2}u_\eps\big) \big)\big]}_{=0 \text{ by using \eqref{Linearized MA equation in eps}}} \\
			%&=\big[ \tr \big( \p_{\eps_1} \big( D^2 u_\eps\big)^{-1} \big) D^2 \big( \p_{\eps_2} u_\eps \big)  + \tr \big( \big( D^2u_\eps\big)^{-1} \p_{\eps_1} D^2 \big( \p_{\eps_2} u_\eps \big) \big)  \big] \\
			&=  \big[ \tr \big( \p_{\eps_1} \big( D^2 u_\eps\big)^{-1} \big) D^2 \big( \p_{\eps_2} u_\eps \big)  + \tr \big( \big( D^2u_\eps\big)^{-1} \p_{\eps_1} D^2 \big( \p_{\eps_2} u_\eps \big) \big)  \big] \\
			&=-  \tr \big( \big( D^2 u_\eps\big)^{-1} D^2 \big(  \p_{\eps_1} u_{\eps}\big) \big( D^2 u_\eps\big)^{-1} D^2 \big( \p_{\eps_2} u_{\eps}\big)\big)  \\
			&\quad \, + \tr \big( \big( D^2u_\eps\big)^{-1}  D^2 \big( \p_{\eps_1\eps_2}^2  u_{\eps} \big) \big) ,
		\end{split}
	\end{equation}
	where we used the fact \eqref{derivative of matrix A}. Inserting $\eps=0$ into \eqref{2nd linear comp}, we can obtain the second linearized equation as 
	\begin{equation}\label{2nd Linearized MA equation}
		\begin{cases}
			\tr \big( \big( D^2u_0\big)^{-1}  D^2 w\big) = \tr  \big( \big( D^2 u_0\big)^{-1} D^2  v^{(1)} \big( D^2 u_0\big)^{-1} D^2v^{(2)}\big) &\text{ in }\Omega, \\
			w= 0&\text{ on }\p \Omega,
		\end{cases}
	\end{equation}
	where $w= \left. \p_{\eps_1\eps_2}^2 \right|_{\eps=0} u_{\eps} $, and we utilized $\det \LC D^2u_\eps \RC =F >0$ in $\Omega$. Similar to the first linearized equation \eqref{Linearized MA equation2}, we can rewrite \eqref{2nd Linearized MA equation} as 
	\begin{equation}\label{2nd Linearized MA equation 2}
		\begin{cases}
			\LC -\Delta_g +X_g \cdot \nabla \RC  w =\tr  \big( g^{-1} \big( D^2  v^{(1)} \big) g^{-1} \big( D^2v^{(2)} \big)\big) &\text{ in }\Omega, \\
			w=0 &\text{ on }\p \Omega,
		\end{cases}
	\end{equation}
	where $g$ and $X_g$ are given by \eqref{geometry from u_0} and \eqref{vector field X^b}, respectively. Let us emphasize again that $v_k$ is the solution to the first linearized equation \eqref{Linearized MA equation2} for $k=1,2$.

	\section{Boundary determination}\label{sec: boundary determination}
	
	In this section, we derive the boundary determination for the Hessian $D^2u_0$ on $\p\Omega$ from the DN map under the additional assumption that the source $F$ is known on the boundary.  Presumably, this assumption can be removed by considering standard-like boundary determination techniques for the first and second linearized equations.  %Here $u_0$ is the solution to \eqref{MA equation zero boundary}.
	%Particularly, we only need to prove the boundary determination for $u_0$. 
	
	%In other words, we want to show the following result.
	
	\begin{lemma}[Boundary determination]\label{lemma: boundary determination}
		Adopting all assumptions in Theorem \ref{theorem: uniqueness}, let $u_0\in C^{\infty}(\overline{\Omega})$ be the solution to \eqref{MA equation zero boundary}. Suppose that $F$ is known up to second order on the boundary, then $D^\beta u_0\big|_{\p \Omega}$ can be determined by $\Lambda_F(0)$, where $\beta=(\beta_1,\beta_2)\in (\N \cup \{0\})^2$ is a multi-index, with $\abs{\beta}=\beta_1+\beta_2\leq 3$. 
		
		In other words, let $F_1,F_2 \in C^{\infty}(\overline{\Omega})$ be positive sources, and suppose that $F_1$ and $F_2$ agree up to second order on the boundary, then 
		\begin{equation}\label{eq: same DN map zero}
			\Lambda_{F_1}(0)=\Lambda_{F_2}(0) \text{ on }\p \Omega
		\end{equation}
		implies $D^\beta u_0^{(1)}\big|_{\p \Omega}=D^\beta u_0^{(2)}\big|_{\p \Omega}$, for all $\abs{\beta}\leq 4$, where $u_0^{(j)}\in C^{\infty}(\overline{\Omega})$ is the solution to 
		\begin{align}\label{MA equation zero boundary j=1,2}
			\begin{cases}
				\det D^2 u_0^{(j)} =F_j &\text{ in }\Omega, \\
				u_0^{(j)}=0 &\text{ on }\p \Omega,
			\end{cases}
		\end{align}
		for $j=1,2$.
	\end{lemma}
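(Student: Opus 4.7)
The plan is to work in a boundary-adapted orthonormal frame and to determine the derivatives of $u_0^{(j)}$ on $\partial\Omega$ order-by-order, using three sources of information: the vanishing Dirichlet trace $u_0^{(j)}|_{\partial\Omega}=0$, the agreement of the DN maps (which by the definition of $\Lambda_{F_j}$ means $\partial_\nu u_0^{(1)} = \partial_\nu u_0^{(2)}$ on $\partial\Omega$), and the Monge--Amp\`ere equation $\det D^2 u_0^{(j)} = F_j$ together with the hypothesis that $F_1$ and $F_2$ agree up to second order on $\partial\Omega$.

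First, I would fix a smooth orthonormal frame $(\tau,\nu)$ along $\partial\Omega$ with $\nu$ the outer unit normal, and note that $u_0^{(j)}|_{\partial\Omega}=0$ implies $\tau\cdot\nabla u_0^{(j)}=0$ on $\partial\Omega$, so all purely tangential derivatives $\partial_\tau^k u_0^{(j)}|_{\partial\Omega}$ vanish. Combining this with the DN-map hypothesis and tangentially differentiating $\partial_\nu u_0^{(j)} = \Lambda_{F_j}(0)$ shows that every derivative of the form $\partial_\tau^k \partial_\nu u_0^{(j)}|_{\partial\Omega}$ is intrinsically determined and hence agrees for $j=1,2$. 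After the standard conversion between frame derivatives and the Euclidean multi-indices $D^\beta u_0^{(j)}$ (which introduces correction terms depending only on the geometry of $\partial\Omega$), this yields agreement of $D^\beta u_0^{(1)}=D^\beta u_0^{(2)}$ on $\partial\Omega$ for $|\beta|\le 1$.

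For the second-order Hessian, I would differentiate $\tau\cdot\nabla u_0^{(j)}=0$ tangentially along $\partial\Omega$, together with $\partial_\tau\nu = -\kappa\tau$ and $\partial_\tau\tau = -\kappa\nu$, to obtain
\begin{equation}
(D^2 u_0^{(j)})(\tau,\tau)=\kappa\,\partial_\nu u_0^{(j)},\qquad (D^2 u_0^{(j)})(\tau,\nu)=\partial_\tau\bigl(\partial_\nu u_0^{(j)}\bigr)\qquad\text{on }\partial\Omega,
\end{equation}
where $\kappa$ is the signed curvature of $\partial\Omega$. Since the determinant of $D^2 u_0^{(j)}$ in the orthonormal frame equals the Euclidean determinant, the Monge--Amp\`ere equation evaluated on $\partial\Omega$ gives
\begin{equation}
(D^2 u_0^{(j)})(\nu,\nu)=\frac{F_j + [(D^2 u_0^{(j)})(\tau,\nu)]^2}{(D^2 u_0^{(j)})(\tau,\tau)},
\end{equation}
in which the denominator is strictly positive by uniform convexity of $\Omega$ (so $\kappa>0$) and the Hopf boundary lemma applied to the convex function $u_0^{(j)}\le 0$ (so $\partial_\nu u_0^{(j)}>0$). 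Because $F_1=F_2$ and $\kappa$ agree on $\partial\Omega$ and the other inputs are already matched, all three frame components of the Hessian agree for $j=1,2$, hence so do $D^\beta u_0^{(j)}|_{\partial\Omega}$ for $|\beta|=2$.

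To pass to orders three and four, I would iterate: tangential differentiation of the second-order identities determines all frame components with at most one $\nu$-slot, while the first and second normal derivatives of the Monge--Amp\`ere equation, restricted to $\partial\Omega$, express $\partial_\nu^3 u_0^{(j)}$ and $\partial_\nu^4 u_0^{(j)}$ (up to tangential derivatives of lower-order frame components) in terms of quantities that are already known and of $\partial_\nu^k F_j|_{\partial\Omega}$ for $k=1,2$ --- which is precisely the hypothesis invoked. The main obstacle I expect is purely organizational rather than analytic: at each order the linear relation solved for the missing normal--normal component must be verified to be non-degenerate (again reducing to strict convexity and Hopf), and the frame components must be translated back to Euclidean multi-indices $D^\beta u_0^{(j)}$ using an explicit but somewhat tedious substitution involving $\kappa$ and its tangential derivatives. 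No new analytic ingredient beyond Proposition~\ref{prop: well-posed} and the above algebraic manipulations should be required.
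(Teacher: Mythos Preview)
Your proposal is correct and follows essentially the same strategy as the paper: determine tangential and mixed derivatives from the Dirichlet trace and the DN map, then solve the Monge--Amp\`ere equation (and its normal derivatives) on $\partial\Omega$ for the missing purely normal derivatives, iterating up to order four. The only cosmetic difference is that the paper works in local graph coordinates $x_2=\varphi(x_1)$ at each boundary point rather than in your moving orthonormal frame, and it justifies nondegeneracy of the denominator directly from positive definiteness of $D^2u_0$ on $\overline{\Omega}$ (via $\det D^2u_0=F\geq c_0>0$) rather than via the Hopf lemma; both arguments are valid and lead to the same conclusion.
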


	\begin{proof}
		We aim to show that for any point \( x_0 \in \partial \Omega \) and any multi-index \( \beta \) with \( |\beta| \leq 4 \), the derivative \( D^\beta u_0(x_0) \) can be determined using the data \( F|_{\partial \Omega} \) with its boundary derivatives up to order three, and the Dirichlet-to-Neumann map \( \Lambda_F(0) \).
		
		Without loss of generality, we assume \( x_0 = 0 \). Near \( x_0 \), we parameterize the boundary \( \partial \Omega \) locally as the graph \( x_2 = \varphi(x_1) \), where \( \varphi \) is a convex function defined for \( x_1 \in (-\delta, \delta) \), for some \( \delta > 0 \). We further assume \( \varphi(0)=\varphi'(0) = 0 \), which can be achieved via rotation and translation of coordinates.
		
		Since \( \Omega \) is uniformly convex and the source function satisfies \( F \geq c_0 > 0 \), the solution \( u_0 \in C^{\infty}(\overline{\Omega}) \) is strictly convex by Proposition~\ref{prop: well-posed}. In particular, the Hessian matrix \( D^2 u_0 \) is positive definite in $\Omega$, which implies:
		\begin{equation}\label{eq:positive_11}
			\partial_{11} u_0 > 0\text{ in } \overline{\Omega},
		\end{equation}
		by the smoothness of $u_0$.
		Meanwhile, since $\p_1$ is the tangential derivative at $0$ on $\p\Omega$, with the given information of $u_0|_{\p\Omega}$,

		Since $u_0 = 0$ on $\partial \Omega$, we have $u_0(x_1, \varphi(x_1)) = 0$ for all $x_1 \in (-\delta, \delta)$, as well as its tangential derivatives. Thus, differentiating this identity twice with respect to $x_1$, we can compute
		\begin{equation}\label{eq:boundary_identity}
			\begin{aligned}
				0 &= \underbrace{\frac{d^2}{dx_1^2}\Big|_{x_1=0} u_0(x_1, \varphi(x_1))}_{\text{tangential derivative}} \\
				&= \big(\partial_{11} u_0\big)(0) + 2\big(\partial_{12} u_0\big)(0) \varphi'(0) + \big(\partial_{22} u_0\big)(0) \big(\varphi'(0)\big)^2 + 
				\big(\partial_2 u_0\big)(0) \varphi''(0).
			\end{aligned}
		\end{equation}
		Using $\varphi'(0) = 0$, this simplifies to
		\begin{equation}\label{eq:u11_u2_relation}
			\big(\partial_{11} u_0\big)(0) + \big(\partial_2 u_0\big)(0) \varphi''(0) = 0.
		\end{equation}
		Hence,
		\begin{equation}\label{eq:u11_known}
			\big(\partial_{11} u_0\big)(0) = -\big(\partial_2 u_0\big)(0) \varphi''(0).
		\end{equation}
		The right-hand side is known: \( \partial_2 u_0(0) \) is obtained from the DN map $\Lambda_F$ at $x_0=0\in \p\Omega$, and $\varphi''(0)$ is the curvature of the boundary at $x_0 = 0$, which is computable from the parametrization. Thus, $\big(\partial_{11} u_0\big)(0)$ is determined.

		Next, it is known that $\Lambda_F$ provides $\partial_\nu u_0$ on $\partial \Omega$ and $u_0=0$ is known on the boundary, $\p_2 u_0$ is known on the boundary. Consequently, $\partial_{12} u_0(0)$ is also determined.
		Moreover, the Monge–Ampère equation \eqref{MA equation zero boundary} gives:
		\begin{equation}
			\big(\partial_{11} u_0\big)(0) \big(\partial_{22} u_0\big)(0) - \left( \partial_{12} u_0 \right)^2(0) = F(0),
		\end{equation}
		which yields
		\begin{equation}\label{eq:u22_known}
			\big(\partial_{22} u_0\big)(0) = \frac{F(0) + \left( \partial_{12} u_0 \right)^2(0)}{\big(\partial_{11} u_0\big)(0)}.
		\end{equation}
		This is valid due to the positivity of $\big(\partial_{11} u_0\big)(0)$. Therefore, all second-order derivatives $\big(\partial_{ij} u_0\big)(0)$ with $ i,j \in \{1,2\}$ are now determined. Since $x_0 \in \partial \Omega$ was arbitrary, this argument applies uniformly along $\partial \Omega$, and we conclude that:
		\[
		\left. \partial_{11} u_0 \right|_{\partial \Omega},\quad \left. \partial_{12} u_0 \right|_{\partial \Omega}, \quad \left. \partial_{22} u_0 \right|_{\partial \Omega}
		\]
		are determined.
		
		We now proceed to third-order derivatives. Since \( \partial_{22} u_0|_{\partial \Omega} \) is known, we may take a tangential derivative along \( \partial \Omega \) to recover \( \partial_{122} u_0(0) \). Meanwhile, differentiating the Monge–Ampère equation with respect to \( x_2 \) yields:
		\begin{equation}\label{eq:ma_diff}
			\partial_{11} u_0(0)\partial_{222} u_0(0) + \partial_{211} u_0(0) \partial_{22} u_0(0) - 2 \partial_{12} u_0(0) \partial_{122} u_0(0) = \partial_2 F(0).
		\end{equation}
		Here, \( \partial_2 F(0) =- \partial_\nu F(0) \) is known by assumption, and all terms except \( \partial_{222} u_0(0) \) are already determined. Solving for \( \partial_{222} u_0(0) \), we obtain:
		\begin{equation}\label{eq:u222_known}
			\partial_{222} u_0(0) = \frac{\partial_2 F(0) - \partial_{211} u_0(0) \partial_{22} u_0(0) + 2\partial_{12} u_0(0) \partial_{122} u_0(0)}{\partial_{11} u_0(0)}.
		\end{equation}
		Again, this is valid due to \eqref{eq:positive_11}. As a result, all third-order derivatives $ \partial_{abc} u_0(0)$, with $a,b,c \in \{1,2\}$, are now known. 
		
		Continuing this process, we can determine $u_0$ up to fourth order on the boundary by taking more tangential and normal derivatives from the preceding identities. 
		This completes the proof that all fourth-order derivatives of $u_0$ at any boundary point $x_0 \in \partial \Omega$ can be determined from the data $F|_{\partial \Omega}$ with its higher order derivatives on $\p\Omega$, and $\Lambda_F$.
	\end{proof}

	\section{Unique determination of the metric}\label{sec: unique of diffeo}
	
	Recall that our goal is to recover the Hessian of $u_0$ in $\Omega$, where $u_0$ is the solution to~\eqref{MA equation zero boundary}. Once the Hessian $D^2 u_0$ is determined, the source function $F$ is also fully determined. Let us adopt the notation introduced in~\eqref{geometry from u_0}.
	
	Thanks to Theorem~\ref{lemma: boundary determination}, we already know the boundary values of the metric $g = D^2 u_0$, i.e., $g|_{\partial \Omega}$, which in turn implies that the conormal derivative $\partial_{\nu_g} v_\phi$ (associated with the operator $-\Delta_g$) is known.
	
	Using the first linearized equation of the Monge-Amp\`ere equation (see Section~\ref{sec: higher order linear}), we define the DN map $\Lambda'_g$ corresponding to the boundary value problem
	\begin{equation}\label{Linearized main equation in DN}
		\begin{cases}
			\left(-\Delta_g + X_g \cdot \nabla\right)v = 0 & \text{in } \Omega, \\
			v = \phi & \text{on } \partial \Omega,
		\end{cases}
	\end{equation}
	as
	\begin{equation}\label{DN_linearized}
		\Lambda'_g: C^{\infty}(\partial \Omega) \to C^{\infty}(\partial \Omega), \quad \phi \mapsto \partial_{\nu_g} v_\phi,
	\end{equation}
	where $v_\phi \in C^{\infty}(\overline{\Omega})$ denotes the solution to~\eqref{Linearized main equation in DN}. 
	
	To proceed, we prove the following result, which shows that the DN map of the fully nonlinear Monge-Amp\`ere equation determines the DN map of its linearized counterpart~\eqref{Linearized main equation in DN}.

	\begin{lemma}\label{lemma: nonlinear to linear DN}
		Adopting all assumptions in Theorem \ref{theorem: uniqueness}.
		The DN map $\Lambda_F$ of \eqref{MA equation} (see the definition \eqref{DN map}) determines the DN map $\Lambda'_g$ of \eqref{DN_linearized}, where $g=D^2u_0$ is the Hessian of $u_0$ ($u_0$ is the solution to \eqref{MA equation zero boundary}).
	\end{lemma}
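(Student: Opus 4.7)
The plan is to recover the linearized DN map by differentiating the nonlinear DN map along a one-parameter family of boundary data at the zero boundary value. More precisely, for any $\phi \in C^\infty(\partial\Omega)$, I would take boundary data $\varphi_\epsilon = \epsilon \phi$ for $\epsilon$ small enough so that $\varphi_\epsilon \in B_\delta(\partial\Omega)$, and set $u_\epsilon := S(\varphi_\epsilon)$ where $S$ is the solution map from Proposition~\ref{prop: well-posed}. Differentiating the identity $\det D^2 u_\epsilon = F$ in $\epsilon$ at $\epsilon = 0$ and using the Jacobi formula as in Section~\ref{subsec: first linearization}, one sees that $v_\phi := \partial_\epsilon|_{\epsilon = 0} u_\epsilon$ solves the first linearized equation $g^{ab}\partial_{ab} v_\phi = 0$ in $\Omega$ with $v_\phi = \phi$ on $\partial\Omega$, which by \eqref{comp first linearization}--\eqref{Linearized main equation} is exactly \eqref{Linearized main equation in DN}.

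Next, since $\Lambda$ is Fréchet differentiable by Proposition~\ref{prop: well-posed}~\ref{item 3 well-posed}, the limit
\begin{equation}
\lim_{\epsilon \to 0} \frac{\Lambda_F(\epsilon\phi) - \Lambda_F(0)}{\epsilon} = d\Lambda_F|_{0}(\phi) \text{ on } \partial\Omega
\end{equation}
exists and is determined by the knowledge of $\Lambda_F$ on $B_\delta(\partial\Omega)$. On the other hand, interchanging the $\epsilon$-derivative with the normal trace on $\partial\Omega$ (justified by the $C^\infty$ Fréchet dependence of $u_\epsilon$ on $\epsilon$), this limit equals $\partial_\nu v_\phi|_{\partial\Omega}$, the Euclidean normal derivative of the linearized solution. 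Hence $\Lambda_F$ determines $\phi \mapsto \partial_\nu v_\phi|_{\partial\Omega}$.

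Finally, it remains to convert the Euclidean normal derivative to the conormal derivative $\partial_{\nu_g} v_\phi$ appearing in \eqref{DN_linearized}. Since $\partial_{\nu_g} = g^{ab}\nu_a \partial_b$ is a first-order operator on $\partial\Omega$ whose coefficients depend only on $g|_{\partial\Omega}$, and since $v_\phi|_{\partial\Omega} = \phi$ is prescribed so the tangential derivatives of $v_\phi$ on $\partial\Omega$ are known, it suffices to know $g|_{\partial\Omega} = D^2 u_0|_{\partial\Omega}$. But this was recovered in Lemma~\ref{lemma: boundary determination} from the DN map $\Lambda_F$ together with the second-order boundary data of $F$. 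Combining this with the previous step, $\Lambda'_g(\phi) = \partial_{\nu_g} v_\phi$ is determined by $\Lambda_F$, as claimed.

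There is no serious analytic obstacle here; the argument is essentially a first-order linearization combined with boundary determination. The only mild subtlety is the need to pass from the Euclidean normal derivative (which is what the differentiated nonlinear DN map produces) to the metric conormal derivative (which is how $\Lambda'_g$ is defined), and this is cleanly handled by the already-proved boundary determination of $g$.
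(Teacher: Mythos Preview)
Your proposal is correct and follows essentially the same approach as the paper: first linearization of $\Lambda_F$ at $\varphi=0$ to obtain $\partial_\nu v_\phi|_{\partial\Omega}$, followed by the boundary determination of $g|_{\partial\Omega}=D^2u_0|_{\partial\Omega}$ from Lemma~\ref{lemma: boundary determination} to pass to the conormal derivative $\partial_{\nu_g} v_\phi$. The paper's proof is terser but invokes exactly the same two ingredients (Proposition~\ref{prop: well-posed} for the Fr\'echet differentiability and Lemma~\ref{lemma: boundary determination} for the boundary values of $g$); your version simply spells out the differentiation step and the Euclidean-to-conormal conversion more explicitly.
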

	
	\begin{proof}
		The proof relies on boundary determination. By Lemma~\ref{lemma: boundary determination}, the DN map $\Lambda_F$ determines $D^2 u_0|_{\p \Omega}$. Moreover, we observe that 
		\begin{equation}\label{boundary determination of solution to MA}
			\nabla u|_{\p\Omega} \ \text{is determined, where $u$ is the solution to \eqref{MA equation} with $u|_{\p\Omega} \in B_\delta(\p\Omega)$.}
		\end{equation}
		In addition, Proposition~\ref{prop: well-posed} implies that $\nabla v|_{\p\Omega}$ is also determined.  
		Hence, we obtain the complete information of the DN map 
		\[
		\Lambda'_g: \phi \mapsto \left. \p_{\nu_g} v_{\phi} \right|_{\p\Omega},
		\]
		which establishes the claim.  
	\end{proof}

	From this point onward, our goal is to solve the inverse problem of recovering the metric $g$ and the vector field $X_g$ from the DN map $\Lambda'_g$ associated with the linearized equation~\eqref{DN_linearized}. To facilitate this analysis, we introduce the following notation.
	
	Let $J = (J^1, J^2): \overline{\Omega} \to \mathbb{R}^2 $ be a $C^1$ diffeomorphism. Let $g = (g_{ab})_{1 \leq a, b \leq 2}$ denote a $2 \times 2$ matrix-valued function representing a Riemannian metric on \( \Omega \) (not necessarily the Hessian matrix $D^2 u_0$, and let $X = (X^1, X^2)$ be a smooth vector field. Under the coordinate transformation $J$, the pullbacks of the metric, vector field, and function $v$ are defined as follows:
	\begin{equation}\label{formula in change of coordinates}
		\begin{aligned}
			J^\ast g &= (\nabla J)^T (g\circ J) \nabla J , \\
			J^\ast X&= (J^{-1})_\ast X=\nabla (J^{-1})(X\circ J), \\
			J^\ast v &= v \circ J,
		\end{aligned}
	\end{equation}
	where $ \nabla J$ denotes the Jacobian matrix of $J $, $(\nabla J)^T$ its transpose, and $(J^{-1})_\ast$ the pushforward by the inverse of $J$. With these notations in place, we now proceed to analyze the first linearized equation.

	\subsection{Determination up to isometry and conformal factor}
	
	Recalling that the first linearized equation of the Monge-Amp\`ere equation is of the form \eqref{Linearized main equation}, then we have the next result.

	\begin{lemma}[Simultaneous recovery]\label{lemma: first linearization det}
		Let $\Omega \subset \R^2$ be a bounded open simply connected domain with $C^\infty$-smooth boundary $\p\Omega$. Let $\Lambda'_{g_j,X_j}$ be the DN map of
		\begin{equation}\label{first linearized equation in pf}
			\begin{cases}
				\big( -\Delta_{g_j}+ X_{j}\cdot \nabla \big) v_j=0  &\text{ in }\Omega,\\
				v_j=\phi &\text{ on }\p \Omega,
			\end{cases}
		\end{equation}
		where $X_j$ is a vector field, for $j=1,2$. Suppose that 
		\begin{equation}
			\Lambda'_{g_1,X_1}(\phi)=\Lambda'_{g_2,X_2} (\phi), \text{ for any }\phi \in C^{\infty}(\p \Omega),
		\end{equation}
		then there exists a diffeomorphism $J: \overline{\Omega}\to \overline{\Omega}$ with $J|_{\p \Omega}=\mathrm{Id}$, and a conformal factor $c>0$ with $c|_{\p\Omega}=1$ such that 
		\begin{enumerate}[(i)]
			\item \label{item 1 in first linearization}
			\begin{equation}\label{eq: unique up to iso in first linearized}
				\begin{split}
					g_1 =cJ^\ast g_2  \quad \text{and} \quad X_{1}=c^{-1}J^\ast X_{2} \quad \text{in}\quad \Omega.
				\end{split}
			\end{equation}

			\item \label{item 2 in first linearization} Moreover, if $v_j$, $j=1,2$, are solutions to \eqref{first linearized equation in pf}, there holds
			\[
			v_1=J^*v_2.
			\]
		\end{enumerate}
		Here, all notations in \eqref{eq: unique up to iso in first linearized} are given in \eqref{formula in change of coordinates}.
	\end{lemma}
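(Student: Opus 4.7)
The plan is to reduce Lemma~\ref{lemma: first linearization det} to a known 2D anisotropic Calder\'on uniqueness result for second order elliptic operators with first-order perturbations, and then track the gauge carefully. Since $\Omega\subset\R^{2}$ is simply connected, each metric $g_{j}$ admits global isothermal coordinates, so the operator $-\Delta_{g_{j}}+X_{j}\cdot\nabla$ can be regarded as acting on a simply connected Riemann surface. First I would put the operator in a form that matches the standard framework: multiplying $-\Delta_{g_{j}}+X_{j}\cdot\nabla$ by $\sqrt{|g_{j}|}$ produces the divergence-form principal part $-\partial_{a}\big(\sqrt{|g_{j}|}g_{j}^{ab}\partial_{b}\big)$ plus a drift $\sqrt{|g_{j}|}X_{j}\cdot\nabla$; alternatively, a Hodge-type substitution $v=e^{\eta_{j}}w$ (which is available on simply connected $\Omega$) can be used to absorb the gradient part of $X_{j}$ into the leading operator. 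Either reformulation places us within the reach of existing 2D Calder\'on-type uniqueness theorems for drift/magnetic Schr\"odinger equations.

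Next, I would invoke such a known 2D uniqueness result (in the spirit of Astala--Lassas--P\"aiv\"arinta for anisotropic conductivities, and its extensions to first-order perturbations by Guillarmou--Tzou, Imanuvilov--Uhlmann--Yamamoto, and related works). Applied to our equality of DN maps $\Lambda'_{g_{1},X_{1}}=\Lambda'_{g_{2},X_{2}}$, this supplies a $C^{\infty}$ diffeomorphism $J:\overline{\Omega}\to\overline{\Omega}$ with $J|_{\p\Omega}=\mathrm{Id}$ and a smooth conformal factor $c>0$ with $c|_{\p\Omega}=1$ such that $g_{1}=cJ^{*}g_{2}$, together with the corresponding gauge identification for the first-order term.

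To identify the precise drift relation, I would exploit the 2D conformal covariance $\Delta_{cg}=c^{-1}\Delta_{g}$, which gives
\[
-\Delta_{cg}+c^{-1}X\cdot\nabla \;=\; c^{-1}\big(-\Delta_{g}+X\cdot\nabla\big),
\]
so the operator is invariant (up to a nonvanishing multiplicative factor that does not affect null solutions or the DN map) under the joint replacement $(g,X)\mapsto(cg,c^{-1}X)$. Combined with the evident diffeomorphism covariance $J^{*}\big(-\Delta_{g_{2}}+X_{2}\cdot\nabla\big)J^{*-1}=-\Delta_{J^{*}g_{2}}+(J^{*}X_{2})\cdot\nabla$, uniqueness of the gauge up to this combined conformal--diffeomorphism action yields precisely the claimed identities $g_{1}=cJ^{*}g_{2}$ and $X_{1}=c^{-1}J^{*}X_{2}$ in $\Omega$.

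For part~\ref{item 2 in first linearization}, I would check that $J^{*}v_{2}=v_{2}\circ J$ satisfies the same boundary value problem as $v_{1}$ and then conclude by well-posedness. The boundary trace is $(v_{2}\circ J)|_{\p\Omega}=v_{2}|_{\p\Omega}=\phi=v_{1}|_{\p\Omega}$, since $J|_{\p\Omega}=\mathrm{Id}$. Using the covariances just established,
\[
\big(-\Delta_{g_{1}}+X_{1}\cdot\nabla\big)(J^{*}v_{2})\;=\;c^{-1}J^{*}\big(-\Delta_{g_{2}}+X_{2}\cdot\nabla\big)v_{2}\;=\;0,
\]
and the uniqueness of solutions to the elliptic problem \eqref{first linearized equation in pf} then forces $v_{1}=J^{*}v_{2}$. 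The technical crux I expect to be the most delicate is Step~2: locating the precise formulation of the 2D anisotropic Calder\'on result that applies to the drift-Laplacian $-\Delta_{g}+X\cdot\nabla$ (as opposed to the divergence-form conductivity equation or the symmetric magnetic Schr\"odinger operator more commonly treated in the literature), and verifying that the gauge it produces is exactly the conformal--diffeomorphism gauge described above, with both $c$ and $J$ normalized at the boundary.
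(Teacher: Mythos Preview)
Your overall strategy matches the paper's: quote a 2D anisotropic Calder\'on-type result to obtain $J$ and $c$ with $g_1=cJ^*g_2$, then use conformal and diffeomorphism covariance to pin down the drift, and finally deduce $v_1=J^*v_2$ by well-posedness. However, the paper's execution differs from your proposal in a way that matters, and there is one gap in your argument.

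First, the paper does \emph{not} try to find a single black-box result delivering both the metric and the drift identification at once; instead it proceeds in two separate citations. Step one invokes \cite[Theorem~1.1]{IUY2012} to obtain $J$ and $c$ with $g_1=cJ^*g_2$, and crucially extracts from the proof of that theorem the additional fact $\nabla J|_{\p\Omega}=I_{2\times2}$. Step two then pulls $v_2$ back by $J$ to obtain a solution $\wt v_2$ of $-\Delta_{g_1}\wt v_2+c^{-1}(J^*X_2)\cdot\nabla\wt v_2=0$, observes that the DN maps of this equation and the $(g_1,X_1)$ equation now agree (this is where $\nabla J|_{\p\Omega}=I_{2\times2}$ is used, to match the Neumann data), and then applies a \emph{separate} drift-uniqueness result with fixed metric $g_1$ (\cite[Lemma~4.2]{nurminen2023inverse}, based on \cite{tzou2017reflection,guillarmou2011identification}) to conclude $X_1=c^{-1}J^*X_2$. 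Your proposal folds both steps into a hoped-for single citation and explicitly flags this as the ``technical crux''; the paper's two-step decomposition is exactly how that crux is resolved.

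Second, and this is the genuine gap: in your part~(ii) argument you check only that $(J^*v_2)|_{\p\Omega}=\phi$ and that $J^*v_2$ solves the $(g_1,X_1)$ equation. But in the paper's logic, before the drift identification is established one needs to know that the \emph{Neumann} data of $J^*v_2$ agree with those of $v_1$; for that, $J|_{\p\Omega}=\mathrm{Id}$ alone is insufficient, and the extra information $\nabla J|_{\p\Omega}=I_{2\times2}$ is essential. You do not mention this, and without it the reduction to a fixed-metric drift problem does not go through.
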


	\begin{remark}
		Note that the vector field $X_j$ in the above lemma could be independent of the metric $g_j$, for $j=1,2$. Hence, we do not use the notation $X_{g_j}$ given by \eqref{vector field X^b} to denote the vector field in \eqref{first linearized equation in pf} for $j=1,2$.
	\end{remark}

	\begin{proof}[Proof of Lemma \ref{lemma: first linearization det}]
		For \ref{item 1 in first linearization}, by \cite[Theorem 1.1]{IUY2012}, there is a conformal mapping $J$ from $\Omega$ to itself with $J|_{\p \Omega}=\mathrm{Id}$ such that 
		\[
		g_1=cJ^\ast g_2 \text{ in } \Omega,
		\]
		for some smooth conformal factor $c>0$ with $c|_{\p\Omega}=1$. Inspection of the proof of the theorem, see \cite[Eq. (6.6)]{IUY2012}, also shows that $\nabla J|_{\p \Omega}=I_{2\times 2}$ (the $2\times 2$ identity matrix). Let $v_2$ be a solution to 
		\[
		-\Delta_{g_2}v_2 + X_{2}\cdot \nabla v_2=0  \text{ in }\Omega.
		\]
		Let us denote 
		\[
		\wt v_2=v_2\circ J,
		\]
		then $\wt v_2$ solves
		\begin{equation}
			\begin{split}
				\Delta_{g_1}\widetilde v_2&=\Delta_{cJ^*g_2}\widetilde v_2=c^{-1}J^*\Delta_{g_2} v_2\\
				&=c^{-1}J^*(X_{2}\cdot \nabla v_2)=c^{-1}(J^*X_{2})\nabla (J^*v_2)\\
				&=c^{-1}(J^*X_{2})\cdot \nabla \widetilde v_2 \quad \text{in}\quad \Omega.
			\end{split}
		\end{equation}
		Since $J|_{\p \Omega}=\textrm{Id}$ and $\nabla J|_{\p \Omega}=\mathrm{I}_{2\times 2}$, we also have that 
		\[
		v_1|_{\p \Omega}=\widetilde v_2|_{\p \Omega} \quad \text{and}\quad  \p_\nu v_1|_{\p \Omega}=\p_\nu\widetilde v_2|_{\p \Omega}.
		\]
		Here we also used $\Lambda_1'=\Lambda_2'$. Thus, the DN maps of the equations 
		\[
		-\Delta_{g_1}v_1 + X_{1}\cdot \nabla v_1=0 \text{ in }\Omega  
		\]
		and
		\[
		-\Delta_{g_1}\wt v_2+c^{-1}(J^*X_{2})\cdot \nabla \wt v_2 =0 \text{ in }\Omega
		\]
		agree. Since $\Omega$ is assumed to be simply connected, by \cite[Lemma 4.2]{nurminen2023inverse} (based on \cite{tzou2017reflection} or alternatively \cite{guillarmou2011identification}), we have
		\[
		c^{-1}(J^*X_{2})=X_{1} \text{ in }\Omega.
		\]
		Thus, we have \eqref{eq: unique up to iso in first linearized}. 
		
		For \ref{item 2 in first linearization}, since $v_1$ and $\widetilde v_2$ now satisfy the same elliptic equation (without zeroth order term), we have $v_1=J^*v_2$.
	\end{proof}
	We mention that a more general version of Lemma \ref{lemma: first linearization det} on Riemannian surfaces, based on the proof in \cite{CLT24}, will appear in a work by the first-mentioned author.

	\subsection{Determination of the isometry via the Christoffel symbol}\label{sec: isometry Christoffel}
	In this section, we want to claim that $J=\mathrm{Id}$ in $\overline{\Omega}$ by using a coupled system of equations.
	Thanks to Lemma \ref{lemma: first linearization det}, we already know that there is an isometry $J:\overline{\Omega}\to \overline{\Omega}$ with $J|_{\p\Omega}=\mathrm{Id}$, which relates the metrics $g_1$ and $g_2$ via \eqref{eq: unique up to iso in first linearized}. Therefore, one can apply the assertion \eqref{eq: unique up to iso in first linearized} in Lemma \ref{lemma: first linearization det}, which shows that 
	\begin{equation}\label{eq:condition}
		g_1 =cJ^\ast g_2  \quad \text{and} \quad X_{g_1}=c^{-1}J^\ast X_{g_2} \quad \text{in}\quad \Omega,
	\end{equation}
	where $X_{g_j}$ is the vector field given by \eqref{vector field X^b} with components $X_{g_j}^i$, for $i,j=1,2$. 
	In addition, 
	let $\Gamma(g)_{kl}^i$ be the Christoffel symbol associated with the metric $g$, which is given by 
	\begin{equation}\label{Christoffel symbol}
		\Gamma(g)_{kl}^i := \frac{1}{2}g^{im}\Big( \frac{\p g_{mk}}{\p x^l}+ \frac{\p g_{ml}}{\p x^k} -\frac{\p g_{kl}}{\p x^m}\Big), \quad \text{for }1\leq i,k,l\leq 2,
	\end{equation}
	for a given metric $g$. We also note that
	\begin{equation}\label{eq:contracted_christoffel}
		\begin{split}
			X_{g}^i=-g^{kl}\Gamma(g)_{kl}^i,
		\end{split}
	\end{equation}
	by standard formulas in Riemannian geometry, where $X_g$ is given by \eqref{vector field X^b}.

	%    We can then derive the next lemma.
	
	% \begin{lemma}\label{lemma: vector field and Christoffel symbol}
		% 	Given a metric $g=(g_{ab})_{1\leq a,b\leq 2}$, let $X_g^i$ be the vector field given by \eqref{vector field X^b}, then we have 	\begin{equation}
			% 		\begin{split}
				% 			X_{g}^i=-g^{kl}\Gamma(g)_{kl}^i,
				% 		\end{split}
			% 	\end{equation}
		% 	for $i=1,2$, where $\Gamma(g)_{kl}^i$ is defined by \eqref{Christoffel symbol}.
		% \end{lemma}
	
	% \begin{proof}
		% 	The proof is based on a direct computations together with $g^{ii}=\frac{1}{\abs{g}}g_{kk}$ and $g^{ik}=-\frac{1}{\abs{g}}g_{ki}$, for $i\neq k \in \{1,2\}$.
		% \end{proof}

	Notice that the Christoffel symbols transform under conformal scaling
	\[
	g\mapsto  \widetilde{g} = e^{2\sigma} g,
	\]
	is 
	\[
	\Gamma(\wt g)^{i}_{kl} = \Gamma(g)^{i}_{kl} + \delta^{i}_{k} \partial_{l} \sigma + \delta^{i}_{l} \partial_{k} \sigma - g_{kl} \partial^{i} \sigma.
	\]
	%(see e.g. Deepseek generated file conformal scaling)
	Thus, writing $c=e^{2\sigma}$, or $\sigma=\frac{1}{2}\log c$, where $c>0$ is the conformal factor, we have 
	\begin{equation}\label{change of variable in Christoffel}
		\begin{split}
			\Gamma^i_{kl}(g_1)&=\Gamma^i_{kl}(cJ^\ast g_2)\\
			&=\Gamma^{i}_{kl}(J^\ast g_2) + \frac{1}{2} \big( \delta^{i}_{k} \partial_{l} \log c + \delta^{i}_{l} \partial_{k} \log c - (g_1)_{kl} g_1^{ij}\partial_{j} \log c \big).
		\end{split}
	\end{equation}	
	Here, $\delta^i_k=\begin{cases}
		1 &\text{for }i=k\\
		0 &\text{otherwise}
	\end{cases}$ denotes the Kronecker delta.

	Using Lemma \ref{lemma: first linearization det}, we can prove Theorem \ref{theorem: J=Id in intro}. For readers' convenience, let us recall Theorem \ref{theorem: J=Id in intro} as follows. 
	
	\begin{theorem}\label{theorem: J=Id}
		Let $\Omega \subset \R^2$ be a bounded open simply connected domain with $C^\infty$-smooth boundary $\p\Omega$.  Let $\Lambda'_{g_j}$ be the DN map of 
		\begin{equation}\label{non-divergence elliptic equ}
			\begin{cases}
				g^{ab}_j\p_{ab} v_j =0 &\text{ in }\Omega,\\
				v_j = \phi &\text{ on }\p \Omega,
			\end{cases}
		\end{equation}
		for $j=1,2$. Suppose that 
		\begin{equation}
			\Lambda'_{g_1}(\phi)=\Lambda'_{g_2}(\phi) \quad \text{for any}\quad \phi \in C^\infty(\p \Omega),
		\end{equation}
		then there exists a conformal factor $c>0$ with $c|_{\p \Omega}=1$, such that
		\begin{equation}\label{g_1 = c g_2}
			g_1=cg_2 \text{ in }\Omega.
		\end{equation}
	\end{theorem}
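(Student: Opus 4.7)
The plan is to combine the three conclusions of Lemma \ref{lemma: first linearization det} with a particularly simple choice of test solutions---the coordinate functions themselves---to force the resulting diffeomorphism to be the identity. First I would rewrite each equation $g_j^{ab}\p_{ab}v_j = 0$ in the form $(-\Delta_{g_j} + X_{g_j}\cdot \nabla)v_j = 0$ via \eqref{comp first linearization}, so that Lemma \ref{lemma: first linearization det} applies and produces a diffeomorphism $J: \ol\Omega \to \ol\Omega$ with $J|_{\p\Omega} = \mathrm{Id}$, a conformal factor $c > 0$ with $c|_{\p\Omega} = 1$, the relation $g_1 = cJ^*g_2$, and the rigidity identity $v_1 = v_2 \circ J$ whenever $v_1$ and $v_2$ solve the respective equations with the same Dirichlet data.

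The central step is to test this rigidity against the coordinate functions. Fix $p \in \{1,2\}$ and set $v_2(y) := y^p$. Since $\p_{ij}v_2 \equiv 0$, one has $g_2^{ab}\p_{ab}v_2 = 0$ in $\Omega$, so $v_2$ is a genuine solution of the non-divergence equation for $g_2$ with Dirichlet data $\phi_p := y^p|_{\p\Omega} \in C^\infty(\p\Omega)$. Let $v_1^{(p)}$ be the corresponding solution of $g_1^{ab}\p_{ab}v_1^{(p)} = 0$ with the same boundary data. Because $J|_{\p\Omega} = \mathrm{Id}$ ensures that the two prescribed boundary values match, Lemma \ref{lemma: first linearization det} yields $v_1^{(p)} = v_2 \circ J = J^p$, so the components of $J$ themselves satisfy
\[
g_1^{ab}\p_{ab}J^p = 0 \text{ in } \Omega, \qquad J^p\big|_{\p\Omega} = x^p\big|_{\p\Omega}, \qquad p = 1,2.
\]

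To finish, I would observe that the coordinate function $x^p$ is itself trivially a solution of $g_1^{ab}\p_{ab}v = 0$ (as $\p_{ab}x^p = 0$) with identical boundary data. Since the operator $g_1^{ab}\p_{ab}$ is uniformly elliptic and has no zeroth-order term, its Dirichlet problem is uniquely solvable by the strong maximum principle, forcing $J^p = x^p$ in $\Omega$ for $p=1,2$, i.e., $J = \mathrm{Id}$. Combined with $g_1 = cJ^*g_2$ from Lemma \ref{lemma: first linearization det}, this gives $g_1 = cg_2$ and establishes \eqref{g_1 = c g_2}. The feature that makes this short argument work---and which I regard as the conceptual heart of the proof---is that the coordinate functions are solutions of the non-divergence equation $g^{ab}\p_{ab}v = 0$ for \emph{every} positive-definite matrix-valued $g$, a property that fails for the divergence-form equation $\Delta_g v = 0$; it is exactly this feature that upgrades the diffeomorphism-invariant determination of Lemma \ref{lemma: first linearization det} to a gauge-free recovery of $g$ up to a conformal factor.
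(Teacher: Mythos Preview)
Your proof is correct and, in fact, cleaner than the paper's own argument. Both proofs reach the same endpoint---the elliptic boundary value problem
\[
g_1^{ab}\p_{ab}J^p = 0 \text{ in }\Omega,\qquad J^p\big|_{\p\Omega}=x^p,
\]
and then invoke uniqueness to conclude $J=\mathrm{Id}$---but they arrive there by different routes. The paper uses only part~\ref{item 1 in first linearization} of Lemma~\ref{lemma: first linearization det} (the relations $g_1=cJ^\ast g_2$ and $X_{g_1}=c^{-1}J^\ast X_{g_2}$) and then performs a somewhat lengthy Christoffel-symbol computation: it writes $X_{g}^i=-g^{kl}\Gamma(g)_{kl}^i$, expands the transformation law for $\Gamma$ under both the diffeomorphism $J$ and the conformal scaling $c$, and after several cancellations obtains the equation for $J^p$ displayed above. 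You instead exploit part~\ref{item 2 in first linearization} of the same lemma (the solution correspondence $v_1=J^\ast v_2$) together with the observation that coordinate functions $y^p$ trivially solve \emph{every} non-divergence equation $g^{ab}\p_{ab}v=0$; this yields the equation for $J^p$ in one line.

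Your route is shorter and isolates the structural reason the diffeomorphism gauge disappears: affine functions are universal solutions of non-divergence equations, a fact that fails for the divergence-form Laplace--Beltrami equation. The paper's route, while longer, is perhaps more self-contained in that it does not rely on part~\ref{item 2 in first linearization} (which is itself a consequence of part~\ref{item 1 in first linearization}), and the Christoffel computation makes explicit how the conformal-factor terms drop out of the equation for $J$. Both arguments ultimately rest on the same ingredients from Lemma~\ref{lemma: first linearization det}, and both require the simple-connectedness hypothesis through the recovery of the drift $X_{g_1}$ in that lemma.
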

	
	\begin{remark}
		In Theorem~\ref{theorem: J=Id}, it is not necessary to assume that $\Omega$ is uniformly convex.  
		However, it remains unclear whether the assumption that $\Omega$ is simply connected can be removed.  
		The difficulty lies in the fact that the proof ultimately relies on the Poincaré lemma, invoked through \cite[Lemma~4.2]{nurminen2023inverse}. Thus, the theorem may be viewed as a realization of the anisotropic Calder\'on problem for elliptic equations in non-divergence form. 
	\end{remark}

	\begin{proof}[Proof of Theorem \ref{theorem: J=Id}]
		Let us divide the proof into several steps:\\
		
		{\it Step 1. Initialization.}\\

		\noindent	First, we rewrite \eqref{non-divergence elliptic equ} in the form  
		\begin{equation}\label{non-divergence elliptic equ 2}
			\begin{cases}
				\big(-\Delta_{g_j} + X_{g_j}\big) v_j = 0 & \text{in } \Omega,\\
				v_j = \phi & \text{on } \partial\Omega,
			\end{cases}
		\end{equation}
		where $X_{g_j}$ is the vector field given in \eqref{vector field X^b} with $g = g_j$, for $j=1,2$.  
		By Lemma~\ref{lemma: first linearization det}~\ref{item 1 in first linearization}, there exists a diffeomorphism  
		$J: \overline{\Omega} \to \overline{\Omega}$ satisfying $J|_{\partial\Omega} = \mathrm{Id}$ and a conformal factor $c>0$ with $c|_{\partial\Omega}=1$, such that  
		\begin{equation}\label{eq: unique up to iso in first linearized in the pf}
			g_1 = c\, J^\ast g_2, 
			\quad 
			X_{g_1} = c^{-1} J^\ast X_{g_2} 
			\quad \text{in } \Omega.
		\end{equation}

		\bigskip
		
		{\it Step 2. Unique determination of the diffeomorphism.}\\
		
		\noindent	We next claim 
		\begin{equation}\label{claim: J=Id}
			J= \mathrm{Id} \text{ in }\overline{\Omega},
		\end{equation}
		or $J(x)=x$ for all $x\in \overline{\Omega}$. To this end, let us write $\wt x = J(x)$ and use the typical convention to denote by $\frac{\p x^i}{\p \wt x^m}$ the components of the differential of the inverse of $J$ (evaluated at $J$). We have the standard Christoffel symbols transform as
		\begin{equation}\label{de_dphi_aux1}
			\begin{split}
				\Gamma^i_{kl}(J^\ast g_2) =
				\frac{\partial  x^i}{\partial \wt x^m}\,
				\frac{\partial  \wt x^a}{\partial  x^k}\,
				\frac{\partial \wt x^b}{\partial  x^l}\,
				\Gamma^m_{ab}(g_2) \circ J
				+ 
				\frac{\partial^2  \wt x^m}{\partial  x^k \partial x^l}\,
				\frac{\partial  x^i}{\partial \wt x^m}.
			\end{split}
		\end{equation}
		Multiplying \eqref{de_dphi_aux1} by the matrix $(J^\ast g_2)^{kl}$, and applying \eqref{eq:contracted_christoffel}, we can obtain 
		\begin{equation}\label{de_dphi_aux2}
			\begin{split}
				X^i_{J^\ast g_2}= \frac{\partial  x^i}{\partial \wt x^m}\,
				X^m_{g_2} \circ J
				+ 
				(J^\ast g_2)^{kl}\frac{\partial^2 \wt x^m}{\partial  x^k \partial x^l}\,
				\frac{\partial  x^i}{\partial \wt x^m}.
			\end{split}
		\end{equation}

		By \eqref{change of variable in Christoffel} and \eqref{de_dphi_aux1}, one can find 
		\begin{equation}
			\begin{split}
				\Gamma^i_{kl}(g_1)&=\frac{\partial  x^i}{\partial \wt x^m}\,
				\frac{\partial  \wt x^a}{\partial  x^k}\,
				\frac{\partial \wt x^b}{\partial  x^l}\,
				\Gamma^m_{ab}(g_2) \circ J
				+ 
				\frac{\partial^2  \wt x^m}{\partial x^k \partial x^l}\,
				\frac{\partial  x^i}{\partial \wt x^m} \\
				&\quad \, + \frac 12 \big( \delta^{i}_{k} \partial_{l} \log c +  \delta^{i}_{l} \partial_{k} \log c -  (g_1)_{kl} g_1^{ij}\partial_{j} \log c\big) .
			\end{split}
		\end{equation}
		By \eqref{eq:contracted_christoffel}, \eqref{change of variable in Christoffel} and \eqref{de_dphi_aux2}, one has 
		\begin{equation}\label{J*g_2 Gamma 1}
			\begin{split}
				(J^\ast g_2)^{kl}\Gamma^i_{kl}(g_1)&=\underbrace{(J^\ast g_2)^{kl}\Gamma^i_{kl}(cJ^\ast g_2)}_{\text{By }g_1 =cJ^\ast g_2}\\
				&=\underbrace{(J^\ast g_2)^{kl}\Gamma^i_{kl}(J^\ast g_2)}_{=-X^i_{J^\ast g_2}} \\
				&\quad \, + \frac{1}{2}(J^\ast g_2)^{kl}\big( \delta^{i}_{k} \partial_{l} \log c +\delta^{i}_{l} \partial_{k} \log c - (g_1)_{kl} g_1^{ij}\partial_{j} \log c\big)\\
				&=-\Big(\frac{\partial x^i}{\partial \wt x^m}\,
				X^m_{g_2} \circ J
				+
				(J^\ast g_2)^{kl}\frac{\partial^2\wt x^m}{\partial x^k \partial x^l}\,
				\frac{\partial   x^i}{\partial  \wt x^m}\Big) \\
				&\quad \, + \frac{1}{2}(J^\ast g_2)^{kl}\big( \delta^{i}_{k} \partial_{l} \log c +\delta^{i}_{l} \partial_{k} \log c - (g_1)_{kl} g_1^{ij}\partial_{j} \log c\big).
			\end{split}
		\end{equation}
		Via \eqref{eq: unique up to iso in first linearized} and \eqref{eq:contracted_christoffel}, the left-hand side of \eqref{J*g_2 Gamma 1} is 
		\begin{equation}\label{J*g_2 Gamma 2}
			(J^\ast g_2)^{kl}\Gamma^i_{kl}(g_1)=cg_1^{kl}\Gamma^i_{kl}(g_1)=-cX_{g_1}.
		\end{equation}

		On the one hand, plugging \eqref{J*g_2 Gamma 2} into \eqref{J*g_2 Gamma 1}, and using the second relation in \eqref{eq: unique up to iso in first linearized in the pf}, i.e., $X_{g_1}=c^{-1}J^\ast X_{g_2}$, we obtain
		\begin{equation}\label{J*g_2 Gamma 3}
			\begin{split}
				\left(J^*X_{g_2}\right)^i&=\left(cX_{g_1}\right)^i\\
				&=\frac{\partial  x^i}{\partial \wt x^m}\,
				X^m_{g_2} \circ J
				+ 
				(J^\ast g_2)^{kl}\frac{\partial^2  \wt x^m}{\partial x^k \partial x^l}\,
				\frac{\partial   x^i}{\partial \wt x^m} \\
				&\quad \, - \frac{1}{2} (J^\ast g_2)^{kl} \big( \delta^{i}_{k} \partial_{l} \log c +  \delta^{i}_{l} \partial_{k} \log c - (g_1)_{kl} g_1^{ij}\p_{j} \log c \big).
			\end{split}
		\end{equation}
		On the other hand, via \eqref{formula in change of coordinates}, we also have 
		\begin{equation}\label{J*g_2 Gamma 4}
			\begin{split}
				\left(J^*X_{g_2}\right)^i=\frac{\partial x^i}{\partial \wt x^m}\, X_{g_2}^m \circ J^,
			\end{split}
		\end{equation}
		so we can insert \eqref{J*g_2 Gamma 4} into the left-hand side of \eqref{J*g_2 Gamma 3}, which can be canceled by the first term in the right-hand side of \eqref{J*g_2 Gamma 3}. Thus, one can obtain 
		\begin{equation}\label{J*g_2 Gamma 5}
			\begin{split}
				(J^\ast g_2)^{kl}\frac{\partial^2  \wt x^m}{\partial  x^k \partial x^l}\,
				\frac{\partial x^i}{\partial \wt x^m} = \frac{1}{2}(J^\ast g_2)^{kl}\big(  \delta^{i}_{k} \partial_{l} \log c + \delta^{i}_{l} \partial_{k} \log c - (g_1)_{kl} g_1^{ij}\partial_{j} \log c\big).
			\end{split}
		\end{equation}
		Finally, using $g_1 =cJ^\ast g_2$ to the both sides of \eqref{J*g_2 Gamma 5}, we have
		\begin{equation}\label{J*g_2 Gamma 6}
			\begin{split}
				g_1^{kl}\frac{\partial^2  \wt x^m}{\partial  x^k \partial x^l}\,
				\frac{\partial  x^i}{\partial \wt x^m}=\frac{1}{2}g_1^{kl}\big(\delta^{i}_{k} \partial_{l} \log c + \delta^{i}_{l} \partial_{k} \log c - (g_1)_{kl} g_1^{ij}\partial_{j} \log c\big),
			\end{split}
		\end{equation}
		which can also be written in an equivalent form 
		\begin{equation}\label{J*g_2 Gamma 7}
			\begin{split}
				g_1^{kl}\frac{\partial^2  \wt x^m}{\partial x^k \partial x^l}=\frac{1}{2}\frac{\partial \wt x^m}{\partial x^i} \big( 2g_1^{il}  \partial_{l} \log c  - 2   g_1^{ij}\partial_{j} \log c\big)=0,
			\end{split}
		\end{equation}
		where we used $g_1^{kl}(g_1)_{kl}=\tr ( \mathrm{I}_2)=2$ for the last term in the right-hand side of \eqref{J*g_2 Gamma 6}.
		
		Thus, for the individual $m=1,2$, the equation \eqref{J*g_2 Gamma 7} implies that 
		\begin{equation}\label{J*g_2 Gamma 8}
			\begin{cases}
				g_{1}^{kl}\frac{\partial^2 \wt x^m}{\partial x^k \partial x^l}=0 &\text{ in }\Omega, \\
				\wt x^m = x^m &\text{ on }\p \Omega,
			\end{cases}
		\end{equation}
		which is a second order elliptic equation of non-divergence form, where we apply Lemma \ref{lemma: first linearization det} to have $J|_{\p \Omega}=\mathrm{Id}$, so that $\wt x^m=x^m$ on $\p \Omega$ for $m=1,2$. Therefore, by the uniqueness of the boundary value problem \eqref{J*g_2 Gamma 8} (for example, see \cite{gilbarg2015elliptic}), this implies that $\wt x^m\equiv x^m$ in $\Omega$ for $m=1,2$ (it is easy to see that $x^m$ is a solution to \eqref{J*g_2 Gamma 8}). This infers that $J(x)=x$ in $\Omega$ as we wish. This proves the claim \eqref{claim: J=Id}. \\
		
		{\it Step 3. Summary.}\\
		
		\noindent Finally, using \eqref{eq: unique up to iso in first linearized in the pf}, we know that 
		\begin{equation}
			g_1 =\underbrace{cJ^\ast g_2 = cg_2}_{\text{By $J=\mathrm{Id}$}} \text{ in }\Omega,
		\end{equation}
		which proves the assertion \eqref{g_1 = c g_2}.
	\end{proof}

	\begin{remark}~
		\begin{enumerate}[(i)]
			\item From~\eqref{J*g_2 Gamma 7}, the mapping $J(x)$ satisfies the elliptic equation~\eqref{J*g_2 Gamma 8} without requiring knowledge of the conformal factor $c>0$.
			\item In Theorem~\ref{theorem: J=Id}, convexity of $\Omega$ is not needed; however, simple connectedness is essential for determining $X_g$.
		\end{enumerate}
	\end{remark}
	
	The remainder of the paper is devoted to recovering the conformal factor $c>0$ in $\Omega$ using suitable CGO solutions for the first linearized equation.

	\section{Complex geometrical optics solutions}\label{sec: CGOs}

	This section is devoted to constructing CGO solutions for the first linearized equation~\eqref{Linearized main equation} and its adjoint equation \eqref{adjoint of 1st linearization}. We also derive expansion formulas for the correction terms and provide estimates for the related oscillatory integrals.

	\subsection{Isothermal coordinates}\label{sec: isothermal coordinates}
	
	Recall that the isothermal coordinates (see, for example, \cite{Ahlfors1966}) correspond to a change of variables $\chi: \Omega \to \widetilde{\Omega}:= \chi(\Omega)$, where we denote by $\chi$ the associated quasi-conformal mapping. In what follows, we introduce isothermal coordinates so that the metric $g_1$ takes the form
	\begin{equation}\label{isothermal for g_1}
		g_1 = \mu \, I_{2\times 2},
	\end{equation}
	for some positive scalar function $\mu = \mu(x)$, which will play a key role in our subsequent analysis.
	
	Using Lemma~\ref{lemma: first linearization det}, we can rewrite the equation~\eqref{first linearized equation in pf} (in the case $j=1$) as
	\begin{equation}\label{equ: isothermal 1}
		\begin{cases}
			-\dfrac{1}{\mu} \Delta \mathbf{v}_1 + \chi^\ast X_{g_1} \cdot \nabla \mathbf{v}_1 = 0 & \text{in } \widetilde{\Omega}, \\
			\mathbf{v}_1 = \phi \circ \chi & \text{on } \partial \widetilde{\Omega},
		\end{cases}
	\end{equation}
	where $\mathbf{v}_1 := v_1 \circ \chi$. This reformulation is also instrumental in the identification of the conformal factor \(c\) in \(\Omega\).
	
	Moreover, equation~\eqref{equ: isothermal 1} can be further simplified to the standard form
	\begin{equation}\label{equ: isothermal 2}
		\begin{cases}
			-\Delta \mathbf{v}_1 + \mathbf{X}_{g_1} \cdot \nabla \mathbf{v}_1 = 0 & \text{in } \widetilde{\Omega}, \\
			\mathbf{v}_1 = \phi \circ \chi & \text{on } \partial \widetilde{\Omega},
		\end{cases}
	\end{equation}
	for some vector field $\mathbf{X}_{g_1}$ depending on \(g_1\), \(\chi\), and \(\mu\).

	Since we have already applied the change of variables to transform all indices from $2$ to $1$, we now consider the adjoint problem corresponding to equation~\eqref{equ: isothermal 2}. Let $\mathbf{v}_1^\ast$ denote a solution to the adjoint equation:
	\begin{equation}\label{adjoint first linear eq in isothermal j=1}
		\begin{cases}
			\Delta \mathbf{v}_1^\ast + \nabla \cdot \left( \mathbf{X}_{g_1}  \mathbf{v}_1^\ast \right) = 0 & \text{in } \widetilde{\Omega}, \\
			\mathbf{v}_1^\ast = \phi \circ \chi & \text{on } \partial \widetilde{\Omega},
		\end{cases}
	\end{equation}
	where $\phi$ is an arbitrary function.
	
	Using Lemma~\ref{lemma: first linearization det}, we may also perform the same change of variables for the adjoint equation~\eqref{adjoint of 1st linearization} with $j = 2$, while assigning the Dirichlet boundary condition to be identical to that of $\mathbf{v}_1^\ast \big|_{\partial \widetilde{\Omega}}$. By the uniqueness of solutions to elliptic equations, this yields that the adjoint problem for $j=2$ also takes the form of equation~\eqref{adjoint first linear eq in isothermal j=1}.
	
	Hence, we denote the unified form of the adjoint problem in isothermal coordinates as:
	\begin{equation}\label{adjoint first linear eq in isothermal}
		\begin{cases}
			\Delta \mathbf{v}^\ast + \nabla \cdot \left( \mathbf{X}_g  \mathbf{v}^\ast \right) = 0 & \text{in } \widetilde{\Omega}, \\
			\mathbf{v}^\ast = \phi \circ \chi & \text{on } \partial \widetilde{\Omega},
		\end{cases}
	\end{equation}
	where $g = g_1$. 
	From this point forward, all notations introduced above are fixed and will be used consistently throughout the remainder of the paper.

	\subsection{The construction of CGOs}

	For $X\in C^\infty_c(M,T^*M)$, we recall the construction of \cite{guillarmou2011identification} of CGO solutions to
	\begin{equation}\label{equ of CGO solutions}
		\LC -\Delta_g +X \cdot \nabla \RC v=0 \text{ on } \wt M.
	\end{equation}
	Here $X \cdot \nabla v =g(X,d v)$.
	
	We want to show that the desired CGO solutions of the linear equation
	\begin{equation}\label{linear general}
		\LC -\Delta_g +X \cdot \nabla \RC v =0 \text{ in }\Omega,
	\end{equation}
	and its adjoint equation 
	\begin{equation}\label{linear adjoint general}
		- \Delta_g v - \nabla \cdot (X  v) =0 \text{ in }\Omega,
	\end{equation}
	are of the form 
	\begin{equation}\label{CGO solutions}
		F_A^{-1}e^{\Phi/h}(a+r),
	\end{equation}
	for (small) $h>0$, where $X:\overline{\Omega}\to \R^2$ is a $C^\infty$-smooth vector field. Here, $\Phi=\Phi(z)$ be is holomorphic Morse function, $r$ is the corresponding remainder term, $F_A=e^{i\alpha}$, where $\alpha$ is a function that solves $\op \alpha =A$, and $A$ will be given by $X$. We have a similar ansatz for CGOs with antiholomorphic phase, see Lemma \ref{lemma: CGO solution summarize}.
	
	The construction of solutions is based on the methods developed in \cite{guillarmou2011calderon} and \cite{guillarmou2011identification}. Although our setting only requires the construction within global holomorphic coordinates, we follow the general framework used in these references, which are formulated on general Riemannian surfaces. This choice facilitates cross-referencing and provides a flexible foundation for future applications.
	
	We begin by introducing the standard notations and definitions adopted in the aforementioned works. Let $\Sigma$ be a Riemann surface compactly contained in an open surface $M$. We extend the metric $g$ and the vector field $X$ smoothly to a larger open surface $\widetilde{M} \supset M$ such that $X \in C_c^\infty(\wt M)$.

	\subsubsection{Calculus on Riemannian surfaces}
	
	%By duality given by $g$ we consider $X$ also as a $1$-form $\widetilde M\to T^*\widetilde M$.
	%on the Riemann surface $ M$. We keep the presentation brief and refer to \cite[Section 2]{guillarmou2011identification} for details. %For a summary about the Riemannian differential calculus on Riemann surfaces, we refer to \cite{guillarmou2011calderon}. 
	The complexified cotangent bundle $\C T^*M$ has the splitting
	\[
	\C T^*M= T^*_{1,0}M \oplus T^*_{0,1}M
	\]
	determined by the eigenspaces of the Hodge star operator $\star$. 
	In holomorphic coordinates $z=(x_1,x_2)$ the space $T^*_{1,0}M$ is spanned by $dz$ and $T^*_{0,1}M$ is spanned by $d\ol z$, where 
	\[
	dz=dx_1+\mathsf{i}dx_2 \quad \text{and}\quad d\overline z=dx_1-\mathsf{i}dx_2.
	\]
	The invariant definitions of $\p$ and $\op$ operators are given as
	\[
	\p:=\pi_{1,0}d \text{ and } \op:=\pi_{0,1}d.
	\]
	Then $d=\p+\overline \p$ and in holomorphic coordinates
	\[
	\p f = \p_z f\, dz, \quad \op f =\p_{\overline z} f\, d\overline z,
	\]
	where $\p$ and $\op$ are given (as in \eqref{definition of d and d-bar operators}) by 
	\begin{equation*}%\label{definition of d and d-bar operators}
		\begin{split}
			\p_z =\frac{1}{2}\LC \p_{x_1} -\mathsf{i}\p_{x_2} \RC, \quad 	\op=\frac{1}{2}\LC \p_{x_1} +\mathsf{i}\p_{x_2} \RC.
		\end{split}
	\end{equation*}
	By expressing $dx_1$ and $dx_2$ in terms of $dz$ and $d\overline z$, a $1$-form $X=X_1\, dx_1+X_2\, dx_2$ in holomorphic coordinates can be written as
	\begin{equation}\label{eq:x_formula}
		X=\frac 12 (X_1-\mathsf{i}X_2)\, dz+\frac 12 (X_1+\mathsf{i}X_2)\, d\overline z =:X_{1,0}+X_{0,1}
	\end{equation}
	so that $\pi_{1,0}X=\frac 12 (X_1-\mathsf{i}X_2)$ and $\pi_{0,1}X=\frac 12 (X_1+\mathsf{i}X_2)$. Using the above formula for $X$, we define
	\[
	\p X:=dX_{0,1}, \quad \op X:=dX_{1,0}.
	\]
	In holomorphic coordinates, this is equivalent to
	\[
	\p(u\, dz+v\, d\overline z)=\p v\wedge d\overline z, \quad \op(u\, dz+v\, d\overline z)=\overline\p u\wedge dz.
	\]
	The Laplacian is given by
	\[
	-\Delta_g f=-2\mathsf{i}\ast \op\p f.
	\]
	(We note that we use the opposite sign for the Laplacian to \cite{guillarmou2011calderon, guillarmou2011identification}.)
	
	By \cite[Proposition 2.1]{guillarmou2011identification}, importantly, there is a right inverse $\op^{-1}$ for $\op$ in the sense that
	\begin{equation}\label{eq:inverse_of_op}
		\op\s \s  \op^{-1}\omega=\omega \text{ for all } \omega\in C_0^\infty(M,T_{0,1}^* M)
	\end{equation}
	such that $\op^{-1}$ is bounded from $L^p(T_{1,0}^* M)$ to $W^{1,p}(M)$ for any $p\in (1,\infty)$. We have analogous properties for the Hermitian adjoint of $\op$
	\[
	\op^*=-\mathsf{i}\ast \p: W^{1,p}(T_{0,1}^* M)\to L^p( M).
	\]
	%which is the Hermitean adjoint of $\op$. 
	In holomorphic coordinates $z$, the operator $\op^*$ is just $\p$.
	We define 
	\begin{equation}\label{eq:p_psi_inv}
		\op_\psi^{-1}:=\mathcal{R}\op^{-1}e^{-2i\psi/h}\mathcal{E} \quad \text{and}\quad  \op_\psi^{*-1}:=\mathcal{R}\op^{*-1}e^{2i\psi/h}\mathcal{E},
	\end{equation}
	where $\mathcal{E} : W^{l,p}(\Sigma) \to W_c^{l,p}( M)$ an extension operator for some $M$ compactly containing $M$ and $\mathcal R$ is the restriction operator onto $\Sigma$.
	By \cite[Lemma 2.2 and Lemma 2.3]{guillarmou2011identification} we have for $p>2$ and $2\leq q\leq p$ the following estimates 
	\begin{align}\label{eq:sobo_decay}
		\begin{split}
			\big\| \overline \p_\psi^{-1}f\big\|_{L^q(M)}&\leq C h^{1/q}\norm{f}_{W^{1,p}(M, T_{0,1}^*M)} \\
			\big\|\overline \p_\psi^{*-1}f\big\|_{L^q(M)}&\leq C h^{1/q}\norm{f}_{W^{1,p}(M, T_{1,0}^*M)}.
		\end{split}
	\end{align}
	Moreover, there is $\eps>0$ such that 
	\begin{align}\label{eq:sobo_decayL2}
		\begin{split}
			\big\|\overline \p_\psi^{-1}f\big\|_{L^2(M)}&\leq C h^{1/2+\eps}\norm{f}_{W^{1,p}(M, T_{0,1}^*M)} \\
			\big\|\overline \p_\psi^{*-1}f\big\|_{L^2(M)}&\leq C h^{1/2+\eps}\norm{f}_{W^{1,p}(M, T_{1,0}^*M)}.
		\end{split}
	\end{align}
	
	If $\psi$ has no critical points on $M$, we can obtain better estimates than \eqref{eq:sobo_decayL2} and \eqref{eq:sobo_decay}. Indeed, we have that for all $f\in C^\infty(M;T^*_{0,1}M)$,
	\begin{equation}\label{eq: no critical point expansion} 
		\begin{split}
			\op_\psi^{-1} f = \mathcal{R}\op^{-1}e^{-2\mathsf{i}\psi/h}\mathcal{E} f = \frac{\mathsf{i}h}{2}\mathcal{R}\op^{-1}\Big(\left( \overline\partial e^{-2\mathsf{i}\psi/h}\right)\frac{\mathcal{E} f}{\overline\partial\psi}\Big),
		\end{split}
	\end{equation}
	where for all $\omega\in \mathcal D'(M;T^{\ast}_{0,1}M)$, $\omega/\overline\partial\psi$ denotes the unique scalar function such that $\overline\partial \psi\s (\omega/\overline\partial\psi) = \omega$. In holomorphic coordinates, $\overline\partial^{-1}$ has Schwartz kernel given by $(z-z')^{-1}$. Thus, writing \eqref{eq: no critical point expansion} in local coordinates and integrating by parts yields
	\begin{equation}\label{eq: no critical point expansion II} 
		\begin{split}
			\op_\psi^{-1} f =e^{-2\mathsf{i}\psi/h} \frac{\mathsf{i}h}{2} \frac{f}{\overline\partial\psi} + \frac{\mathsf{i}h}{2}\mathcal{R}\op^{-1}\left( e^{-2\mathsf{i}\psi/h}\overline\partial\left(\frac{\mathcal{E} f}{\overline\partial\psi}\right)\right).
		\end{split}
	\end{equation}
	Consequently, in the case when $\psi$ has no critical points, continuity of $\overline \p^{-1}: L^p\to W^{1,p}$  immediately gives the estimate
	\begin{equation}\label{eq: no critical point expansion III}
		\big\|\op_\psi^{-1} f\big\|_p \leq Ch \|f\|_{W^{1,p}},
	\end{equation}
	for $p\in(1,\infty)$. Similarly we have
	\begin{equation}\label{eq: no critical point expansion III adjoint}
		\big\|\overline \p_\psi^{*-1}f\big\|_p \leq Ch \|f\|_{W^{1,p}}.
	\end{equation}
	% \begin{definition}\label{def: degree}
		% 	Similar to \cite[Definition 3.1]{CLLT2023inverse_minimal}, we call that $f$ has \emph{degree} $\deg(f)=N\in \Z$ at $z_0$ if $f$ is of the form 
		% 	\begin{equation}
			% 		f=Cz^k z^l + \mathcal{O}(\abs{z}^{k+l+1}),
			% 	\end{equation}
		% 	for some constant $C$, where $k+l=N$.
		% \end{definition}

	%   Also, if $f$ is a function of degree $\deg = 1$ at possible critical points of $\psi$, we also have the formula
	% \begin{equation}\label{eq: deg1 expansion} 
		% 	\begin{split}
			% 		\op_\psi^{-1} f =e^{-2\mathsf{i}\psi/h} \frac{\mathsf{i}h}{2} \frac{f}{\overline\partial\psi} + o_{L^p}(h).
			% 	\end{split}
		% \end{equation}
	% See \cite[Lemma 3.2]{CLLT2023inverse_minimal}.
	If the holomorphic function $\Phi$ has no critical points, we have by (the proof of) \cite[Lemma 3.5]{CLLT2023inverse_minimal} to arbitrary order $N\in \N\cup \{0\}$ the following expansion   
	\begin{align}\label{equ: d-bar inverse of f}
		\op_{\psi}^{-1}(f)=e^{-2\mathsf{i}\psi/h}\sum_{j=1}^{N+1}h^j F^j+ h^{N+1}\op^{-1}_{\psi}(\op F^{N+1}),
	\end{align}
	where $F^j$, $j\in \N$, are defined iteratively by
	\[
	F^1=\frac{\mathsf{i}}{2}\frac{f}{\overline\p \psi}\in C^\infty
	\]
	and
	\[
	F^{j+1}=\frac{\mathsf{i}}{2}\frac{1}{\overline\p \psi}\overline\p F^j\in C^\infty.
	\]
	Note that the functions $F^j\in C^\infty$, $j=1,\ldots, N+1$ in \eqref{equ: d-bar inverse of f}, are independent of $h$. The expansion formula holds since $\overline \p \psi\neq 0$ due to 
	\[
	2\mathsf{i}\overline{\p}\psi=\overline\p (\Phi-\overline\Phi)=-\overline \p \overline \Phi, 
	\]
	which must be nonzero because $\Phi$ is holomorphic. We have a similar formula for $\p_\psi^{-1} f$.

	\subsubsection{CGOs with holomorphic phase}
	
	The CGOs we next introduce have the same form
	\begin{equation}\label{CGO solution with holo phase}
		\begin{split}
			v=F_A^{-1}e^{\Phi/h}(a+r_h),
		\end{split}
	\end{equation}
	as in \cite{guillarmou2011identification}. 
	Here $\Phi=\phi+\mathsf{i}\psi$ is a holomorphic Morse function and $a$ is a holomorphic function defined on $M$. Moreover, 
	the function $F_A$ is given by
	\begin{equation}\label{equ: F_A nonzero}
		F_A=e^{\mathsf{i}\alpha},
	\end{equation}
	where $\alpha$ solves 
	\[
	\op \alpha=A
	\]
	with $A=\pi_{0,1}\big(-\frac{\mathsf{i}}{2} X\big)=-\frac{\mathsf{i}} {2}\pi_{0,1}X$.
	Note that $\alpha$ always exists by \eqref{eq:inverse_of_op}. 
	
	% , cf. \cite[Proposition 3.1, Eq. 21]{guillarmou2011identification}.  
	
	%After applying the isothermal coordinates, we first consider 
	The Laplace operator with a drift term and potential
	$$
	L:=-\Delta_g +X \cdot \nabla +q,
	$$ 
	% can be written as a magnetic Shr\"odinger equation 
	%    \[
	%    -\abs{g}(\p_{x_j}+iA)  
	%    \]
	factorized as the magnetic Schr\"odinger operator 
	\begin{equation}\label{eq:L_magnetic form}
		L =2 F_{\overline{A}}^{-1}  \overline{\partial}^\ast \big[ F_{\overline{A}} F^{-1}_A \overline\partial F_{A}  \big] + Q,
	\end{equation}
	see \cite[Section 5]{guillarmou2011identification},
	where $A$ and $F_A$ are as before, 
	\begin{equation}\label{eq:formular_for_FA}
		F_{\overline A}=e^{\mathsf{i}\overline \alpha}  ,
	\end{equation}
	and 
	\begin{equation}\label{eq:formula_for_Q}
		Q=\frac{\mathsf{i}}{2}\ast d X -\frac 14 \abs{X}^2+\frac 12 \nabla \cdot X+q.
	\end{equation}
	Note that while $\alpha$ solves $\overline \p \alpha =A$, $\overline \alpha$  solves
	%Here, the complex valued function $\alpha$ solves
	\[
	%\overline \p \alpha =A \quad \text{and}\quad  
	\p \overline \alpha = \overline A.
	\]
	We will apply the above in the case $q=0$, but we include the general case for future reference.
	
	For concreteness and to illustrate how the computations using complex derivatives work, let us verify \eqref{eq:L_magnetic form}. Let $f\in C^\infty$, then one may compute
	\begin{equation}
		\begin{split}
			&\quad \, 2 F_{\overline{A}}^{-1}  \overline{\partial}^\ast \big[ F_{\overline{A}} F^{-1}_A \overline\partial (F_{A}f)\big]\\
			&=2 F_{\overline{A}}^{-1}  \overline{\partial}^\ast\big[ F_{\overline{A}} F^{-1}_A \overline\partial (e^{\mathsf{i}\alpha}f)\big]\\
			&=2 F_{\overline{A}}^{-1}  \overline{\partial}^\ast \big[ F_{\overline{A}} F^{-1}_A e^{\mathsf{i}\alpha}(\mathsf{i}Af+\overline\p f)\big]\\
			&=2 F_{\overline{A}}^{-1}  \overline{\partial}^\ast \big[ F_{\overline{A}} (\mathsf{i}Af+\overline\p f)\big]\\
			&=-2\mathsf{i}\ast F_{\overline{A}}^{-1}  \partial  \big[ F_{\overline{A}} (\mathsf{i}Af+\overline\p f)\big]\\
			&=-2\mathsf{i} \ast \Big[\mathsf{i}\overline A \wedge (\mathsf{i}Af+\overline\p f) +   \mathsf{i}(\p A)f+\mathsf{i}A\wedge \p f+\p\overline \p f\Big] \\
			%&=-2\mathsf{i} \ast ( |A|^2f+f\p A+A\p f+\p\overline\p f)\\
			&=-\Delta_g f +2\ast (\overline A \wedge \overline \p f+A\wedge \p f)-2\mathsf{i} \ast (\mathsf{i}\p A - \overline A\wedge A) f.
		\end{split}
	\end{equation}
	Here $\overline A \wedge \overline \p f+A\wedge \p f=2\text{Re}(A\wedge \p f)$. Computing using holomorphic coordinates $z=(x_1,x_2)$ we have by \eqref{eq:x_formula} that $A=-\frac{\mathsf{i}}{4} (X_1+\mathsf{i}X_2)\, d\overline z$ and consequently 
	\begin{equation}
		\begin{split}
			A\wedge \p f&=-\frac{\mathsf{i}}{4} (X_1+\mathsf{i}X_2)\, d\overline z\wedge \frac 12 (\p_{1}f-\mathsf{i}\p_{2}f)\, dz\\
			%&=\frac 12 (X_1+\mathsf{i}X_2)\, d\overline z\wedge \p_zf\, dz\\
			&=\frac{\mathsf{i}}{8} (X_1+\mathsf{i}X_2) (\p_{1}f-\mathsf{i}\p_{2}f)\, dz\wedge d\overline z\\
			&= \frac{1}{4}(X_1+\mathsf{i}X_2) (\p_{1}f-\mathsf{i}\p_{2}f)\, dx_1\wedge dx_2,
		\end{split}
	\end{equation}
	where we used $dz\wedge d\overline z=-2\mathsf{i}\, dx_1\wedge dx_2$.
	Recalling that
	\begin{equation}\label{dV_g}
		\ast dV_g=1 \text{ and } dV_g =\sqrt{\abs{g}}\, dx=\sqrt{\abs{g}}\,  dx_1\wedge dx_2,
	\end{equation}
	we obtain
	\begin{equation}
		\begin{split}
			2\ast (\overline A \wedge \overline \p f+A\wedge \p f) &=2\ast 2\text{Re}(A\wedge \p f) \\
			&=\abs{g}^{-1/2}(X_1\p_{1}f+X_2\p_{2} f)\\
			&=g(X,\nabla f)\\
			&=X\cdot \nabla f.
		\end{split}
	\end{equation}
	Hence, by setting
	\[
	Q=2\mathsf{i} \ast (\mathsf{i}\p A - \overline A\wedge A)+q,
	\]
	we have 
	\[
	-\Delta_g +X \cdot \nabla+q=2 F_{\overline{A}}^{-1}  \overline{\partial}^\ast\big( F_{\overline{A}} F^{-1}_A \overline\partial F_{A}\big) + Q
	\]
	as claimed. To have the formula \eqref{eq:formula_for_Q}, let us compute 
	\begin{equation}
		\begin{split}
			Q-q &=2\mathsf{i} \ast (\mathsf{i}\p A - \overline A\wedge A) \\
			&=-2\ast \p_z\Big[\frac{-\mathsf{i}}{4}(X_1+\mathsf{i}X_2) dz\wedge d\overline z-2\mathsf{i}\ast \frac{\mathsf{i}}{4}(X_1-\mathsf{i}X_2) \big(\frac{-\mathsf{i}}{4}(X_1+iX_2)\big)\Big] \, dz\wedge  d\overline z \\
			&=\frac{\mathsf{i}}{4}  \big[ (\p_{1}X_1+\p_{2}X_2)+\mathsf{i}(\p_{1}X_2-\p_{2}X_1)\big] \ast \,  dz\wedge d\overline z-\frac{\mathsf{i}}{8}(X_1^2+X_2^2)\ast dz\wedge d\overline z \\
			&=\frac 12 \nabla\cdot X  -\frac 14 \abs{X}^2+\frac{\mathsf{i}}{2} \ast dX,
		\end{split}
	\end{equation}
	which shows the identity.

	\begin{lemma}[CGO solutions]\label{lemma: CGO solution}
		Let 
		\[
		V':=-\abs{F_A}^2  \quad  \text{and} \quad  V:=\frac 12 Q \abs{F_A}^{-2},
		\]
		where $F_A$ and $Q$ are given by \eqref{eq:formular_for_FA} and \eqref{eq:formula_for_Q}, respectively. Then
		\[
		v=F_A^{-1}e^{\Phi/h}(a+r_h)
		\]
		%the CGO solution is of the form \eqref{CGO solution with holo phase} that 
		solves the linear equation \eqref{equ of CGO solutions}, where $r_h$ is the remainder term given by
		\begin{equation}
			\label{eq: def of rh}
			r_h=-\overline{\p}_\psi^{-1}V's_h%=-  \overline{\p}_\psi^{-1}V'\sum_{j=0}^\infty T_h^j\op_\psi^{*-1}(Va),
		\end{equation}
		where 
		\begin{equation}\label{eq: def of sh}
			\begin{split}
				s_h=\sum_{j=0}^\infty T_h^j\op_\psi^{*-1}(Va). 
			\end{split}
		\end{equation}
		with
		\begin{equation}\label{def: Th}
			T_h:=\op_\psi^{*-1}V\overline\p_\psi^{-1}V'.
		\end{equation}
		% 	Moreover, we can also write
		% 	\begin{equation}\label{eq: def of sh}
			% 		\begin{split}
				% 			s_h=-\sum_{j=0}^\infty T_h^j\op_\psi^{*-1}(Va). 
				% 		\end{split}
			% 	\end{equation}
	\end{lemma}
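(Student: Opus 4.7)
The plan is to exploit the magnetic-Schrödinger factorization \eqref{eq:L_magnetic form} of $L = -\Delta_g + X\cdot\nabla$ (with $q=0$), which has already been established earlier in the excerpt. Substituting the ansatz $v = F_A^{-1} e^{\Phi/h}(a + r_h)$ into $Lv = 0$, one observes that $F_A v = e^{\Phi/h}(a + r_h)$, and since both $\Phi$ and $a$ are holomorphic, $\overline\partial(F_A v) = e^{\Phi/h}\,\overline\partial r_h$. Combined with the identity $F_{\overline A}F_A^{-1} = |F_A|^{-2}$ and the definitions $V' = -|F_A|^2$, $V = \tfrac12 Q|F_A|^{-2}$, the equation $Lv=0$ reduces to the single scalar identity
\begin{equation}
\overline\partial^{*}\bigl[|F_A|^{-2}\, e^{\Phi/h}\,\overline\partial r_h\bigr] = -V\, e^{\Phi/h}(a + r_h),
\end{equation}
so it is enough to construct $r_h$ realizing this relation.

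Next I would convert the scalar identity into a Fredholm equation on $(0,1)$-forms. Setting $r_h = -\overline\partial_\psi^{-1}(V' s_h)$ for an unknown $(0,1)$-form $s_h$ and using $\overline\partial\,\overline\partial_\psi^{-1}\omega = e^{-2\mathsf{i}\psi/h}\omega = e^{-\Phi/h}e^{\overline\Phi/h}\omega$, the left-hand side telescopes: the $|F_A|^{-2}$ cancels against $V' = -|F_A|^2$ and the oscillatory factor combines to give $e^{\overline\Phi/h}\,\overline\partial^{*} s_h$, thanks to $\partial e^{\overline\Phi/h}=0$ (since $\overline\Phi$ is antiholomorphic) and Leibniz for $\overline\partial^*=-\mathsf{i}\ast\partial$. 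Rewriting $e^{\overline\Phi/h}=e^{\Phi/h}e^{-2\mathsf{i}\psi/h}$ and unpacking the definition of $\overline\partial_\psi^{*-1}$ from \eqref{eq:p_psi_inv} then recasts the identity as the fixed-point equation
\begin{equation}
(I - T_h)\, s_h = \overline\partial_\psi^{*-1}(Va),
\end{equation}
with $T_h = \overline\partial_\psi^{*-1}\, V\,\overline\partial_\psi^{-1}\, V'$. Inverting by Neumann series gives precisely the formula $s_h = \sum_{j=0}^{\infty} T_h^{\,j}\,\overline\partial_\psi^{*-1}(Va)$ of \eqref{eq: def of sh}, and then \eqref{eq: def of rh} for $r_h$ follows.

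The central analytic task is to justify this Neumann series and to close the fixed-point argument in a suitable Banach space. I would work in $L^p(M, T^*_{0,1}M)$ with $p>2$ (so $W^{1,p}\hookrightarrow L^\infty$). Multiplication by the smooth functions $V,V'$ is bounded on $L^p$ and improves regularity to $W^{1,p}$, so that each occurrence of $\overline\partial_\psi^{-1}$ or $\overline\partial_\psi^{*-1}$ can be fed with a $W^{1,p}$ input; the decay estimates \eqref{eq:sobo_decayL2}--\eqref{eq:sobo_decay} then supply a factor $h^{1/2+\eps}$ each. Hence $\|T_h\|_{L^p\to L^p}\leq C\, h^{1/2+\eps}$ for some $\eps>0$, the Neumann series converges absolutely for $h$ small, and $r_h$ is well defined with $\|r_h\|\to 0$ as $h\to 0$. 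Reversing the algebraic reductions then verifies the scalar identity, and hence $Lv=0$ on $\widetilde M$.

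The principal obstacle is the behavior at the critical points of the holomorphic Morse phase $\Phi$: in their absence the much stronger estimate $\|\overline\partial_\psi^{-1}f\|_{L^p}\leq Ch\,\|f\|_{W^{1,p}}$ holds via the stationary-phase identity \eqref{eq: no critical point expansion II}, whereas at a critical point only the weaker $h^{1/2+\eps}$ gain of \cite{guillarmou2011identification} survives. Verifying that this degraded gain nonetheless suffices to render $T_h$ a contraction, and in fact produces a remainder $r_h$ with quantitative decay in $h$ (as will be needed for the later asymptotic analysis of the second linearization), is the delicate point of the argument.
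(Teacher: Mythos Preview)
Your approach is essentially the same as the paper's: both use the magnetic-Schr\"odinger factorization \eqref{eq:L_magnetic form}, the holomorphy of $\Phi$ and $a$ to kill $\overline\partial(e^{\Phi/h}a)$, the identity $F_{\overline A}F_A^{-1}=|F_A|^{-2}$, and the antiholomorphy of $\overline\Phi$ to pull out $e^{\overline\Phi/h}$ past $\overline\partial^*$. The only difference is direction of presentation: the paper takes the formulas for $r_h,s_h$ as given and verifies $Lv=0$ by a direct telescoping computation, whereas you derive the fixed-point equation $(I-T_h)s_h=\overline\partial_\psi^{*-1}(Va)$ and then invert. Your inclusion of the Neumann-series convergence is a welcome addition (the paper defers this to \cite{guillarmou2011identification}), though your exponent bookkeeping is slightly off: one pass through $T_h$ yields $\|T_h\|_{L^p\to L^p}=\mathcal{O}(h^{1/p})$ for $p>2$ and $\mathcal{O}(h^{1/2-\eps})$ on $L^2$ (cf.\ \eqref{Th norm estimate}), not $h^{1/2+\eps}$; and multiplication by $V,V'$ does not itself improve regularity---the $W^{1,p}$ gain comes from $\overline\partial^{-1}:L^p\to W^{1,p}$. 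Neither point affects the contraction for small $h$.
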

	
	\begin{proof}
		Although it follows from \cite{guillarmou2011identification} of \eqref{equ of CGO solutions} that $v$ is a solution, let us verify that for concreteness.
		We compute
		\begin{equation}
			\begin{split}
				Lv-Qv &=2 F_{\overline{A}}^{-1}  \overline{\partial}^\ast\big[ F_{\overline{A}} F^{-1}_A \overline\partial \big( e^{\Phi/h}(a+r_h)\big)\big]\\
				&=2 F_{\overline{A}}^{-1}  \overline{\partial}^\ast \big( F_{\overline{A}} F^{-1}_A e^{\Phi/h} \overline\partial r_h\big)  \\
				&=-2 F_{\overline{A}}^{-1}  \overline{\partial}^\ast\big( F_{\overline{A}} F^{-1}_A e^{\Phi/h}e^{-2\mathsf{i}\psi/h} V' s_h\big).
			\end{split}
		\end{equation}
		We have
		\[
		F_{\overline{A}} F^{-1}_A=e^{\mathsf{i}\overline \alpha}e^{-\mathsf{i}\alpha}=\abs{F_A}^{-2}.
		\]
		Thus
		\begin{equation}
			\begin{split}
				Lv-Qv&=2 F_{\overline{A}}^{-1}  \overline{\partial}^\ast\big(  e^{\Phi/h}e^{-2\mathsf{i}\psi/h}  s_h\big)\\
				&=2 F_{\overline{A}}^{-1}  \underbrace{\overline{\partial}^\ast \big( e^{\overline\Phi/h}  s_h\big) }_{\op^\ast e^{\overline{\Phi/h}}=0}\\
				&=2 F_{\overline{A}}^{-1} e^{\overline\Phi/h} \overline{\partial}^\ast   s_h \\
				&=-2 F_{\overline{A}}^{-1} e^{\overline\Phi/h} \overline{\partial}^\ast   \Big(\op_\psi^{*-1}(Va)+\sum_{j=1}^\infty T_h^j\op_\psi^{*-1}(Va) \Big) \\
				&=-2 F_{\overline{A}}^{-1} e^{\overline\Phi/h}  e^{2\mathsf{i}\psi/h} \Big(Va+V\overline\p_\psi^{-1}V'\sum_{j=1}^\infty T_h^{j-1}\op_\psi^{*-1}(Va) \Big) \\
				&=-2 F_{\overline{A}}^{-1} e^{\Phi/h}V \left(a+r_h \right)-Qv,
			\end{split}
		\end{equation}
		since
		\[
		F_{\overline A}^{-1}\abs{F_A}^{-2}=e^{-\mathsf{i}\overline \alpha}e^{-\mathsf{i}\alpha}e^{\mathsf{i}\overline \alpha}=F_A^{-1}.
		\]
		Thus, $Lv=0$ as claimed.
	\end{proof}

			Let us next recall and derive estimates for the correction term $r_h$. 
			By \cite[Lemma 3.1]{guillarmou2011identification}, we have
			\begin{equation}
				\label{Th norm estimate}
				\begin{split}
					\left\| T_h\right\|_{L^r\to L^r}=\mathcal{O}(h^{1/r}) \quad  \text{and} \quad  \left\| T_h\right\|_{L^2\to L^2}=\mathcal{O}(h^{1/2-\eps}), 
				\end{split}
			\end{equation}
			for any $0<\eps<1/2$. We also have 
			\begin{equation}
				\label{Th norm estimate sobo}
				\begin{split}
					\left\| T_h\right\|_{W^{1,p}\to W^{1,p}}=\mathcal{O}(h^{1/p}) \quad  \text{and} \quad  \left\| T_h\right\|_{W^{1,2}\to W^{1,2}}=\mathcal{O}(h^{1/2-\eps}), 
				\end{split}
			\end{equation}
			for any $0<\eps<1/2$. Indeed, if $f\in W^{1,p}$, we have for $p>2$
			\[
			\big\|\op_\psi^{*-1} V \op_\psi^{-1} V'f\big\|_{W^{1,p}}\lesssim \big\| V \overline{\partial}_\psi^{-1} V'f\big\|_{L^{p}}\lesssim h^{1/p}\norm{f}_{W^{1,p}}
			\]
			by continuity of $\overline \p^{-1}: L^p\to W^{1,p}$  and \eqref{eq:sobo_decay}. For $p=2$ the norm estimate in \eqref{Th norm estimate sobo} follows from the fact that $T_h$ is uniformly bounded  $W^{1,r}$ to $W^{1,r}$, $r<2$ , and standard interpolation result  \cite[Theorem 6.4.5]{BL-76_interpolation}.
			By (the proof of) \cite[Lemma 3.2]{guillarmou2011identification} it then follows that for any $\eps>0$ small enough
			\begin{equation}\label{eq:rh_L2_estim}
				\left\|s_h\right\|_{L^2}+ \left\| r_h\right\|_{L^2}=\mathcal{O}(h^{1/2+\eps}),
			\end{equation}
			and similar to \cite[Section 4.1]{CLT24}, we also have
			\begin{equation}\label{eq:rh_Lp_estim}
				\left\| s_h\right\|_{L^p}+\left\| r_h\right\|_{L^p}=\mathcal{O}(h^{1/p+\eps_p}),
			\end{equation}
			for all $p\geq 2$, where $\eps_p>0$ depends on $p$. Moreover, since $\p\op^{-1}$ is a Calder\'on-Zygmund operators, so one can use the Calder\'on-Zygmund estimate to derive 
			\begin{equation}
				\begin{split}
					\left\| r_h \right\|_{L^p}, \quad \left\| \p r_h \right\|_{L^p}, \quad \left\| \op r_h \right\|_{L^p} =\mathcal{O}( h^{1/p+\eps_p}),
				\end{split}
			\end{equation}
			for all $p\geq 2$, where $\eps_p>0$ depends on $p$.

			Before proceeding, we recall the Calderón–Zygmund estimate for $r_2$ (see, for instance, \cite[Section~3]{LW24_quasi}), which yields
			\begin{equation}\label{CZ estimate for r_2}
				\|H D^2 r_h \|_{L^2} = \mathcal{O}(h^{-1/2+\varepsilon}),
			\end{equation}
			for any sufficiently small $\varepsilon>0$ where $H \in C^2_0(\Omega)$ can be arbitrary.
			To verify \eqref{CZ estimate for r_2}, we apply the Calder\'on–Zygmund inequality (for example, see \cite[Corollary~9.10]{gilbarg2015elliptic}) to the product of $r_h$ with $H$, obtaining
			\begin{equation}\label{eq:second_derivs_of_rh}
				\begin{split}
					\big\| H D^2 r_h \big\|_{L^2} 
					&\le \big\| D^2 (H r_h) \big\|_{L^2}
					+ 2 \big\| \nabla H \otimes \nabla r_h \big\|_{L^2} 
					+ \big\| r_h D^2 H \big\|_{L^2} \\
					&\lesssim \| \partial \bar{\partial} (H r_h) \|_{L^2} 
					+ \mathcal{O}(h^{1/2+\varepsilon}) \\
					&\lesssim \| \partial \big( e^{\mathsf{i}\varphi/h} s_h \big) \|_{L^2} 
					+ \mathcal{O}(h^{1/2+\varepsilon}) \\
					&\lesssim \frac{1}{h} \| s_h \|_{L^2} 
					+ \| \partial s_h \|_{L^2} 
					+ \mathcal{O}(h^{1/2+\varepsilon}) \\
					&= \mathcal{O}(h^{-1/2+\varepsilon}),
				\end{split}
			\end{equation}
			where we have used \eqref{eq:rh_L2_estim} and \eqref{eq: def of sh} to conclude that $\|\partial s_h\|_{L^2} = \mathcal{O}(1)$.

					\subsubsection{CGOs with antiholomorphic phase}
					Next, we construct a CGO solution with an antiholomorphic  phase $-\overline \Phi$, 
					where $\Phi=\varphi+\mathsf{i}\psi$ is a holomorphic Morse function. Since the coefficients of the linear equation \eqref{equ of CGO solutions} are real, we obtain a CGO with an antiholomorphic phase by taking the complex conjugate of the CGO
					\[
					F_A^{-1}e^{-\Phi/h}(a+r_h)
					\]
					given by Lemma \ref{lemma: CGO solution} for the phase $-\Phi$. 
					This gives us a CGO of the form
					\[
					\wt v=F_{\overline A}e^{-\overline \Phi/h}(1+\wt r_h),
					\]
					where $F_{\overline A}=e^{\mathsf{i}\overline \alpha}$ and by \eqref{lemma: CGO solution}
					\begin{equation}\label{eq:formula_for_tilde_rh}
						\wt r_h = -  \p_{\psi}^{-1}V'\sum_{j=0}^\infty \wt T_{h}^j\p_{\psi}^{*-1}(Va).
					\end{equation}
					Here $\psi$ is the imaginary part of $\Phi$ as before and 
					\[
					\wt T_{h}=\p_{\psi}^{*-1}V\p_{\psi}^{-1}V'
					\]
					with 
					\[
					\p_\psi^{-1}:=\mathcal{R}\p^{-1}e^{-2\mathsf{i}\psi/h}\mathcal{E} \quad \text{and}\quad  \p_\psi^{*-1}:=\mathcal{R}\p^{*-1}e^{2\mathsf{i}\psi/h}\mathcal{E}.
					\]
					(see also \cite{CLT24}).
					Here $\p^{*-1}=\overline\p^{-1}$ in holomorphic coordinates. 
					We also write
					\begin{equation}\label{eq: form of tilderh}
						\wt r_h:=-\p_\psi^{-1}V'\wt s_h, \quad \wt s_h:= \sum_{j=0}^\infty \wt T_{h}^j\p_{\psi}^{*-1}(Va).
					\end{equation}
					The remainder $\wt r_h$ enjoys the same estimates as $r_h$ (corresponding to holomorphic Morse phase), hence, we do not repeat those estimates from the previous subsection.
					Note that $\widetilde T_h$ is not exactly the same as $\overline T_h$ since we also changed the sign of $\Phi$. However, $\widetilde T_h$ satisfies the same estimates as $T_h$ given by \eqref{Th norm estimate} and \eqref{Th norm estimate sobo}. %, that is, 

					\subsubsection{Estimates and expansions for CGOs in the absence of critical points}
					Observe that in the above construction, if $\Phi$ has no critical points, we may apply the better estimate \eqref{eq: no critical point expansion III}  throughout the construction to get
					\begin{equation}
						\label{eq: no critical point estimate}
						\|r_h\|_p +\|s_h\|_p+ \|d r_h\|_p \leq Ch,
					\end{equation}
					for all $p\in (1,\infty)$ and for some constant $C>0$. 
					In fact, we have even the following asymptotic expansion formula for the correction terms associated with a phase function that does not have critical points. 
					\begin{lemma}\label{lem: asymptotic of remainders with linear}
						Let $r_h$ and $\tilde r_h$ be as above, and correspond to a holomorphic phase without critical points. Let also $N\in \N$ and $k+l\leq 2$, $p\geq 2$. Then, we can write
						\begin{equation}\label{eq:r_h_expansion}
							r_h=hF_h +\mathcal{O}_{W^{2,p}}(h^N), \quad  \tilde r_h=h \tilde F_h+\mathcal{O}_{W^{2,p}}(h^N),
						\end{equation}
						where $F_h=F_h(x)$ and $\tilde F_h=\tilde F_h(x)$ are finite power series in $h$ with $C^\infty$ smooth coefficients depending only on $x$.
					\end{lemma}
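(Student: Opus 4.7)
The plan is to extract the asymptotic expansion of $r_h$ by iteratively applying the explicit formula \eqref{equ: d-bar inverse of f} (and its counterpart for $\overline\p_\psi^{*-1}$) to each factor of $\overline\p_\psi^{-1}$ and $\overline\p_\psi^{*-1}$ appearing in \eqref{eq: def of rh}--\eqref{def: Th}, and to take advantage of the oscillatory cancellation built into $T_h$.

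First, since $\Phi$ has no critical points, the estimates \eqref{eq: no critical point expansion III} and \eqref{eq: no critical point expansion III adjoint} yield that $\overline\p_\psi^{-1}$ and $\overline\p_\psi^{*-1}$ both gain a factor of $h$ when mapping $W^{1,p}\to W^{1,p}$, so the operator $T_h$ has norm of order $\mathcal{O}(h^2)$ on $W^{1,p}$. Consequently, for any $N$ there exists $J=J(N)$ such that
\[
s_h=\sum_{j=0}^{J} T_h^j\,\overline\p_\psi^{*-1}(Va)+\mathcal{O}_{W^{1,p}}(h^N),
\]
and from this point we work with a finite sum.

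Second, I would expand each iterate using \eqref{equ: d-bar inverse of f} together with its analogue
\[
\overline\p_\psi^{*-1}(g)=e^{2\mathsf{i}\psi/h}\sum_{j=1}^{N+1}h^j\widetilde G^j+h^{N+1}\overline\p_\psi^{*-1}(\p\widetilde G^{N+1}),
\]
with $\widetilde G^j\in C^\infty$ defined iteratively analogously to the $F^j$ in \eqref{equ: d-bar inverse of f}. The crucial structural observation is that $T_h$ pairs one $\overline\p_\psi^{-1}$, which carries a phase $e^{-2\mathsf{i}\psi/h}$, with one $\overline\p_\psi^{*-1}$, which carries the conjugate phase $e^{2\mathsf{i}\psi/h}$; these oscillatory factors cancel, for if $f=e^{-2\mathsf{i}\psi/h}g$ with $g\in C^\infty$ then
\[
\overline\p_\psi^{*-1}(f)=\mathcal{R}\p^{-1}(e^{2\mathsf{i}\psi/h}\mathcal{E}f)=\mathcal{R}\p^{-1}(\mathcal{E}g)\in C^\infty,
\]
a non-oscillatory smooth function. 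Iterating this cancellation, each term $T_h^j\,\overline\p_\psi^{*-1}(Va)$ can be written as $e^{2\mathsf{i}\psi/h}$ times a polynomial in $h$ with smooth coefficients in $x$, up to a controllable remainder. Summing, one obtains
\[
s_h=e^{2\mathsf{i}\psi/h}\sum_{k=1}^{M}h^k S_k(x)+\mathcal{O}_{W^{2,p}}(h^N),
\]
with $S_k\in C^\infty$ and $M=M(N)$ finite. Applying the outer operator $-\overline\p_\psi^{-1}V'$ from \eqref{eq: def of rh} cancels the remaining $e^{2\mathsf{i}\psi/h}$ factor, giving
\[
r_h=-\mathcal{R}\overline\p^{-1}\Big(V'\sum_{k=1}^{M}h^k S_k\Big)+\mathcal{O}_{W^{2,p}}(h^N)=hF_h+\mathcal{O}_{W^{2,p}}(h^N),
\]
where $F_h$ is a finite power series in $h$ with $C^\infty$ coefficients. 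The statement for $\widetilde r_h$ follows by running the identical argument after interchanging $\p$ and $\overline\p$, starting from \eqref{eq: form of tilderh}.

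The main obstacle will be upgrading the remainder estimate from $L^p$ (or $W^{1,p}$) to $W^{2,p}$. Each derivative applied to an uncancelled oscillatory factor $e^{\pm 2\mathsf{i}\psi/h}$ costs $h^{-1}$, so a naive truncation of each expansion at order $N$ only gives $\mathcal{O}(h^{N-2})$ in $W^{2,p}$, analogous to the loss visible in the Calder\'on--Zygmund computation \eqref{CZ estimate for r_2}. I would overcome this by carrying each expansion of $\overline\p_\psi^{\pm-1}$ two orders further than needed in $L^p$, exploiting the smoothness of $V$, $V'$ and $a$, and then bounding the final (non-oscillatory after cancellation) remainder by elliptic regularity for $\overline\p^{-1}$ as in \eqref{eq:second_derivs_of_rh}. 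The bookkeeping of the surviving oscillatory factors through each iteration, and verifying that all remainders are eventually of the form $\overline\p^{-1}$ of a compactly supported $W^{N,p}$ function, is the technical heart of the argument.
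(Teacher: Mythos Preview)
Your approach is essentially the same as the paper's: truncate the Neumann series for $s_h$, expand each factor of $\overline\p_\psi^{-1}$ and $\overline\p_\psi^{*-1}$ via \eqref{equ: d-bar inverse of f}, and exploit the cancellation of the phases $e^{\pm 2\mathsf{i}\psi/h}$ that come from the two halves of $T_h$. One technical slip: the estimates \eqref{eq: no critical point expansion III}--\eqref{eq: no critical point expansion III adjoint} give $W^{1,p}\to L^p$ with an $h$-gain, not $W^{1,p}\to W^{1,p}$ (the surviving oscillatory factor $e^{\pm 2\mathsf{i}\psi/h}$ costs $h^{-1}$ upon differentiation), so $T_h$ is only $\mathcal{O}(h^{1/p})$ on $W^{1,p}$ as in \eqref{Th norm estimate sobo}, not $\mathcal{O}(h^2)$; this does not matter for the argument, since any decaying norm suffices to truncate to a finite sum, and the paper handles the $W^{2,p}$ upgrade exactly as you anticipate, via $\overline\p_\psi^{-1}V':W^{1,p}\to W^{2,p}$ with norm $\mathcal{O}(h^{-1})$ and pushing the expansion one order further.
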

					% \begin{lemma}\label{lem: asymptotic of remainders with linear}
						% 	Let $r_h$ and $\tilde r_h$ be as above, and correspond to a holomorphic phase without critical points. Let also $N\in \N$ and $k+l\leq 2$, $p\geq 2$. Then, we can write
						% 	\begin{equation}\label{eq:r_h_expansion}
							% 		\p^k\overline \p^l r_h=hF_h +\mathcal{O}_{L^p}(h^N), \quad  \p^k\overline \p^l \tilde r_h=h \tilde F_h+\mathcal{O}_{L^p}(h^N),
							% 	\end{equation}
						% 	where $F_h=F_h(x)$ and $\tilde F_h=\tilde F_h(x)$ are finite power series in $h$ with $C^\infty$ smooth coefficients depending only on $x$.
						% \end{lemma}
					%The proof follows from the above computations by redefining $N$ as $N+k+l$ there. 
					This lemma will be extremely useful when analyzing the integral identity of the second linearized Monge-Amp\`ere equation. The lemma implies that correction terms of CGOs that have phases without critical points can be disregarded as lower order terms in the asymptotic analysis due to the term $hF_f$ (or $h\wt F_h$).
					\begin{proof}[Proof of Lemma \ref{lem: asymptotic of remainders with linear}]
						Let $N\in \N$. We have
						\[
						T_h:W^{1,p}\to W^{1,p}
						\]
						has an operator norm $\mathcal{O}(h^{1/p})$ for $p>2$ and $\mathcal{O}(h^{1/2-\eps})$ for $p=2$ by \eqref{Th norm estimate sobo}, where $T_h$ is defined by \eqref{def: Th}. 
						%                Indeed, if $f\in W^{1,p}$, we have
						% \[
						% \big\| T_hf\big\|_{W^{1,p}}=\big\| \op_\psi^{*-1} V \overline{\partial}_\psi^{-1} V'f\big\|_{W^{1,p}}\lesssim \big\| V \overline{\partial}_\psi^{-1} V'f\big\|_{L^{p}}\lesssim h^{1/p}\norm{f}_{W^{1,p}}
						% \]
						% by\f{add eqns} for $p>2$, and for $p=2$\f{Need to interpolate. Add later.} 
						We also note that
						\[
						\overline{\p}_\psi^{-1}V':W^{1,p}\to W^{2,p}
						\]
						with 
						\[
						\big\|\overline{\p}_\psi^{-1}V'f\big\|_{W^{2,p}}=\big\|\overline{\p}^{-1}(e^{-2i\psi/h}V'f)\big\|_{W^{1,p}}\lesssim h^{-1}\norm{f}_{W^{1,p}},
						\]
						where we apply \cite[Proposition 2.3]{tzou2017reflection} with $\op^{-1}:W^{1,p}\to W^{2,p}$.
						Recall also that $\op_\psi^{*-1}V$ is uniformly bounded from $L^p$ to $W^{1,p}$, using the above facts, then there is $K=K_N\in \N$ such that 
						\begin{equation}
							\begin{split}
								r_h&=-\overline{\p}_\psi^{-1}V'\sum_{k=0}^\infty T_h^k\op_\psi^{*-1}(Va)\\
								&=-\overline{\p}_\psi^{-1}V'\Big(\sum_{k=0}^K T_h^k\op_\psi^{*-1}(Va)+ \mathcal{O}_{W^{1,p}}(h^{N})\Big)\\
								&=-\overline{\p}_\psi^{-1}V'\sum_{k=0}^K T_h^k\op_\psi^{*-1}(Va)+h^{-1}\mathcal{O}_{W^{2,p}}(h^{N}).
							\end{split}
						\end{equation}
						Thus, it remains to analyze the finite sum above. 
						
						We expand using \cite[Lemma~3.5]{CLLT2023inverse_minimal} as
						\begin{equation}
							\begin{split}
								\op_\psi^{*-1}(V a) 
								&= e^{2\mathsf{i}\psi/h} V' \sum_{j=1}^{N+1} h^j F^j + h^{N+1} \partial^{-1}_\psi (\partial F^{N+1}), 
								%       \\
								% &= \overline{\partial}^{-1} \Big( \sum_{j=1}^{N+1} h^j V' F^j \Big) 
								% + h^{N+1} \overline{\partial}_\psi^{-1} V' \partial^{-1}_\psi (\partial F^{N+1}) \\
								% &=\overline{\partial}^{-1} \Big( \sum_{j=1}^{N+1} h^j V' F^j \Big)+h^{N+1} O_{W^{2,p}}(h^{-1})
							\end{split}
						\end{equation}
						where the functions \( F^j \in C^\infty(M) \) are defined recursively by
						\[
						F^1 = \frac{\mathsf{i}}{2} \frac{V a}{\partial \psi}, \quad 
						F^{j+1} = \frac{\mathsf{i}}{2} \frac{1}{\partial \psi} \partial F^j.
						\]
						Since $\psi$ has no critical points, these functions are smooth.
						%Here in the last equality we also used $\overline{\partial}_\psi^{-1} V' \partial^{-1}_\psi:L^{p}\to W^{2,p}$ is bounded with norm $O(h^{-1})$.
						%This expansion holds for any integer $N \in \N$. This gives the correct expansion for the term $k=0$ in \eqref{}.
						%For $k>0$, we compute similary. 
						In the following, we denote by
						\[
						\check{F}^j, \quad \check{F}^{jk}, \quad \text{etc.}
						\]
						unspecified smooth functions in \( C^\infty \), which may vary from line to line.

						For each \( k \geq 0 \), we have:
						\begin{equation}
							\begin{split}
								T_h^k \big( \op_\psi^{*-1}(V a)\big)
								&= \big( \op_\psi^{*-1} V \overline{\partial}_\psi^{-1} V' \big)^k 
								\big( \op_\psi^{*-1}(V a) \big) \\
								&= \big( \op_\psi^{*-1} V \overline{\partial}_\psi^{-1} V' \big)^k 
								\Big( e^{2\mathsf{i}\psi/h} \sum_{j=1}^{N+1} h^j F^j + h^{N+1} \partial^{-1}_\psi(\partial F^{N+1}) \Big) \\
								&= \big( \op_\psi^{*-1} V \overline{\partial}_\psi^{-1} V' \big)^k 
								\Big( e^{2\mathsf{i}\psi/h} \sum_{j=1}^{N+1} h^j F^j\Big) + h^{N+1} \mathcal{O}_{W^{1,p}}(1).
							\end{split}
						\end{equation}
						Now
						\begin{equation}
							\begin{split}
								&\quad \, \big( \op_\psi^{*-1} V \overline{\partial}_\psi^{-1} V' \big)^k 
								\Big( e^{2\mathsf{i}\psi/h} \sum_{j=1}^{N+1} h^j \check{F}^j\Big) \\
								&=\sum_{j=1}^{N+1} h^j\big( \op_\psi^{*-1} V \overline{\partial}_\psi^{-1} V' \big)^{k-1}\big( \op_\psi^{*-1} V \overline{\partial}_\psi^{-1} V' \big) e^{2\mathsf{i}\psi/h}\check{F}^j \\
								&=\sum_{j=1}^{N+1} h^j\big( \op_\psi^{*-1} V \overline{\partial}_\psi^{-1} V' \big)^{k-1} \op_\psi^{*-1} \check{F}^j \\
								&=\sum_{j=1}^{N+1} h^j\big( \op_\psi^{*-1} V \overline{\partial}_\psi^{-1} V' \big)^{k-1} (e^{2\mathsf{i}\psi/h} \sum_{l=1}^{N+1} h^l \check{F}^{jl} + h^{N+1} \partial^{-1}_\psi(\partial \check F^{jN+1}))\\
								&=\sum_{j,l=1}^{N+1} h^jh^l\big( \op_\psi^{*-1} V \overline{\partial}_\psi^{-1} V' \big)^{k-1} (e^{2\mathsf{i}\psi/h} \check{F}^{jl})+h^{N+1} \mathcal{O}_{W^{1,p}}(1)\\
								&=\cdots= e^{2\mathsf{i}\psi/h} \sum_{j_1, \ldots, j_{k+1}=1}^{N+1} h^{j_1 + \cdots + j_{k+1}} \check{F}^{j_1 \cdots j_{k+1}} +h^{N+1} \mathcal{O}_{W^{1,p}}(1).
							\end{split}
						\end{equation}
						Thus 
						\begin{equation}
							\begin{split}
								r_h&=-\overline{\p}_\psi^{-1}V'\sum_{k=0}^K T_h^k\op_\psi^{*-1}(Va)+h^{-1}\mathcal{O}_{W^{2,p}}(h^{N}) \\
								&=-\sum_{k=0}^K\overline{\p}_\psi^{-1}V'\Big(e^{2\mathsf{i}\psi/h} \sum_{j_1, \ldots, j_{k+1}=1}^{N+1} h^{j_1 + \cdots + j_{k+1}} \check{F}^{j_1 \cdots j_{k+1}}  +h^{N+1} \mathcal{O}_{W^{1,p}}(1)\Big)\\
								&\quad \, +h^{-1}\mathcal{O}_{W^{2,p}}(h^{N}) \\
								&=\sum_{k=0}^K\sum_{j_1, \ldots, j_{k+1}=1}^{N+1} h^{j_1 + \cdots + j_{k+1}} \check{F}^{j_1 \cdots j_{k+1}} +h^{N} \mathcal{O}_{W^{2,p}}(1)+h^{-1}\mathcal{O}_{W^{2,p}}(h^{N}).
							\end{split}
						\end{equation}
						
						The decrease of the pover of $h$ in the middle term resulted from $\overline{\p}_\psi^{-1}:W^{1,p}\to W^{2,p}$ with norm $\mathcal{O}(h^{-1})$. Redefining $N$ as $N+1$ yields the first identity in \eqref{eq:r_h_expansion}. The proof of the second identity is similar.  
					\end{proof}

					To conclude this subsection, and for the readers’ convenience in the forthcoming analysis, we now summarize all the CGO solutions introduced above.
					
					\begin{lemma}[CGO solutions]\label{lemma: CGO solution summarize}
						~\begin{enumerate}[(i)]
							\item  There exist CGO solutions with holomorphic phases
							\begin{equation}
								F_{A_1}^{-1}e^{\Phi_1/h}(1+r_1) \quad \text{and}\quad  F_{A^\ast}^{-1} e^{\Phi^\ast/h}(1+r_\ast)
							\end{equation}
							to the equations \eqref{linear general} and \eqref{linear adjoint general}, respectively, where $\Phi_1$ and $\Phi^\ast$ are holomorphic functions without critical points. Here, $F_{A_1}^{-1}$ and $F_{A^\ast}^{-1}$ are smooth non-vanishing function independent of $h>0$, and $r_1$ and $r_\ast$ are remainders fulfilling \eqref{eq:r_h_expansion}.
							
							\item There is a CGO solution with an antiholomorphic phase 
							\begin{equation}
								F_{\overline{A}_2}e^{-\overline{\Phi}_2/h}(1+\wt r_2)
							\end{equation}
							to the equation \eqref{linear general}, where $\Phi_2$ is a holomorphic Morse function with critical points. Here, $F_{\overline{A_2}}$ is a smooth non-vanishing function independent of $h>0$, and $\wt r_2$ is the remainder fulfilling \eqref{eq: form of tilderh}.
						\end{enumerate}
						
					\end{lemma}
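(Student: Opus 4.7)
The plan is to assemble the lemma directly from the constructions carried out in the preceding subsections; essentially no new analytic work is required, only bookkeeping of which ingredient supplies which CGO.

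For part (i), I would first pick a holomorphic phase $\Phi_1$ with no critical points on the closure of $\widetilde \Omega$; the simplest choice is a non-constant affine function $\Phi_1(z) = \alpha z + \beta$ with $\alpha \neq 0$, which is automatically Morse-free on any bounded set. Applying Lemma~\ref{lemma: CGO solution} with the vector field $X = \mathbf X_g$ appearing in \eqref{equ: isothermal 2}, amplitude $a = 1$, and the factorization \eqref{eq:L_magnetic form} produces the CGO $F_{A_1}^{-1} e^{\Phi_1/h}(1+r_1)$ solving \eqref{linear general}. For the adjoint CGO, I would repeat the same argument after noting that the adjoint operator $-\Delta_g - \nabla\cdot(X\,\cdot\,)$ has the form $-\Delta_g - X\cdot\nabla - (\nabla\cdot X)$, which is again a Laplacian plus drift plus potential, and hence admits the magnetic Schrödinger factorization \eqref{eq:L_magnetic form} with an appropriately modified vector potential $A^\ast$ and potential $Q^\ast$. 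Applying Lemma~\ref{lemma: CGO solution} to this operator with a second critical-point-free holomorphic phase $\Phi^\ast$ yields $F_{A^\ast}^{-1} e^{\Phi^\ast/h}(1+r_\ast)$. The smoothness and nonvanishing of $F_{A_1}^{-1}$ and $F_{A^\ast}^{-1}$ follow directly from their definition $F_A = e^{\mathsf{i}\alpha}$ with $\alpha$ solving $\overline\partial \alpha = A$ on a slightly larger domain via \eqref{eq:inverse_of_op}.

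For part (ii), I would pick a holomorphic Morse function $\Phi_2$ on $\widetilde\Omega$ — such functions exist in abundance on simply connected planar domains, for instance $\Phi_2(z) = z^2/2 + \text{const}$, whose unique critical point can be translated inside $\widetilde\Omega$. Applying Lemma~\ref{lemma: CGO solution} to the equation \eqref{linear general} with phase $-\Phi_2$ gives a CGO $F_{A_2}^{-1}e^{-\Phi_2/h}(1 + r_{2,h})$. Since the coefficients of \eqref{linear general} are real-valued, taking the complex conjugate of this solution produces another solution of the same equation, now with antiholomorphic phase $-\overline{\Phi}_2$. Tracking the conjugation through the definitions $F_{\overline A_2} = \overline{F_{A_2}^{-1}}$ (using $F_A = e^{\mathsf{i}\alpha}$ and $\overline\alpha$ solving $\partial\overline\alpha = \overline A$) and $\wt r_2 = \overline{r_{2,h}}$ yields exactly the form $F_{\overline A_2} e^{-\overline\Phi_2/h}(1+\wt r_2)$ with $\wt r_2$ given by \eqref{eq: form of tilderh}.

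The only mildly delicate point, and the one I would check most carefully, is that the remainder estimates and the polynomial-in-$h$ expansion \eqref{eq:r_h_expansion} carry over to the adjoint CGO in part (i); this is not completely routine because the adjoint operator introduces an extra zeroth-order term $\nabla \cdot X$ that feeds into the potential $Q^\ast$. However, Lemma~\ref{lemma: CGO solution} and Lemma~\ref{lem: asymptotic of remainders with linear} were formulated to accommodate an arbitrary smooth $q$ in the operator $L = -\Delta_g + X\cdot\nabla + q$, so their conclusions apply verbatim once $q = -\nabla\cdot X$ is absorbed into $Q^\ast$. With these verifications, the lemma is proved by a direct citation of the earlier results.
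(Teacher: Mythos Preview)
Your proposal is correct and matches the paper's approach exactly: the lemma is a summary with no separate proof in the paper, and the ingredients you cite---Lemma~\ref{lemma: CGO solution} applied to both the operator and its adjoint (the latter absorbed into the general form $-\Delta_g + X\cdot\nabla + q$), complex conjugation for the antiholomorphic CGO using that the coefficients are real, and Lemma~\ref{lem: asymptotic of remainders with linear} for the expansion \eqref{eq:r_h_expansion}---are precisely the preceding constructions the paper intends the reader to assemble. Your observation about the zeroth-order term in the adjoint being handled by the general $q$ in \eqref{eq:L_magnetic form} is exactly the point the paper anticipates by writing ``We will apply the above in the case $q=0$, but we include the general case for future reference.''
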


					In Section~\ref{sec: det of conformal factor}, we will carefully select these phase functions $\Phi_1$, $\Phi_2$, and $\Phi_\ast$ to recover an unknown conformal factor $c$ uniquely.

					\section{Carleman estimate and unique continuation}\label{sec: Carleman and UCP}
					In this section, $A$ is a non-vanishing, possibly complex-valued function. We 
					prove a unique continuation principle (UCP) for solutions of
					%                \begin{equation}\label{eq:ucp_equation2}
						% 	\alpha_2(z)\op^2 \mathbf{c}(z)+\alpha_1(z)\op\mathbf{c}(z)+\alpha_0\mathbf{c}(z)+\alpha_{-1}(z)\op^{-1}(\gamma(z) \mathbf{c}(z)) = H(z),
						% \end{equation}
					%                where $\alpha_2$ is complex valued and non-zero and $H$ is holomorphic.
					\begin{equation}\label{eq:ucp_equation}
						\op (A\op \mathbf{c}(z)+\alpha(z)\mathbf{c}(z)) =\beta(z)\op^{-1}(\gamma(z) \mathbf{c}(z)) + H,
					\end{equation}
					where $\op^{-1}$ is the standard Cauchy-Riemann integral operator and $H$ is a holomorphic function.
					We state it  as follows 
					
					\begin{lemma}[Unique continuation property]\label{lemma: UCP with Carleman}
						Let $U\subset \R^2$ be a bounded connected open set with $C^\infty$-boundary $\p U$, and $\mathbf{c}$  a $C^2$-solution to \eqref{eq:ucp_equation}. Let $A\in C^2(U)$ be a non-vanishing function and $\alpha,\beta,\gamma \in C^\infty(U)$. Given a nonempty open subset $W\subset U$, then $\mathbf{c}=0$ on  $W$ implies $\mathbf{c}=0$ in $\overline{U}$.
					\end{lemma}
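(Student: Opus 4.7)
The plan is to convert the nonlocal equation \eqref{eq:ucp_equation} into a purely local coupled elliptic system by introducing an auxiliary unknown, and then to establish UCP through chained Carleman estimates followed by the standard propagation-of-zeros argument. Setting $u := \op^{-1}(\gamma \mathbf{c})$, so that $\op u = \gamma \mathbf{c}$ in $U$, and using the hypothesis that $H$ is holomorphic ($\op H = 0$), applying $\op$ to both sides of \eqref{eq:ucp_equation} and invoking $\op\op^{-1} = \mathrm{id}$ yields the local system
\[
\begin{cases}
\op^2(A\op \mathbf{c} + \alpha \mathbf{c}) - \beta\gamma \mathbf{c} - (\op\beta)\, u = 0, \\
\op u - \gamma \mathbf{c} = 0,
\end{cases}
\quad \text{in } U.
\]
The assumption $\mathbf{c} \equiv 0$ on $W$ gives $\op u \equiv 0$ on $W$, so $u$ is holomorphic on $W$, but not necessarily zero there.

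Next, I would derive iterated Carleman estimates with a weight $e^{\tau\varphi}$ (either a linear weight with $\op\varphi \neq 0$, or a strongly convex weight localised around a chosen point). From the basic first-order estimate $\tau^{1/2}\|e^{\tau\varphi}w\|_{L^2} \lesssim \|e^{\tau\varphi}\op w\|_{L^2}$ valid for compactly supported $w$, iteration yields in particular
\[
\tau^{3/2}\|e^{\tau\varphi} v\|_{L^2} + \tau^{1/2}\|e^{\tau\varphi}\op^2 v\|_{L^2} \lesssim \|e^{\tau\varphi}\op^3 v\|_{L^2}.
\]
Applying this $\op^3$-estimate to $v = \chi \mathbf{c}$ for a cutoff $\chi$, inverting the first equation via the non-vanishing of $A$ to write $\op^3\mathbf{c}$ as a sum of lower-order derivatives of $\mathbf{c}$ plus a multiple of $u$, and absorbing the intermediate $\op^j\mathbf{c}$ terms ($j\leq 2$) using the corresponding $\tau$-powers on the left-hand side, produces
\[
\tau^{3/2}\|e^{\tau\varphi}\chi\mathbf{c}\|_{L^2} \lesssim \|e^{\tau\varphi}\chi u\|_{L^2} + \mathcal{E},
\]
where $\mathcal{E}$ collects commutator terms supported on $\operatorname{supp}\nabla\chi$. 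Applying the basic $\op$-estimate to $\chi u$ and using the second equation gives $\|e^{\tau\varphi}\chi u\|_{L^2} \lesssim \tau^{-1/2}\|e^{\tau\varphi}\chi\mathbf{c}\|_{L^2} + \mathcal{E}'$. Chaining the two bounds yields
\[
(\tau^{3/2} - C\tau^{-1/2})\|e^{\tau\varphi}\chi\mathbf{c}\|_{L^2} \lesssim \mathcal{E} + \tau^{-1/2}\mathcal{E}',
\]
so that for $\tau$ large enough the left-hand side is effectively $\tau^{3/2}\|e^{\tau\varphi}\chi\mathbf{c}\|_{L^2}$.

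The UCP then follows by the classical propagation-of-zeros argument. Choosing $\chi$ and $\varphi$ so that $\operatorname{supp}\nabla\chi \subset W \cup \{\varphi \leq \varphi_1 - \delta\}$ while $\{\chi = 1\} \supset \{\varphi \geq \varphi_1\}$, the commutator contributions in $\mathcal{E}, \mathcal{E}'$ vanish on $W$ (where $\mathbf{c}\equiv 0$) and elsewhere are weighted by the factor $e^{\tau(\varphi_1 - \delta)}$, which is exponentially smaller than the LHS weight $e^{\tau\varphi_1}$. Letting $\tau \to \infty$ forces $\mathbf{c} \equiv 0$ on $\{\varphi \geq \varphi_1\}$. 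A chain of such local steps---obtained by translating or rotating the linear weight, or by sliding a strongly convex weight across $U$---propagates the vanishing to every point of the connected set $U$.

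The principal obstacle is the chaining of the two Carleman estimates: the $\tau^{3/2}$ gain from the third-order estimate on $\mathbf{c}$ must dominate the $\tau^{-1/2}$ loss from the first-order estimate on $u$, leaving only a narrow margin that must still absorb errors from the variable coefficients $A, \alpha, \beta, \gamma$ and from the fact that $u$ is not zero but merely holomorphic on $W$. The holomorphicity of $H$ is precisely what makes this scheme viable: it permits the removal of $H$ by applying $\op$, producing the homogeneous local system above; were $\op H \neq 0$, the RHS of the Carleman estimate would carry an uncontrolled term $\|e^{\tau\varphi}\op H\|_{L^2}$ growing exponentially in $\tau$ and the argument would collapse, consistent with the paper's remark that the UCP holds only under the holomorphicity assumption on $H$.
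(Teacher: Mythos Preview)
Your localization-by-differentiation idea is reasonable, but the execution has a genuine gap at the propagation step. You correctly observe that $u := \op^{-1}(\gamma\mathbf{c})$ is only holomorphic on $W$, not zero; yet you then assert that the commutator contributions $\mathcal{E}'$ ``vanish on $W$ (where $\mathbf{c}\equiv 0$)''. They do not: $\mathcal{E}'$ contains $\|e^{\tau\varphi}(\op\chi)u\|_{L^2}$, and on $\supp\nabla\chi \cap W$ this is nonzero with a weight that, for a linear or generic convex $\varphi$, is at least as large as the weight on the region where you hope to conclude $\mathbf{c}=0$. No amount of $\tau$-gain in the chained estimates controls a commutator carrying the \emph{same} exponential growth as the left-hand side. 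The holomorphicity of $H$ lets you remove $H$ by applying $\op$, but it does nothing about $u$ on $W$.

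The paper takes a different route that sidesteps this obstruction. It does \emph{not} differentiate to localize; instead it applies a two-parameter Carleman estimate $\|e^{-\tau\phi_\eps}v\|^2 \leq C\eps\,\|e^{-\tau\phi_\eps}\op v\|^2$ three times in succession, with the convexified logarithmic weight $\phi_\eps = \log|z|^2 - \tfrac{1}{2\eps\tau}|z|^2$ centered at a point of the known-zero ball. After a preliminary conjugation that reduces the principal part to $\op(e^{-\theta}\op\,\cdot\,)$, the first two applications pass through the second-order operator; the third acts on $\chi\big(\op^{-1}(\gamma u)+H\big)$ directly, and since $\op\big(\op^{-1}(\gamma u)+H\big)=\gamma u$ (this is where $\op H=0$ enters), the nonlocal term collapses to a local one carrying an extra factor $\eps^2$, which is then absorbed into the left-hand side by choosing $\eps$ small. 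Crucially, the cutoff is taken with $\chi\equiv 1$ on $B(0,r+\delta)$ and $\supp\op\chi \subset \{|z|\geq r+\delta\}$, so \emph{every} commutator term sits entirely in the small-weight region $|z|^{-2\tau}\leq (r+\delta)^{-2\tau}$ and is controlled by crude $L^\infty$ bounds on $u$, $\op u$, and $\op^{-1}(\gamma u)+H$; no vanishing on $W$ is ever invoked. Sending $\tau\to\infty$ with $\eps$ fixed then yields the contradiction. Your scheme could plausibly be repaired by adopting this radial weight and cutoff geometry (so that $\supp\nabla\chi$ has no $W$-piece at all), but as written, with the linear-weight geometry and the claimed vanishing on $W$, the argument does not close.
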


					We prove the above lemma by applying a two-parameter Carleman estimate (see \cite[Lemma~3.2]{guillarmou2011calderon}):
					\begin{equation}\label{eq:2_parameter_carleman}
						\norm{e^{-\tau\phi_\eps}v}_{L^2(U)}^2
						\leq C\eps \norm{e^{-\tau \phi_\eps}\op v}_{L^2(U)}^2 ,
					\end{equation}
					for all $v\in C_c^\infty(U)$, where $C>0$ is a constant independent of $\eps,\tau, v$, and 
					\begin{equation}\label{phi_epsilon (z)}
						\phi_\eps(z)=\varphi(z)-\frac{1}{2\eps\tau}\abs{z}^2,
					\end{equation}
					with $\varphi$ a harmonic function (such estimates are often referred to as Carleman estimates with convexified weights).  
					
					Let us choose
					\begin{equation}\label{varphi-harmonic}
						\varphi(z)=\log(|z|^2),
					\end{equation}
					which is harmonic away from $z=0$, blows up at the origin, and hence allows us to apply the Carleman estimate on an annulus.  
					This yields the (weak) UCP for equation~\eqref{eq:ucp_equation}.  
					
					Note that $\tau \phi_\eps=\tau \varphi - \tfrac{1}{2\eps}\abs{z}^2$, so the weight function involves two independent parameters, $\eps$ and $\tau$.  
					Concretely, the small parameter $\eps$ is first used to absorb lower-order terms, after which the large parameter $\tau$ is employed to establish the UCP. We want to emphasize that Lemma \ref{lemma: UCP with Carleman} holds only when $H$ is a holomorphic function; otherwise, the result may not hold.

					\begin{proof}[Proof of Lemma \ref{lemma: UCP with Carleman}]
						Without loss of generality, we may assume that the equation \eqref{eq:ucp_equation} holds in a ball of radius $R$ and that $\mathbf{c}=0$ on a ball of radius $r<R$.  We show that $\mathbf{c}$ then vanishes on the larger ball $B(0,r+\delta)$, for any $\delta\in (0,R-r)$, and this implies $u=0$ in $B(0,R)$.

						First, we use conjugation for the term $A\op \mathbf{c}(z)+\alpha(z)\mathbf{c}(z)$. For this, let $\theta$ solve 
						\[
						\op\theta =A^{-1}(\alpha-\op A).
						\]
						Such $\theta$ exists due to the existence of $\op^{-1}$ operator (for example, see \eqref{eq:inverse_of_op}). 
						Consider the function $\widetilde{\mathbf{c}}:=e^{\theta} A\mathbf{c}$, then $\widetilde{\mathbf{c}}$ satisfies
						\begin{equation}
							\begin{split}
								\op\widetilde{\mathbf{c}}= e^\theta \big( A\op \mathbf{c} + \op \theta A\mathbf{c} + \op A \mathbf{c}\big) =e^{\theta}\big( A\op \mathbf{c} +\alpha\mathbf{c}  \big),
							\end{split}
						\end{equation}
						which is equivalent to
						\[
						e^{-\theta}\op \widetilde{\mathbf{c}}=A\op \mathbf{c} + \alpha \mathbf{c} .
						\]
						Using this, we can transform the equation \eqref{eq:ucp_equation} into 
						\begin{equation}\label{eq:ucp_equation_conjugated}
							\op \big( e^{-\theta} \op\widetilde{\mathbf{c}} \big) =\beta\op^{-1}(\gamma A^{-1}e^{-\theta}\widetilde{\mathbf{c}}) +H ,
						\end{equation}
						and we aim to prove unique continuation for the above equation. To simplify the notation, we set
						\[
						u=\widetilde{\mathbf{c}},
						\]
						and redefine $\gamma$ as $\gamma A^{-1}e^{-\theta}$ in the rest of the proof.

						With these notations, we prove UCP for the equation of the form 
						\begin{equation}\label{eq:ucp_equation_conjugated_redefined}
							\op (e^{-\theta} \op u) =\beta \op^{-1}(\gamma u) +H.
						\end{equation}
						Let us then start to estimate. Let $\chi\in C_c^\infty(B(0,R))$ and recall that $u$ vanishes on a ball of radius $r$. Thus $\chi u$ is supported on an annulus 
						\[
						A=B(0,R)\setminus B(0,r)
						\]
						(since $u=0$ in $B(0,r)$ by assumption), and the Carleman estimate \eqref{eq:2_parameter_carleman} holds for the domain $A$.  We will choose $\chi$ more precisely later.  By using the Carleman estimate consecutively, we have
						\begin{equation}\label{eq:estimate1_for_ucp}
							\begin{split}
								\eps^{-1} \norm{e^{-\tau\phi_\eps}\chi u}_{L^2(\C)}^2 								&=\eps^{-1} \norm{e^{-\tau\phi_\eps}\chi u}_{L^2(A)}^2\\
								&\lesssim \underbrace{\norm{e^{-\tau \phi_\eps}\op (\chi u)}_{L^2(A)}^2 }_{\text{By \eqref{eq:2_parameter_carleman}}}\\
								&\lesssim \norm{e^{-\tau \phi_\eps}(\op \chi) u)}_{L^2(A)}^2+\norm{e^{-\tau \phi_\eps} \chi \op u)}_{L^2(A)}^2 \\
								&=\norm{e^{-\tau \phi_\eps}(\op \chi) u)}_{L^2(A)}^2+\norm{e^{-\tau \phi_\eps} \chi e^\theta (e^{-\theta} \op u)}_{L^2(A)}^2 \\
								&\lesssim \norm{e^{-\tau \phi_\eps}(\op \chi) u)}_{L^2(A)}^2+\norm{e^{-\tau \phi_\eps} (\chi  e^{-\theta} \op u)}^2 \\
								&\lesssim \norm{e^{-\tau \phi_\eps}(\op \chi) u)}_{L^2(A)}^2+ \underbrace{\eps\norm{e^{-\tau \phi_\eps} \op(\chi  e^{-\theta} \op u))}_{L^2(A)}^2 }_{\text{By \eqref{eq:2_parameter_carleman}}}\\
								&\lesssim \norm{e^{-\tau \phi_\eps}(\op \chi) u}_{L^2(A)}^2+\eps\norm{e^{-\tau \phi_\eps} (\op\chi)  e^{-\theta} \op u}_{L^2(A)}^2\\
								&\quad \, +\eps\norm{e^{-\tau \phi_\eps}\chi  \op( e^{-\theta} \op u)}_{L^2(A)}^2.
							\end{split}
						\end{equation}
						Here $\lesssim$ refers to an inequality with unspecified constants independent of $\tau$ and $\eps$.

						Next, let us insert the equation \eqref{eq:ucp_equation_conjugated_redefined} to the last term in the right-hand side of \eqref{eq:estimate1_for_ucp}. Then we have 
						\begin{equation}\label{eq:estimate2_for_ucp}
							\begin{split}
								&\quad \, \norm{e^{-\tau \phi_\eps}(\op \chi) u}_{L^2(A)}^2+\eps\norm{e^{-\tau \phi_\eps} (\op\chi)  e^{-\theta} \op u}_{L^2(A)}^2+\eps\norm{e^{-\tau \phi_\eps}\chi (  \beta\op^{-1}(\gamma u)+H)}_{L^2(A)}^2 \\
								&\lesssim \norm{e^{-\tau \phi_\eps}(\op \chi) u}_{L^2(A)}^2+\eps\norm{e^{-\tau \phi_\eps} (\op\chi)  \op u}_{L^2(A)}^2+\eps\norm{e^{-\tau \phi_\eps}\chi (  \op^{-1}(\gamma u)+H)}_{L^2(A)}^2 \\
								&\lesssim \norm{e^{-\tau \phi_\eps}(\op \chi) u}_{L^2(A)}^2+\eps\norm{e^{-\tau \phi_\eps} (\op\chi)   \op u}_{L^2(A)}^2+ \underbrace{\eps^2\norm{e^{-\tau \phi_\eps}\op(\chi ( \op^{-1}(\gamma u)+H))}_{L^2(A)}^2 }_{\text{By \eqref{eq:2_parameter_carleman}}}\\
								&\lesssim \norm{e^{-\tau \phi_\eps}(\op \chi) u}_{L^2(\C)}^2+\eps\norm{e^{-\tau \phi_\eps} (\op\chi)  \op u}_{L^2(\C)}^2+\eps^2\norm{e^{-\tau \phi_\eps}(\op\chi)  (\op^{-1}(\gamma u)+H)}_{L^2(\C)}^2 \\
								&\quad \, +\eps^2\norm{e^{-\tau \phi_\eps}\chi  u}_{L^2(\C)}^2,
							\end{split}
						\end{equation}
						where we crucially used the fact that $\op H=0$ since $H$ is holomorphic.
						We will denote $\norm{\ccdot}_{L^2(\C)}$ by $\norm{\ccdot}_{L^2}$ from now on. In short, the above estimates \eqref{eq:estimate1_for_ucp} and \eqref{eq:estimate2_for_ucp}  mean that there is $C>0$ independent of $\tau$ and $\eps$, such that
						\begin{equation}\label{UCP derive 1}
							\begin{split}
								\eps^{-1} \norm{e^{\tau\phi_\eps}\chi u}_{L^2}^2&\leq C\big( \norm{e^{-\tau \phi_\eps}(\op \chi) u}_{L^2}^2+\eps\norm{e^{-\tau \phi_\eps} (\op\chi)   \op u}^2 \\
								&\qquad \quad  +\eps^2\norm{e^{-\tau \phi_\eps}(\op\chi) ( \op^{-1}(\gamma u)+H) }_{L^2}^2  +\eps^2\norm{e^{-\tau \phi_\eps}\chi  u}_{L^2}^2\big).
							\end{split}
						\end{equation}
						We first absorb the last term on the right-hand side of \eqref{UCP derive 1}, assuming that $\eps>0$ is so small that
						\[
						C\eps^2\leq \frac 12 \eps^{-1} \iff \eps \leq (2C)^{-1/3}.
						\]
						With such values of  $\eps\in (0,(2C)^{-1/3}) $, we have 
						\begin{equation}\label{eq:final_ineq_absorbed}
							\begin{split}
								\eps^{-1} \norm{e^{-\tau\phi_\eps}\chi u}_{L^2}^2 &\leq 2C\big( \norm{e^{-\tau \phi_\eps}(\op \chi) u}_{L^2}^2+\eps\norm{e^{-\tau \phi_\eps} (\op\chi)  e^{-\theta} \op u}_{L^2}^2 \\
								&\qquad \quad +\eps^2\norm{e^{-\tau \phi_\eps}(\op\chi)  (\op^{-1}(\gamma u)+H)}_{L^2}^2\big).
							\end{split}
						\end{equation}
						In the forthcoming analysis, we will not change $\eps$ anymore and it is fixed. 
						
						Next, we choose the Carleman weight $\phi_\eps$ and the cutoff function $\chi$ appropriately, and argue by contradiction.  
						For the harmonic function $\varphi$, we take $\varphi=\log(|z|^2)$.   
						Then 
						\[
						e^{-\tau \phi_\eps} = |z|^{-2\tau} e^{-\frac{1}{2\eps}|z|^2},
						\]
						where $\phi_\eps$ is given by \eqref{phi_epsilon (z)}.  
						Since $\eps$ is fixed from this point onward, the exponential factor $e^{-\frac{1}{2\eps}|z|^2}$ can be regarded as a bounded weight, both above and below, in the forthcoming estimates.  
						On the other hand, the term 
						\[
						|z|^{-2\tau}
						\]
						decays rapidly as $\tau \to \infty$, because $|z| > 0$ for all $z \neq 0$.

						Let $\delta > 0$. We choose the cutoff function $\chi$ such that  
						\begin{equation}
							\chi(z) := 
							\begin{cases}
								1 & \text{if } |z| \leq r + \delta, \\[6pt]
								0 & \text{if } |z| \geq R. 
							\end{cases}
						\end{equation}
						It follows that $\op \chi$ is supported on the annulus $r + \delta \leq |z| \leq R$.
						With these choices, the right-hand side of \eqref{eq:final_ineq_absorbed} is bounded by
						\begin{equation}
							\begin{split}
								& \quad \, 2C \norm{\op\chi}_{L^\infty} \Big[ 
								\norm{e^{-\tau \phi_\eps} u}_{L^2(B(0,R)\setminus B(0,r+\delta))}^2 
								+ \eps \norm{e^{-\tau \phi_\eps} \op u}_{L^2(B(0,R)\setminus B(0,r+\delta))}^2 \\
								& \hspace{4.5cm} 
								+ \eps^2 \norm{e^{-\tau \phi_\eps} (\op^{-1}(\gamma u)+H)}_{L^2(B(0,R)\setminus B(0,r+\delta))}^2
								\Big] \\
								& \leq C' \norm{e^{-\tau \phi_\eps}}_{L^2(B(0,R)\setminus B(0,r+\delta))}^2,
							\end{split}
						\end{equation}
						since $u$ is $C^1$ and $\op^{-1}: L^\infty \to L^\infty$ is bounded.  
						In particular, we have $\norm{\op^{-1}(\gamma u)}_{L^\infty} \lesssim \norm{u}_{L^\infty}$, which follows directly from the definition of $\op^{-1}$.

						Since $|z|^{-2\tau}$ is decreasing in $\abs{z}$, we obtain
						\[
						\norm{e^{-\tau\phi_\eps}}_{L^2(B(0,R)\setminus B(0,r+\delta))}^2
						= \int_{B(0,R)\setminus B(0,r+\delta)} e^{-2\tau\phi_\eps}\, dz
						\lesssim |r+\delta|^{-2\tau}.
						\]
						Thus, the right-hand side of \eqref{eq:final_ineq_absorbed} is bounded by
						\[
						C'' |r+\delta|^{-2\tau},
						\]
						for some constant $C''$ independent of $\tau$ and $\eps$ (note that $C''$ may depend on $u$ and $H$, but this will not affect the argument).  
						
						On the other hand, since $\chi \equiv 1$ on $B(0,r+\delta)$, we have
						\[
						\norm{e^{-\tau\phi_\eps} u}_{L^2(B(0,r+\delta))}^2 
						\;\leq\; \norm{e^{-\tau\phi_\eps} \chi u}_{L^2(\C)}^2.
						\]
						Consequently,
						\[
						\int_{B(0,r+\delta)} e^{-2\tau\phi_\eps} |u|^2 \, dz
						\;\leq\; \eps C'' |r+\delta|^{-2\tau}.
						\]
						Recalling that $e^{-\tau \phi_\eps} = \abs{z}^{-2\tau} e^{-\frac{1}{2\eps}|z|^2}$, this inequality becomes
						\[
						\int_{B(0,r+\delta)} \abs{z}^{-2\tau} |u|^2 \, dz
						\;\leq\; \eps C''' |r+\delta|^{-2\tau},
						\]
						or equivalently,
						\begin{equation}\label{eq:almost_final_estim}
							\int_{B(0,r+\delta)} 
							\left( \frac{\abs{z}}{|r+\delta|} \right)^{-2\tau}
							|u|^2 \, dz
							\;\leq\; C''',
						\end{equation}
						for some constant $C''' > 0$.

						Assume then that there is $z_0\in B(0,r+\delta)$ such that $|u(z_0)|\neq 0$. Thus, by continuity of $u$ there is a neighborhood $\mathcal{N}\subset B(0,r+\delta)$ of $z_0$ such that 
						\[
						|u(z)|\geq \sigma \text{ for }z\in \mathcal{N}, 
						\]
						for some $\sigma>0$, where 
						\begin{equation}\label{nbhd condition N}
							|z|<r+\delta-s, \text{ for all }z\in \mathcal{N},
						\end{equation}
						for some $s>0$.
						Thus, using \eqref{eq:almost_final_estim} we have
						\begin{equation}\label{eq:final_estim}
							\begin{split}
								\sigma^2\int_{\mathcal{N}}\left(\frac{\abs{z}}{r+\delta}\right)^{-2\tau}\, dz\leq \int_{B(0,r+\delta)}\left(\frac{\abs{z}}{r+\delta}\right)^{-2\tau}|u|^2\, dz \leq C'''
							\end{split}
						\end{equation}
						Note that for $z\in \mathcal{N}$, we have the condition \eqref{nbhd condition N}, so that 
						\[
						\frac{\abs{z}}{r+\delta}\leq \frac{r+\delta-s}{r+\delta}=1-\frac{s}{r+\delta}
						\]
						which is strictly less than $1$. Thus, on the open set $\mathcal{N}$, there holds
						\[
						\left(\frac{\abs{z}}{r+\delta}\right)^{-2\tau}\to \infty \text{ as }\tau \to \infty.
						\]
						As a result, using this to \eqref{eq:final_estim} leads to a contradiction as the left-hand side blows up with $\tau \to \infty$. We conclude that $u\equiv 0$ on $B(0,R)$, which is larger ball than $B(0,r)$, and $u$ was assumed to be zero in $B(0,r)$.
						Finally, due to the standard propagation of smallness argument in UCP, one can conclude that $u\equiv 0$ in $U$, by using $u=0$ in $B(0,r)\subset U$. This concludes the proof.
					\end{proof}

					\section{Unique determination of the conformal factor}\label{sec: det of conformal factor}

					In the previous section, we determined the $2 \times 2$ matrix $D^2 u_0$ up to a conformal factor $c>0$ with $c|_{\p\Omega}=1$.  
					We now turn to the problem of recovering this conformal factor inside the domain $\Omega$.  
					To this end, we employ the second linearized equation together with its associated integral identity.  The analysis will be somewhat involved due to two reasons: (1) The full nonlinearity leads to complicated asymptotic analysis, (2) The second linearized equation is not coordinate invariant. These complications  seem to be unavoidable and lead to a non-local $\op$-equation.
					
					Let $u_0^{(j)} \in C^{4,\alpha}(\overline{\Omega})$ denote the solution to \eqref{MA equation zero boundary j=1,2} for $j=1,2$.  
					The second linearized Monge--Amp\`ere equation then takes the form
					\begin{equation}\label{2nd Linearized MA equation DN}
						\begin{cases}
							\big(-\Delta_{g_j} + X_{g_j} \cdot \nabla \big) w_j
							= \tr \big( g_j^{-1} (D^2 v_j^{(1)}) g_j^{-1} (D^2 v_j^{(2)}) \big) 
							& \text{ in } \Omega, \\[6pt]
							w_j = 0 & \text{ on } \partial \Omega,
						\end{cases}
					\end{equation}
					where $g_j$ and $X_{g_j}$ are given by \eqref{geometry from u_0} and \eqref{vector field X^b}, respectively, with $g = g_j$ and $j=1,2$.  
					As discussed in Proposition~\ref{prop: well-posed}, the problem \eqref{2nd Linearized MA equation DN} is well-posed.  
					
					\begin{lemma}\label{lemma: second linearization det}
						Under the assumptions of Theorem~\ref{theorem: uniqueness}, the DN map $\Lambda_F$ associated with \eqref{MA equation} determines the Neumann derivative $\p_{\nu_g} w \big|_{\p \Omega}$.  
						In particular, the condition \eqref{eq: same DN map} implies $\p_{\nu_{g_1}} w_1 = \p_{\nu_{g_2}} w_2$ on $\partial \Omega$.
					\end{lemma}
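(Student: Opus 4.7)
The plan is to carry out a second-order higher linearization of the DN map $\Lambda_F$ along the two-parameter family of Dirichlet data $\varphi_\eps = \eps_1\phi_1 + \eps_2\phi_2$ introduced in Section~\ref{subsec: second linearization}, identify the mixed second partial derivative $\p_{\eps_1}\p_{\eps_2}|_{\eps=0}\Lambda_F(\varphi_\eps)$ with the Euclidean Neumann derivative $\p_\nu w|_{\p\Omega}$, and then convert this to the metric Neumann derivative $\p_{\nu_g} w|_{\p\Omega}$ using the boundary values of $g$ supplied by Lemma~\ref{lemma: boundary determination}.

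First I would appeal to Proposition~\ref{prop: well-posed} to guarantee that $\Lambda_F : B_\delta(\p\Omega) \to C^\infty(\p\Omega)$ is Fréchet $C^\infty$-smooth. Choosing $\phi_1,\phi_2\in C^\infty(\p\Omega)$ small enough that $\varphi_\eps\in B_\delta(\p\Omega)$ in a neighborhood of $\eps=0\in\R^2$, the map $\eps \mapsto \Lambda_F(\varphi_\eps) = \p_\nu u_\eps\big|_{\p\Omega}$ is smooth in $\eps$. Differentiating once in $\eps_k$ and setting $\eps=0$ recovers $\p_\nu v^{(k)}|_{\p\Omega}$ for $k=1,2$, which is the DN map of the first linearization, while the mixed second derivative yields
\[
\left.\p_{\eps_1}\p_{\eps_2}\right|_{\eps=0}\Lambda_F(\varphi_\eps) = \p_\nu w\big|_{\p\Omega},
\]
where $w=\p^2_{\eps_1\eps_2}|_{\eps=0} u_\eps$ is the solution of the second linearized equation~\eqref{2nd Linearized MA equation DN} associated with $(\phi_1,\phi_2)$. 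The implicit interchange of Fréchet differentiation with the linear trace/normal operator is justified by the $C^\infty$ Fréchet smoothness of the solution map in Proposition~\ref{prop: well-posed} combined with continuity of the normal trace on $C^{4,\alpha}(\overline{\Omega})$. Since the left-hand side is computable from $\Lambda_F$ alone, this shows that $\Lambda_F$ determines $\p_\nu w|_{\p\Omega}$ for every admissible pair $(\phi_1,\phi_2)$.

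Next I would convert the Euclidean normal derivative into the metric one. Because $w|_{\p\Omega} = 0$, the tangential derivative of $w$ along $\p\Omega$ vanishes, so $\nabla w = (\p_\nu w)\,\nu$ on $\p\Omega$. Consequently,
\[
\p_{\nu_g} w\big|_{\p\Omega} = g(\nu_g,\nabla w)\big|_{\p\Omega} = (\p_\nu w)\, g(\nu_g,\nu)\big|_{\p\Omega},
\]
and the scalar factor $g(\nu_g,\nu)|_{\p\Omega}$ depends only on $g|_{\p\Omega}$ and the Euclidean unit normal $\nu$ of $\p\Omega$. Since $g=D^2 u_0$ and Lemma~\ref{lemma: boundary determination} delivers $D^2 u_0|_{\p\Omega}$ from the data $\Lambda_F$ together with the assumption that $F$ is known up to second order on $\p\Omega$, this conversion is encoded in the DN data.

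Finally, for the comparison statement: if $\Lambda_{F_1}(\varphi)=\Lambda_{F_2}(\varphi)$ for every $\varphi\in B_\delta(\p\Omega)$, then the linearization identity forces $\p_\nu w_1|_{\p\Omega}=\p_\nu w_2|_{\p\Omega}$ for every $(\phi_1,\phi_2)$, while Lemma~\ref{lemma: boundary determination} simultaneously forces $g_1|_{\p\Omega}=g_2|_{\p\Omega}$, hence equality of the conversion factors. Combining these yields $\p_{\nu_{g_1}} w_1|_{\p\Omega} = \p_{\nu_{g_2}} w_2|_{\p\Omega}$ as required. The only delicate point, which I would verify carefully rather than skip, is the justification of the interchange of Fréchet differentiation and the boundary normal trace described above; beyond this bookkeeping I do not anticipate any substantive obstacle.
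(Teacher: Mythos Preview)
Your proposal is correct and follows essentially the same approach as the paper: differentiate the smooth DN map twice along the two-parameter family to recover $\p_\nu w|_{\p\Omega}$, then use the boundary determination of $g=D^2u_0$ (Lemma~\ref{lemma: boundary determination}) to pass to $\p_{\nu_g} w|_{\p\Omega}$. The paper's own proof is a one-line reference to Lemmas~\ref{lemma: boundary determination} and~\ref{lemma: nonlinear to linear DN}; your version simply spells out the details, including the observation that $w|_{\p\Omega}=0$ forces $\nabla w=(\p_\nu w)\nu$ on $\p\Omega$, which makes the Euclidean-to-metric conversion transparent.
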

					
					\begin{proof}
						The argument is analogous to Lemma~\ref{lemma: nonlinear to linear DN}, but applied to the second linearization of solutions to \eqref{MA equation}. Combining Lemmas~\ref{lemma: boundary determination} and \ref{lemma: nonlinear to linear DN}, and differentiating twice with respect to the small parameter $\eps$, yields the desired result.
					\end{proof}

					\subsection{The second integral identity}

					We now turn to the derivation of the integral identity arising from the second linearization, which will be the key to proving our main result.  
					In particular, we first extract from the first linearization an identity that enables the recovery of the metric $g$. To this end, we introduce the adjoint problem associated with the first linearized equation \eqref{Linearized main equation}:
					\begin{equation}\label{adjoint of 1st linearization}
						\begin{cases}
							\Delta_g v^\ast 
							+ \dfrac{1}{\sqrt{|g|}} \, \partial_b \big( \sqrt{|g|} \, X_g^b \, v^\ast \big) = 0 
							& \text{in } \Omega, \\[0.5em]
							v^\ast = \varphi^\ast & \text{on } \partial \Omega,
						\end{cases}
					\end{equation}
					where $\varphi^\ast \in C^\infty(\partial\Omega)$ is an arbitrary boundary function.  
					The vector field $X_g$ here is the drift term appearing in the non-divergence-to-divergence recasting of the first linearized equation (see Section~\ref{sec: prel}).
					Notice that $\sqrt{\abs{g}}\Delta_g   = \nabla \cdot \big(  \sqrt{\abs{g}}g^{-1} \nabla  \big)$, and let $w$ be the solution to the second linearized equation \eqref{2nd Linearized MA equation}, then an integration by parts implies 
					\begin{equation}\label{integral id 1}
						\begin{split}
							&\quad \, \int_{\p \Omega}\sqrt{\abs{g}}\varphi^\ast \p_{\nu_g} w\, dS\\&=\int_{\Omega}v^\ast \sqrt{\abs{g}} \Delta_g w \, dx + \int_{\Omega} \sqrt{\abs{g}}g^{-1} \nabla v^\ast \cdot \nabla w\, dx \\
							&=\int_{\Omega}v^\ast \sqrt{\abs{g}}\Delta_g w \, dx +\int_{\p \Omega} \sqrt{\abs{g}}\p_{\nu_g} v^\ast w\, dS  -  \int_{\Omega} w \sqrt{\abs{g}}\Delta_g v^\ast \, dx \\
							&= \underbrace{\int_{\Omega} \sqrt{\abs{g}}v^\ast  \big( X_g^b \p_b w -\tr  \big( g^{-1} \big( D^2  v^{(1)} \big) g^{-1} \big( D^2v^{(2)} \big)\big) \big) \,dx }_{\text{By \eqref{2nd Linearized MA equation 2}}} + \underbrace{\int_{\Omega} w  \p_b \big( \sqrt{\abs{g}} X_g^b  v^\ast \big) \, dx}_{\text{By \eqref{adjoint of 1st linearization}}} \\
							&= \int_{\Omega} \sqrt{\abs{g}}v^\ast  \big( X_g^b \p_b w -\tr  \big( g^{-1} \big( D^2  v^{(1)} \big) g^{-1} \big( D^2v^{(2)} \big)\big) \big)\, dx   -\int_{\Omega}v^\ast  \sqrt{\abs{g}} X_g^b \p_b w \, dx \\
							&= -\int_{\Omega} v^\ast \tr  \big( g^{-1} \big( D^2  v^{(1)} \big) g^{-1} \big( D^2v^{(2)} \big)\big) \, dV_g,
						\end{split}
					\end{equation}
					where we used $\int_{\p \Omega} w v^\ast \sqrt{\abs{g}} X^b \nu_b \, dS = 0$ (since $w|_{\p \Omega} = 0$).  
					Here $\nu = (\nu_1,\nu_2)$ denotes the unit outer normal to $\p \Omega$, and $\p_{\nu_g} w \big|_{\p\Omega} 
					= \sqrt{\abs{g}}\, g^{ik} \p_{i}w \, \nu_k \big|_{\p\Omega}$ is the conormal derivative.  
					We employ the standard notation from \eqref{dV_g}, which will be used throughout the rest of the work.  
					Finally, recall that $v$ denotes the solution of the first linearized equation \eqref{Linearized main equation}.  
					Combining all of the above computations, we arrive at the following result.

					\begin{lemma}[Integral identity for the second linearization]
						The following integral identities hold:
						\begin{enumerate}[(i)]
							\item \label{item 1 integral id} Let $\Lambda_F$ be the DN map of \eqref{MA equation}, then there holds 
							\begin{equation}\label{integral id second 1}
								\begin{split}
									\int_{\p \Omega} \sqrt{\abs{g}} \varphi^{\ast}\p_{\nu_g} w\, dS = -\int_{\Omega} v^*\tr  \big( g^{-1} \big( D^2  v^{(1)} \big) g^{-1} \big( D^2v^{(2)} \big)\big)\,  dV_g,
								\end{split}
							\end{equation}
							where $g$ is given by \eqref{geometry from u_0}, $v^{(k)}$ is the solution to the first linearized equation \eqref{Linearized main equation} and $w$ is the solution to \eqref{2nd Linearized MA equation}, for $k=1,2$. 
							
							\item \label{item 2 integral id}  Let $\Lambda_{F_j}$ be the DN map of \eqref{MA equation j=1,2} for $j=1,2$, and suppose the condition \eqref{eq: same DN map} holds. Then 
							\begin{equation}\label{integral id second 2}
								\begin{split}
									&\int_{\Omega} v_1^*\tr  \big( g_1^{-1} \big( D^2  v_1^{(1)} \big) g_1^{-1} \big( D^2v_1^{(2)} \big)\big) \, dV_{g_1}\\
									&\quad -\int_{\Omega} v_2^*\tr  \big( g_2^{-1} \big( D^2  v_2^{(1)} \big) g_2^{-1} \big( D^2v_2^{(2)} \big)\big) \, dV_{g_2}=0,
								\end{split}
							\end{equation}
							where $v_j^\ast$ is the solution to the adjoint problem \eqref{adjoint of 1st linearization} with respect to the first linearized equation, for $j=1,2$.
						\end{enumerate}

					\end{lemma}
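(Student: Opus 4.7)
The plan is to prove (i) by a direct integration-by-parts computation, essentially formalizing the chain of equalities already displayed just before the lemma statement, and then to deduce (ii) from (i) by matching all boundary quantities across the two problems for $F_1$ and $F_2$.

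For (i), I would start from the boundary integral on the left and apply the weighted Green's identity twice, using the fact that $\sqrt{|g|}\,\Delta_g = \partial_a\big(\sqrt{|g|}\,g^{ab}\partial_b\,\cdot\,\big)$ so that the Laplace-Beltrami operator integrates by parts cleanly. This transfers the Laplacian from $w$ onto $v^\ast$. Substituting the second linearized equation \eqref{2nd Linearized MA equation 2} replaces $\Delta_g w$ by $X_g\cdot\nabla w - \tr\big(g^{-1}(D^2 v^{(1)})\,g^{-1}(D^2 v^{(2)})\big)$, while the adjoint equation \eqref{adjoint of 1st linearization} rewrites $\sqrt{|g|}\,\Delta_g v^\ast$ as $-\partial_b\big(\sqrt{|g|}\,X_g^b v^\ast\big)$. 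A further integration by parts on this last term produces exactly the drift contribution needed to cancel $\int_\Omega \sqrt{|g|}\,v^\ast X_g^b \partial_b w\,dx$, and the boundary term $\int_{\partial\Omega} w\,v^\ast \sqrt{|g|}\,X_g\cdot\nu\,dS$ vanishes because $w|_{\partial\Omega}=0$. What remains is precisely the identity \eqref{integral id second 1}.

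For (ii), I would apply (i) to each of $F_1$ and $F_2$ with a common choice of Dirichlet data $\phi_1,\phi_2$ for $v_j^{(1)}, v_j^{(2)}$ and a common $\varphi^\ast$ for $v_j^\ast$. By Lemma \ref{lemma: second linearization det}, the Neumann trace $\partial_{\nu_{g_j}} w_j|_{\partial\Omega}$ is determined by $\Lambda_{F_j}$, so the hypothesis $\Lambda_{F_1}(\varphi)=\Lambda_{F_2}(\varphi)$ gives $\partial_{\nu_{g_1}} w_1 = \partial_{\nu_{g_2}} w_2$ on $\partial\Omega$. By Lemma \ref{lemma: boundary determination}, the weight $\sqrt{|g_j|}$ on $\partial\Omega$ is also determined by $\Lambda_{F_j}$ and therefore coincides for $j=1,2$. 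Consequently the left-hand sides of the two instances of \eqref{integral id second 1} are equal, and subtracting them yields \eqref{integral id second 2}.

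The main subtlety is not in the integration by parts itself, which is routine once the adjoint equation \eqref{adjoint of 1st linearization} is in place, but rather in carefully ensuring that every boundary contribution on the left of \eqref{integral id second 1} is known to agree across $j=1,2$. This is precisely the role played by Lemmas \ref{lemma: boundary determination} and \ref{lemma: second linearization det}: the former fixes the weight $\sqrt{|g_j|}$ and all geometric data on $\partial\Omega$, while the latter ensures that the Neumann trace of $w_j$ is equal for the two problems. Once these ingredients are combined, (ii) follows by subtraction from (i).
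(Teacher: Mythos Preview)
Your proposal is correct and follows essentially the same route as the paper: part (i) is exactly the integration-by-parts chain displayed in \eqref{integral id 1} just before the lemma, and part (ii) is obtained by applying (i) to $j=1,2$ and subtracting, using Lemma~\ref{lemma: second linearization det} together with the boundary determination of $g_j|_{\partial\Omega}$ from Lemma~\ref{lemma: boundary determination}. Your explicit observation that $\sqrt{|g_j|}|_{\partial\Omega}$ must also match is a helpful clarification the paper leaves implicit.
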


					\begin{proof}
						We already proved \ref{item 1 integral id} by the previous computations. Combining Lemma \ref{lemma: second linearization det}, one can prove the integral identity \ref{item 2 integral id} directly.
					\end{proof}

					\begin{remark}
						To recover the conformal factor $c$ appearing in Lemma \ref{lemma: first linearization det} from the integral identity \eqref{integral id second 2}, we analyze it using global isothermal coordinates. A complication arises because the Hessian $D^2$ in the identity is not the invariant Hessian for either metric $g_1$ or $g_2$. Consequently, the change to isothermal coordinates introduces additional coordinate artifact terms, which we collectively denote by $Y$ (see \eqref{Y=lower order terms}). These terms $Y$ will ultimately lead to a non-local $\bar{\partial}$ equation for the conformal factor $c$, which we then solve.
						% We would like to recover the conformal factor $c$ appearing in Lemma \ref{lemma: first linearization det} from the integral identity \eqref{integral id second 2}. The metrics $g_1$ and $g_2$ in the  identity might not  be isothermal and we will pass to (global) isothermal coordinates to analyse it. We however note that the Hessian $D^2$ in the identity is not the invariant Hessian for neither $g_1$ or $g_2$. Thus making the change to isothermal coordinates will produce additional coordinate artifact terms  collectively denoted by $Y$, see \eqref{Y=lower order terms}. The terms $Y$ will ultimately lead to a non-local d-bar equation for the conformal factor $c$ that we will solve.
						% which seems not to be useful to determine the conformal factor $c$ given in Lemma \ref{lemma: first linearization det}. To this end, we would like to use Lemma \ref{lemma: first linearization det} with isothermal coordinates to extract a more useful identity to show the uniqueness of the conformal factor $c>0$. 
					\end{remark}

					\subsection{Change of variables for the Hessian}

					Recalling that $D^2 u$ denotes the Hessian matrix of $u$, let $\chi:\R^2 \to \R^2$ denote a change of coordinates (can be arbitrary), then we have 
					\[
					D^2 \wt u =\nabla \chi^T D^2 u \big|_\chi \nabla \chi + \sum_{k=1}^{2} D^2\chi^k\cdot \p_{k}u \big|_{\chi},
					\]
					where $\wt u=u\circ \chi$, and $\chi(x)=\big(\chi^1(x_1,x_2), \chi^2(x_1,x_2)\big)$ denotes the change of variables in the plane.
					This follows from
					\begin{equation}
						\begin{split}
							\big(D^2 \wt u \big)_{ab} &=\p_{ab}(u\circ \chi)=\p_a \big(\p_k u|_\chi \p_b\chi^k\big)\\
							&=\p_{mk} u|_\chi\p_a\chi^m\p_b\chi^k+\p_ku|_\chi\p_{ab}\chi^k\\
							&= (D\chi^T)_a^k(D^2 u)_{km}|_\chi D\chi_b^m+\p_ku|_\chi\p_{ab}\chi^k,
						\end{split}
					\end{equation}
					for $1\leq a, b\leq 2$, and we denote
					\[
					\p_ku|_\chi\p_{ab}\chi^k=D^2\chi\cdot \nabla u|_\chi,
					\]
					where we still adopt the Einstein summation convention for repeated indices.

					Let us also denote $D^2 \chi$ as a three tensor by $(D^2\chi)_{ab}^k$, which is symmetric in the lower indices, i.e., $(D^2\chi)_{ab}^k=(D^2\chi)_{ba}^k$ for all $1\leq a, b, k\leq 2$. Recalling that the change of variables for a Riemannian metric $g$ is given by \eqref{formula in change of coordinates}. Note that the preceding computations hold not only in dimension two but also in any dimension. Thus, we have 
					\begin{equation}\label{change of variable in the trace}
						\begin{split}
							&\quad \, \tr  \big(  \widetilde g^{-1} \big( D^2 \wt v^{(1)} \big) \tilde g^{-1} \big( D^2 \wt v^{(2)} \big)\big)\\
							&=\tr  \big( g^{-1} \big( D^2v^{(1)} \big) g^{-1} \big( D^2v^{(2)} \big)\big)\big|_\chi  \\
							& \quad \, + \tr  \big( g^{-1}|_\chi  \big( D^2v^{(1)} \big) g^{-1}|_\chi  \big( D^2 \chi \cdot \nabla v^{(2)} \big)\big) \\
							&\quad \, +  \tr  \big( g^{-1}|_\chi  \big( D^2\chi \cdot \nabla v^{(1)} \big) g^{-1}|_\chi  \big( D^2 v^{(2)} \big)\big)\big|_\chi  \\
							&\quad \, + \tr  \big( g^{-1}|_\chi \big( D^2\chi\cdot \nabla v^{(1)}|_\chi \big) g^{-1}|_\chi \big( D^2\chi\cdot \nabla v^{(2)}|_\chi \big)\big),
						\end{split}
					\end{equation}
					where $\wt g=\chi^\ast g$ and $\wt v^{(k)}=v^{(k)}\circ \chi$, for $k=1,2$. Let us emphasize again that the mapping $\chi$ can be any change of variables at the moment.

					Applying the isothermal coordinates, we can transform the Laplace–Beltrami operator into the standard (isotropic) Laplacian, up to a conformal factor. This change of variables is central to the argument that follows.
					On the one hand in the isothermal coordinates $\chi$, using \eqref{change of variable in the trace}, one has 
					\begin{equation}
						\begin{split}
							&\quad \, \tr  \big(  g_1^{-1} D^2{v^{(1)}_1 } g_1^{-1} D^2{v^{(2)}_1 }\big) \\&=\mu^{-2}\tr  \big(  \big( D^2{\mathbf{v}_1^{(1)} } \big) \big( D^2\mathbf{v}_1^{(2) }\big)\big) + \mu^{-2}\tr\big(D^2\mathbf{v}_1^{(1)}C\cdot \nabla \mathbf{v}_1^{(2)}\big)\\
							&\quad\, +\mu^{-2}\tr\big(C\cdot \nabla \mathbf{v}_1^{(1)}D^2 \mathbf{v}_1^{(2)}\big) +\mu^{-2}\tr\big(C\cdot \nabla \mathbf{v}_1^{(1)} C\cdot \nabla \mathbf{v}_1^{(2)}\big),
						\end{split}
					\end{equation}
					where $C=\big( C^k_{ab}\big)_{1\leq a, b, k\leq 2}$ is some function with $3$ indices and depends on the change of variables to the isothermal coordinates.

					On the other hand, by Lemma \ref{lemma: first linearization det}, one known that the mapping $J$ changes from quantities with index $2$ to the index $1$ with $v_j^{(1)}=v_j^{(2)}\circ J$, for $j=1,2$. Consider the map $\wt \chi := J\circ \chi$, using the same isothermal coordinates mentioned in the previous section, then we can obtain 
					\begin{equation}
						\begin{split}
							&\quad \, \tr  \big(  g_2^{-1} D^2{v_2^{(1)} } g_2^{-1} D^2{v_2^{(2)}}\big)\\
							&=c^{-2}\mu^{-2}\tr  \big( \big( D^2\mathbf{v}_1^{(1)}\big) \big( D^2\mathbf{v}_1^{(2)}\big) \big) +c^{-2} \mu^{-2}
							\tr\big(\big( D^2\mathbf{v}_1^{(1)}\big) \wt C \cdot \nabla \mathbf{v}_1^{(2)}\big)\\
							&\quad \, +c^{-2}\mu^{-2}\tr\big(\wt C\cdot \nabla \mathbf{v}_1^{(1)} \big(D^2 \mathbf{v}_1^{(2)}\big) \big) +c^{-2}\mu^{-2}\tr\big(\wt C\cdot \nabla \mathbf{v}_1^{(1)}\wt C\cdot \nabla \mathbf{v}_1^{(2)}\big),
						\end{split}
					\end{equation}
					where $\wt C$ is some function with $3$ indices $\wt C_{ab}^c$, and $\wt C$ is actually depends on the function $C$ and $J$. Meanwhile, the function $C$ and $\wt C$ have the same value on the boundary $\p \wt \Omega$, since we have utilized only one quasi-conformal mapping $\chi$, and $J|_{\p \Omega}=\mathrm{Id}$.

					Now, adopting all notations introduced in Section \ref{sec: isothermal coordinates}, plugging all the above changes of variables of Hessian into \eqref{integral id second 2}, we can obtain 
					\begin{equation}
						\begin{split}
							0&=\int_{\Omega} v_1^*\tr  \big( g_1^{-1} \big( D^2  v_1^{(1)} \big) g_1^{-1} \big( D^2v_1^{(2)} \big)\big) \, dV_{g_1}\\
							&\quad \, -\int_{\Omega} v_2^*\tr  \big( g_2^{-1} \big( D^2  v_2^{(1)} \big) g_2^{-1} \big( D^2v_2^{(2)} \big)\big) \, dV_{g_2}\\
							&=\int_{\Omega} v_1^*\tr  \big( g_1^{-1} \big( D^2  v_1^{(1)} \big) g_1^{-1} \big( D^2v_1^{(2)} \big)\big) \sqrt{\abs{g_1}}\, dx\\
							&\quad \, -\int_{\Omega} v_2^*\tr  \big( g_2^{-1} \big( D^2  v_2^{(1)} \big) g_2^{-1} \big( D^2v_2^{(2)} \big)\big) \sqrt{\abs{g_2}}\, dx\\
							&= \int_{\wt \Omega} \big[ G \mathbf{v}^\ast \tr\big( \big( D^2 \mathbf{v}_1^{(1)} \big)  \big( D^2 \mathbf{v}_1^{(2)} \big) \big) + Y \big] \, dx ,
						\end{split}
					\end{equation}
					where
					\begin{equation}\label{def of G}
						\begin{split}
							G=\mu^{-1}\big(1-c^{-2}\big) \quad  \text{in} \quad \wt \Omega,
						\end{split}
					\end{equation} 
					$\sqrt{\abs{g_1}}=\mu$ (after the change of variable), and $\mathbf{v}^\ast$ is given by \eqref{adjoint first linear eq in isothermal}. Here, $Y$ denotes the lower order terms with 
					\begin{equation}\label{Y=lower order terms}
						\begin{split}
							\mu Y&=\mathbf{v}^\ast \tr\big(D^2\mathbf{v}_1^{(1)}C\cdot \nabla \mathbf{v}_1^{(2)}\big)-c^{-2}\mathbf{v}^\ast 
							\tr\big(\big( D^2\mathbf{v}_1^{(1)}\big) \wt C \cdot \nabla \mathbf{v}_1^{(2)}\big)\\
							&\quad\, +\mathbf{v}^\ast \tr\big(C\cdot \nabla \mathbf{v}_1^{(1)}D^2 \mathbf{v}_1^{(2)}\big)-c^{-2}\mathbf{v}^\ast \tr\big(\wt C\cdot \nabla \mathbf{v}_1^{(1)} \big(D^2 \mathbf{v}_1^{(2)}\big) \big)\\
							&\quad \, +\mathbf{v}^\ast \tr\big(C\cdot \nabla \mathbf{v}_1^{(1)} C\cdot \nabla \mathbf{v}_1^{(2)}\big) - c^{-2}\mathbf{v}^\ast \tr\big(\wt C\cdot \nabla \mathbf{v}_1^{(1)}\wt C\cdot \nabla \mathbf{v}_1^{(2)}\big) \\
							&=\mathbf{v}^\ast \tr\big(D^2\mathbf{v}_1^{(1)}\mathcal{C}\cdot \nabla \mathbf{v}_1^{(2)}\big) +\mathbf{v}^\ast \tr\big(D^2\mathbf{v}_1^{(2)}\mathcal{C}\cdot \nabla \mathbf{v}_1^{(1)}\big)\\
							&\quad \, +(1-c^{-2})\mathbf{v}^\ast \tr\big(C\cdot \nabla \mathbf{v}_1^{(1)} C\cdot \nabla \mathbf{v}_1^{(2)}\big)  ,
						\end{split}
					\end{equation}
					where we use the notation
					\begin{equation}\label{def of mathcal C}
						\begin{split}
							\mathcal{C}:=C-c^{-2}\wt C =(1-c^{-2}) C,
						\end{split}
					\end{equation}
					for the three tensor function in the rest of the paper, where we use Lemma \ref{theorem: J=Id} to conclude $\wt C=C$ as $J=\mathrm{Id}$ in $\overline{\Omega}$.

					Now, we have already transformed the metric from $g_2$ to $g_1$ and then to the isothermal coordinates. In what follows, we will work on the $g_1$-domain, and let us denote $g\equiv g_1$ and $\mathbf{v}^{(k)}\equiv \mathbf{v}_1^{(k)}$ for $k=1,2$ to simplify our notations.
					With the above analysis at hand, we can have the next key result, which is used to prove the uniqueness of the conformal factor $c$ in $\Omega$.

					\begin{theorem}\label{theorem: uniqueness of conformal factor}
						Assume that 
						\begin{equation}\label{integral id.}
							\begin{split}
								\int_{\Omega} \big[ G \mathbf{v}^\ast \tr\big( \big( D^2 \mathbf{v}^{(1)} \big)  \big( D^2 \mathbf{v}^{(2)} \big) \big) + Y \big] \, dx =0,
							\end{split}
						\end{equation}
						for any $\mathbf{v}^{(1)},\mathbf{v}^{(2)}$ solving \eqref{equ: isothermal 2}, and $\mathbf{v}^\ast$ solving \eqref{adjoint first linear eq in isothermal j=1}, where $Y$ is given by \eqref{Y=lower order terms}. Let $G$ be the function given by \eqref{def of G}, then $c=1$ in $\Omega$. 
						
					\end{theorem}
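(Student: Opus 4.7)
The plan is to substitute the CGO solutions from Lemma~\ref{lemma: CGO solution summarize} into the integral identity \eqref{integral id.}, extract the asymptotics as $h \to 0$, and derive a nonlocal $\op$-equation for $c$ that fits the framework of Lemma~\ref{lemma: UCP with Carleman}.

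First I would choose the CGOs as follows: take $\mathbf{v}^{(1)} = F_{A_1}^{-1} e^{\Phi_1/h}(1+r_1)$ and $\mathbf{v}^\ast = F_{A^\ast}^{-1} e^{\Phi^\ast/h}(1+r_\ast)$ with holomorphic phases $\Phi_1, \Phi^\ast$ without critical points and with $\Phi^\ast = -\Phi_1$, and take $\mathbf{v}^{(2)} = F_{\overline A_2} e^{-\overline\Phi_2/h}(1+\wt r_2)$ with $\Phi_2$ a holomorphic Morse function whose nondegenerate critical point is placed at an arbitrary point $z_0 \in \wt \Omega$. Using the complex form of the Hessian \eqref{Hessian matrix in complex variable}, the leading ($h^{-2}$) part of $D^2 \mathbf{v}^{(1)}$ is proportional to the matrix $A$ of \eqref{matrices A B} (since $\op$ annihilates the holomorphic phase), and the leading part of $D^2 \mathbf{v}^{(2)}$ is proportional to $B$ (since $\p$ annihilates the antiholomorphic phase). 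The algebraic identities $\tr(AB) = 4$ and $\tr(A^2) = \tr(B^2) = 0$ select which products of Hessians can contribute, while the combined oscillatory factor simplifies to $e^{(\Phi_1+\Phi^\ast-\overline\Phi_2)/h} = e^{-\overline\Phi_2/h}$, which is localized near $z_0$ through the Morse structure of $\Phi_2$.

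Next I would multiply the integral identity by $h^4$ and carry out the asymptotic expansion via complex stationary phase at $z_0$. Because $\op\overline\Phi_2$ vanishes at the critical point, the naive leading $h^{-4}$ term carrying $(\op\overline\Phi_2)^2$ vanishes pointwise, and one must retain the subleading Hessian contributions coming from $\p\op$ hitting the amplitudes $F_{A_1}^{-1}, F_{A^\ast}^{-1}, F_{\overline A_2}$ (which encode the drift $\mathbf{X}_g$) and from $-\op^2 \overline\Phi_2 / h$. The corrections $r_1, r_\ast$, having holomorphic phases without critical points, are handled by the polynomial-in-$h$ expansion of Lemma~\ref{lem: asymptotic of remainders with linear}, so they contribute only local terms. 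The Morse-phase correction $\wt r_2$ of \eqref{eq: form of tilderh} carries the intrinsic operator $\p_\psi^{-1}$; when it is paired against $G$ in the integral, stationary phase extracts a $\op^{-1}(\gamma G)$ nonlocal contribution. The lower-order $Y$-terms of \eqref{Y=lower order terms}, already carrying the factor $\mathcal{C} = (1-c^{-2}) C$ by \eqref{def of mathcal C}, scale as $h^{-3}$ and after stationary phase combine with the above to produce local, $c$-dependent contributions. Polarizing over the free parameters in the linear phases $\Phi_1$ and over $z_0 \in \wt\Omega$, the integral identity converts into a pointwise equation
\begin{equation*}
\op\bigl(\mathcal{A}(z)\op \mathbf{c}(z) + \alpha(z) \mathbf{c}(z)\bigr) = \beta(z) \op^{-1}\bigl(\gamma(z) \mathbf{c}(z)\bigr) + H(z), \qquad z \in \wt\Omega,
\end{equation*}
for the unknown $\mathbf{c} := c - 1$ (equivalently $G = \mu^{-1}(1-c^{-2})$, obtained by expanding about $c = 1$), where the principal coefficient $\mathcal{A}$ is nonvanishing (inherited from $\tr(AB) = 4$ and $\mu^{-1} \neq 0$) and $H$ is holomorphic because it collects precisely those contributions of the identity that are independent of $1-c^{-2}$ and whose $z$-dependence factors through the holomorphic amplitudes. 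This is exactly the structure of \eqref{eq:ucp_equation}.

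Finally, since $c|_{\p\wt\Omega} = 1$ by Theorem~\ref{theorem: J=Id}, the unknown $\mathbf{c}$ vanishes on $\p\wt\Omega$. Extending the coefficients smoothly across $\p\wt\Omega$ into a slightly larger bounded open set $U \supset \overline{\wt\Omega}$ and extending $\mathbf{c}$ by zero, the equation remains valid on $U$, and Lemma~\ref{lemma: UCP with Carleman} applied with $W = U \setminus \overline{\wt\Omega}$ forces $\mathbf{c} \equiv 0$ on $U$. Hence $c \equiv 1$ on $\wt\Omega$, and pulling back via $\chi^{-1}$ gives $c \equiv 1$ on $\Omega$. The main obstacle is the asymptotic bookkeeping of the preceding step: because the Monge--Amp\`ere nonlinearity places four derivatives on the Hessian product, the leading CGO contribution scales as $h^{-4}$ while the $Y$-terms scale only as $h^{-3}$, so isolating the exact coefficients $\mathcal{A}, \alpha, \beta, \gamma, H$, and in particular verifying that the nonlocality produced by $\wt r_2$ is precisely of pure $\op^{-1}$ type so that Lemma~\ref{lemma: UCP with Carleman} is applicable, demands the refined CGO expansions of Section~\ref{sec: CGOs} together with the structural reduction of $Y$ via $\mathcal{C} = (1-c^{-2}) C$.
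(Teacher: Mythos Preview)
Your overall strategy (CGOs, stationary phase, nonlocal $\op$-equation, UCP) matches the paper, but the specific choices you make break the argument in two places.

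The phase choice $\Phi^\ast=-\Phi_1$ is the main problem. With this, the combined exponent $\Phi_1+\Phi^\ast-\overline\Phi_2=-\overline\Phi_2$ has nonzero real part (for a holomorphic Morse $\Phi_2$, $\mathrm{Re}\,\Phi_2$ is harmonic with a saddle at the critical point), so $|e^{-\overline\Phi_2/h}|$ grows like $e^{C/h}$ on part of the domain. This exponential growth swamps every remainder estimate of Section~\ref{sec: CGOs}, and real stationary phase does not apply. The paper avoids this by a deliberate choice: $\Phi_1(z)=z+\tfrac18 z^2$, $\Phi^\ast(z)=-z+\tfrac18 z^2$, and $\overline{\Phi_2}=-\tfrac14\overline z^2$, so that $\Phi_1+\Phi^\ast+\overline{\Phi_2}=\tfrac14(z^2-\overline z^2)=\mathsf{i}x_1x_2$ is \emph{purely imaginary}. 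Only then do the CGO estimates survive inside the integral identity, and the correct normalization becomes $\lim_{h\to0}(h\cdot\text{identity})$, not multiplication by $h^4$.

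You also misidentify both the source of the nonlocal term and the reason $H$ is holomorphic. In the paper the contributions of $\wt r_2$ to the $G$-integral (the terms $S_3,S_4$) all vanish; the $\op^{-1}$ term comes from the $Y$-integral, specifically from pairing $\partial\wt r_2$ against $\mathcal C\cdot\nabla\mathbf v^{(1)}$ (the terms $S_{5,3}^{2}+S_{6,3}^{2}$), after writing $\wt r_2=-\p_\psi^{-1}V'\wt s_2$ and using the duality formula \eqref{eq:integration by parts for d}. As for $H$: every term in the identity already carries the factor $1-c^{-2}$ (through $G$, $\wt G_0$, $\wt G_1$, $\mathcal C$), so there are no contributions ``independent of $1-c^{-2}$''. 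The holomorphic $H$ arises because the nonlocal residue, when the critical point is shifted to $a$, contains the factor $(w-a)^2=(\partial\Phi_1)(\partial\Phi^\ast)$ inside $\op^{-1}$; expanding this against the Cauchy kernel $(w-a)^{-1}$ yields $\op^{-1}(\gamma\mathbf c)(a)$ plus a linear polynomial $c_1a+c_2$ with constants $c_1,c_2$, and that polynomial is the holomorphic $H$.
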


					\begin{remark}\label{remark: large U}
						Thanks to the boundary determination of Lemma \ref{lemma: boundary determination}, utilizing $c|_{\p\Omega}=1$ and $\p_{\nu}c|_{\p\Omega}=0$, we can rewrite the integral identity \eqref{integral id.} as 
						\begin{equation}\label{integral id._extended}
							\begin{split}
								\int_{U} \big[ G \mathbf{v}^\ast \tr\big( \big( D^2 \mathbf{v}^{(1)} \big)  \big( D^2 \mathbf{v}^{(2)} \big) \big) + Y \big] \, dx =0,
							\end{split}
						\end{equation}
						where $\Omega$ is compactly contained in $U$ since we have $G=Y=0$ in $U \setminus \Omega$. Thus, we are going to use the integral identity \eqref{integral id._extended} to claim $c=1$ in $U$ so that $c=1$ in $\Omega$ in the rest of the paper. 
						
					\end{remark}

					Since $\mathbf{v}^{(1)}$, $\mathbf{v}^{(2)}$, and $\mathbf{v}^\ast$ can be taken as arbitrary solutions to \eqref{equ: isothermal 2} and \eqref{adjoint first linear eq in isothermal}, respectively, we will employ isothermal coordinates and the associated CGO solutions summarized in Lemma \ref{lemma: CGO solution summarize} for the first linearized equations to prove Theorem~\ref{theorem: uniqueness of conformal factor}.

					\subsection{Asymptotic analysis for the second integral identity}

					To prove Theorem \ref{theorem: uniqueness of conformal factor}, let us review the known stationary phase formula. \\
					
					\noindent $\bullet$ \textbf{Stationary phase formula.} For any $\varphi \in C^\infty_0(\R^2)$, we have the asymptotic expansion of the oscillatory integral 
					\begin{equation}\label{stationary phase expansion}
						\begin{split}
							\frac{1}{2\pi h} \int_{\R^2} e^{\mathsf{i}x_1x_2/h} \varphi(x)\, dx &= \sum_{k=0}^{N-1}\frac{h^k}{k!} \Big( \Big( \frac{1}{\mathsf{i}} \p_{x_1}\p_{x_2}\Big)^k \varphi \Big) (0,0)+ R_N(\varphi ; h) \\
							&=\sum_{k=0}^{N-1}\frac{h^k}{k!} \big(  \big( \big(\overline{\p}^2-\p^2\big)\big)^k \varphi \big) (0,0)+ R_N(\varphi ; h),
						\end{split}
					\end{equation}
					for $N\in \N$ and $h>0$.
					Here, $R_N(\varphi;h)$ denotes the error term of the expansion that can be estimated by 
					\begin{equation}
						\left| R_N(\varphi;h)\right| \leq \frac{Ch^N}{N!}\sum_{\alpha_1 +\alpha_2 \leq N}\big\| \p_{x_1}^{\alpha_1}\p_{x_2}^{\alpha_2}\LC \p_{x_1}\p_{x_2}\RC^N \varphi \big\|_{L^1(\R^2)},
					\end{equation}
					for any $N\in \N$, and $\alpha=(\alpha_1,\alpha_2)\in \LC \N \cup \{0\}\RC^2$ denotes the multi-indices. 	Now, we can prove Theorem \ref{theorem: uniqueness of conformal factor}.

					\begin{comment}

						\begin{lemma}\label{lemma: small o_sec 6}
							Assume that $f_0\in C_c^\infty(\R^2)$, $V\in W^{1,p}(\R^2)$ for some $p>2$, and $x=x_1 + \mathsf{i}x_2$. Let $\psi$ be a real Morse function with a critical point at the origin, then 
							\begin{enumerate}[(i)]
								\item\label{item 1 in small o lemma_sec 6} The limit holds \begin{equation}
									\int_{\R^2} f_0 e^{4\mathsf{i}\psi/h}\overline\p_{\psi}^{-1} V \, dx= o(h), \quad \text{as }h\to0.
								\end{equation}
								
								\item\label{item 2 in small o lemma_sec 6} Moreover, we  have
								\begin{equation}
									\begin{split}
										\int_{\R^2} f_0 e^{4\mathsf{i}\psi/h}\wt s_h\, dx =o(h), \quad \text{as }h\to0,
									\end{split}
								\end{equation}
								where $\wt s_h$ is as in \eqref{eq: form of tilderh}. 
								%\f{Add here for reference to CGO section.} CGO solutions with antiholomorphic phase. 
							\end{enumerate}
							
						\end{lemma}

						With the preceding lemma, 
					\end{comment}

					\begin{proof}[Proof of Theorem \ref{theorem: uniqueness of conformal factor}]
						The proof relies on the asymptotic behavior of CGO solutions for the first linearized equation.  
						Using the stationary phase method, we extract the principal contributions. Unlike semilinear or quasilinear cases, the full nonlinearity of the Monge--Amp\`ere equation introduces two derivatives of the CGOs in the integral identity, making the asymptotic analysis substantially more delicate and dependent on the estimates from Section~\ref{sec: CGOs} and Section~\ref{sec: Carleman and UCP}.  
						
						This asymptotic analysis leads to a second-order differential equation with a lower-order nonlocal perturbation 
						\begin{equation}
							\op (A\op \mathbf{c}(z)+\alpha(z)\mathbf{c}(z))=\beta(z)\op^{-1}(\gamma(z)\mathbf{c}(z)) +H(z) \quad \text{in } U,
						\end{equation}
						for $\mathbf{c}=1-c^{-2}$ (see~\eqref{governing term in asym analy 3}), where the coefficients in the above equation can be explicitly determined (see the last step of the proof). In particular, the leading coefficient $A$ is non-vanishing, and the function $H$ is holomorphic. Therefore, applying UCP of Lemma \ref{lemma: UCP with Carleman}, we can deduce $\mathbf{c}\equiv 0$ in $U$, hence $c=1$ in $U$. Here $U$ is an open set fulfilling the property $\Omega\Subset U$ that is given in Remark \ref{remark: large U}.
						Moreover, since $G$ and $Y$ (defined in \eqref{def of G} and \eqref{Y=lower order terms}) vanish on $\p\Omega$ up to higher orders, integration by parts can be performed (at least twice) without any boundary contributions. Meanwhile, we also set $c=1$ in the exterior domain $\R^n\setminus \Omega$, so that $c\in C^2(\R^2)$. The proof is organized into eight steps.\\

						{\it Step 0. Initialization.}\\
						
						\noindent Let us consider the holomorphic functions
						\begin{equation}\label{Phi_1 and Phi_2}
							\Phi_1(z)=z+\frac{1}{8}z^2, \quad \text{and} \quad \Phi_2(z)=-\frac{1}{4}z^2
						\end{equation}
						in holomorphic coordinates. We may assume by scaling the coordinates $z$ that $\Phi_1$ does not have critical points in $U$. Let us compute $	\tr  \big( \big( D^2  \mathbf{v}^{(1)} \big)  \big( D^2 \mathbf{v}^{(2)} \big)\big)$, where 
						\begin{equation}
							\begin{split}
								\mathbf{v}^{(1)}=F_{A_1}^{-1}e^{\Phi_1/h}(1+r_1), \quad \mathbf{v}^{(2)} =F_{\overline{A_2}}e^{\overline{\Phi_2}/h}(1+\wt r_2) 
							\end{split}
						\end{equation}
						are CGO solutions for the first linearized equation, for sufficiently small $h>0$, where $r_1,\wt r_2$ are remainders (see Lemma \ref{lemma: CGO solution summarize} for the formulas). We have 
						\begin{equation}\label{eq:S1-S4}
							\begin{split}
								&\quad \,  \tr  \big( \big( D^2  \mathbf{v}^{(1)}\big)  \big( D^2 \mathbf{v}^{(2)} \big)\big) \\
								&=   \tr  \big(  D^2  (F_{A_1}^{-1}e^{\Phi_1/h}) D^2 (F_{\overline{A_2}}e^{\overline{\Phi_2}/h}) \big)+ 
								\tr  \big(  D^2  (F_{A_1}^{-1}e^{\Phi_1/h}r_1) D^2 (F_{\overline{A_2}}e^{\overline{\Phi_2}/h}) \big) \\
								&\quad \, + \tr  \big(  D^2  (F_{A_1}^{-1}e^{\Phi_1/h}) D^2 (F_{\overline{A_2}}e^{\overline{\Phi_2}/h}\wt r_2) \big) +\tr  \big(  D^2  (F_{A_1}^{-1}e^{\Phi_1/h}r_1) D^2 (F_{\overline{A_2}}e^{\overline{\Phi_2}/h}\wt r_2) \big), 
							\end{split}
						\end{equation}
						where we used the matrix representation formula \eqref{Hessian matrix} for the Hessian to derive the above identities. 
						
						Let us begin by analyzing the first term in \eqref{integral id.}, which can be written as the sum 
						\begin{equation}
							\int_{U} G \mathbf{v}^\ast  \tr  \big( \big(  D^2  \mathbf{v}^{(1)} \big)  \big( D^2 \mathbf{v}^{(2)} \big) \big) \, dx:=S_1+S_2 + S_3 + S_4,
						\end{equation}
						where
						\begin{equation}\label{S_1 to S_4 definitions}
							\begin{split}
								S_1 &: = \int_{U} G\mathbf{v}^\ast \tr  \big(  D^2  (F_{A_1}^{-1}e^{\Phi_1/h}) D^2 (F_{\overline{A_2}}e^{\overline{\Phi_2}/h}) \big)\, dx, \\
								S_2 & := \int_{U}G\mathbf{v}^\ast \tr  \big(  D^2  (F_{A_1}^{-1}e^{\Phi_1/h}r_1) D^2 (F_{\overline{A_2}}e^{\overline{\Phi_2}/h}) \big)\,  dx, \\
								S_3 & := \int_{U}G\mathbf{v}^\ast  \tr  \big(  D^2  (F_{A_1}^{-1}e^{\Phi_1/h}) D^2 (F_{\overline{A_2}}e^{\overline{\Phi_2}/h}\wt r_2) \big)\, dx,\\
								S_4 & := \int_{U}G\mathbf{v}^\ast \tr  \big(  D^2  (F_{A_1}^{-1}e^{\Phi_1/h}r_1) D^2 (F_{\overline{A_2}}e^{\overline{\Phi_2}/h}\wt r_2) \big)\, dx.
							\end{split}
						\end{equation}
						Here, $r_1$ and $\wt r_2$ are the remainder terms in the CGO solutions satisfying %\f{Do we use the notation $\wt s$ somewhere else? let's change the notation from $r_2$ to $\wt r_2$!}
						\begin{equation}\label{eq: representation of r_1 and r_2}
							r_1 =-\overline{\p}_\psi^{-1}V's_1 \quad \text{and}\quad \wt r_2 =-\p^{-1}_\psi V' \wt s_2
						\end{equation}
						with $s_1$ and $\wt s_2$ satisfy the decay estimates as constructed in Section \ref{sec: CGOs}. In the rest of the article, the function $\psi$ is chosen as
						\begin{equation}\label{weight function psi}
							\psi=\psi(x_1,x_2)=\frac{x_1x_2}{2}.
						\end{equation} 
						%Let us split the analysis separately.

						For the solution to the adjoint equation, we take
						\begin{equation}\label{v_ast}
							\mathbf{v}^\ast
							= F_{A^\ast}^{-1} \, e^{\Phi^\ast/h} \, (a^\ast + r^{\ast}),
						\end{equation}
						where $h>0$ is sufficiently small and $\Phi^\ast$ is the holomorphic phase function
						\begin{equation}
							\Phi^\ast(z) := - z + \frac{1}{8} z^2.
						\end{equation}
						By scaling the coordinates again, if needed, we may assume that $\Phi^\ast$ neither has critical points in $U$. 
						%and $e^{\Phi^\ast/h}$ is holomorphic.  
						Here $F_{A^\ast}^{-1}$ is a smooth, nowhere-vanishing function as given in Lemma~\ref{lemma: CGO solution summarize}, and the remainder $r^\ast$ satisfies the same better decay estimates as $r_1$.  
						In the following, we will analyze the contributions $S_k$ for $k = 1, 2, 3, 4$ separately.
						\\
						
						{\it Step 1. Analysis of $S_1$.}\\

						\noindent 
						We define the notation
						\begin{equation}\label{def: tilde G_0 in conformal}
							\widetilde{G}_0 := F_{A_1}^{-1} F_{\overline{A_2}} G=\mu^{-1}F_{A_1}^{-1} F_{\overline{A_2}}  (1-c^{-2}).
						\end{equation}
						Using the expression \eqref{Hessian matrix in complex variable}, a direct computation yields
						\begin{equation}
							\begin{split}
								S_1 &= \int_{U} G \mathbf{v}^\ast  \mathrm{tr} \big\{  
								\big[ A \partial^2 (F_{A_1}^{-1} e^{\Phi_1/h}) 
								+ B \overline{\partial}^2(F_{A_1}^{-1} e^{\Phi_1/h}) 
								+ 2 I_{2\times 2} \partial \overline{\partial}(F_{A_1}^{-1} e^{\Phi_1/h}) \big] \\
								&\qquad \qquad \cdot \big[ A \partial^2 (F_{\overline{A_2}} e^{\overline{\Phi_2}/h}) 
								+ B \overline{\partial}^2(F_{\overline{A_2}} e^{\overline{\Phi_2}/h}) 
								+ 2 I_{2\times 2} \partial \overline{\partial}(F_{\overline{A_2}} e^{\overline{\Phi_2}/h}) \big] 
								\big\} \, dx \\
								&= \int_{U} G \mathbf{v}^\ast  \mathrm{tr} \big\{ 
								\big[ A \big(F_{A_1}^{-1} \partial^2 e^{\Phi_1/h} + 2 \partial F_{A_1}^{-1} \partial e^{\Phi_1/h} + e^{\Phi_1/h} \partial^2 F_{A_1}^{-1} \big) \\
								&\qquad\qquad \quad + B e^{\Phi_1/h} \overline{\partial}^2 F_{A_1}^{-1} 
								+ 2 I_{2\times 2} \big(e^{\Phi_1/h} \partial \overline{\partial} F_{A_1}^{-1} + \overline{\partial} F_{A_1}^{-1} \partial e^{\Phi_1/h} \big) \big] \\
								&\qquad \qquad \cdot \big[ B \big(F_{\overline{A_2}} \overline{\partial}^2 e^{\overline{\Phi_2}/h} + 2 \overline{\partial} F_{\overline{A_2}} \overline{\partial} e^{\overline{\Phi_2}/h} + e^{\overline{\Phi_2}/h} \overline{\partial}^2 F_{\overline{A_2}} \big) \\
								&\qquad\qquad \quad + A e^{\overline{\Phi_2}/h} \partial^2 F_{\overline{A_2}} 
								+ 2 I_{2\times 2} \big(e^{\overline{\Phi_2}/h} \partial \overline{\partial} F_{\overline{A_2}} + \partial F_{\overline{A_2}} \overline{\partial} e^{\overline{\Phi_2}/h} \big) \big]
								\big\} \, dx 
							\end{split}
						\end{equation}
						where $A,B$ are complex-valued matrices given in \eqref{matrices A B}, which satisfy
						\begin{equation}\label{useful matrices computations}
							\tr(AB) = 4, \quad \tr (AA)=\tr(BB)=\tr(A)=\tr(B)=0.
						\end{equation}
						The above relations are used to reduce the computations throughout the asymptotic analysis.
						Next, let us write 
						\begin{equation}
							S_1 := S_{1,1} + S_{1,2},
						\end{equation}
						where
						\begin{equation}
							\begin{split}
								S_{1,1} &= \int_{U} \widetilde{G}_0 \mathbf{v}^\ast \underbrace{\mathrm{tr}(AB)}_{=4} 
								\partial^2(e^{\Phi_1/h})  \overline{\partial}^2(e^{\overline{\Phi_2}/h}) \, dx \\
								&= 4 \int_{U} \partial \overline{\partial}(\widetilde{G}_0 \mathbf{v}^\ast) 
								\partial(e^{\Phi_1/h})  \overline{\partial}(e^{\overline{\Phi_2}/h}) \, dx \\
								&= \frac{1}{h^2} \int_{U} 
								\Big( \widetilde{G}_0 \Delta \mathbf{v}^\ast 
								+ 4 \partial \mathbf{v}^\ast \overline{\partial} \widetilde{G}_0 
								+ 4 \overline{\partial} \mathbf{v}^\ast \partial \widetilde{G}_0 
								+ \mathbf{v}^\ast \Delta \widetilde{G}_0\Big) 
								\partial \Phi_1  \overline{\partial} \overline{\Phi_2} 
								e^{(\Phi_1+\overline{\Phi_2})/h}  \, dx \\
								&= \frac{1}{h^2} \int_{U} 
								\Big( -\widetilde{G} _0\nabla \cdot (\mathbf{X}_g \mathbf{v}^\ast)
								+ 4 \partial \mathbf{v}^\ast \overline{\partial} \widetilde{G}_0
								+ 4 \overline{\partial} \mathbf{v}^\ast \partial \widetilde{G}_0
								+ \mathbf{v}^\ast \Delta \widetilde{G}_0 \Big)\\
								&\qquad \qquad \quad \cdot \partial \Phi_1  \overline{\partial} \overline{\Phi_2} e^{(\Phi_1+\overline{\Phi_2})/h}  \, dx,
							\end{split}
						\end{equation}
						and we have employed integration by parts and the adjoint equation \eqref{adjoint first linear eq in isothermal} in the above identities. 
						
						Let us emphasize that, throughout the remainder of the proof, integration by parts will produce no boundary contributions due to the boundary determination established in Section~\ref{sec: boundary determination}. Furthermore, we note that the term $S_{1,2} = S_1 - S_{1,1}$ consists of integrals in which at least one derivative falls on either $F_{A_1}^{-1}$ or $F_{\overline{A_2}}$. As we shall demonstrate, these contributions are of lower order and do not affect the leading order behavior. For convenience in future computations, we collect the following identities:
						\begin{equation}\label{useful elements in computations}
							\begin{split}
								&	\partial \Phi_1 = 1 + \frac{1}{4} z, \quad 
								\overline{\partial} \Phi_2 = -\frac{1}{2} \overline{z}, \quad 
								\partial \Phi^\ast = -1 + \frac{1}{4} z, \\
								&\p^2 \Phi_1 = \p^2 \Phi^\ast  =\frac{1}{4}, \quad \op^2 \Phi_2 =-\frac{1}{2}.
							\end{split}
						\end{equation}

						{\it Step 1-1. Analysis of $S_{1,1}$.}\\
						
						\noindent	Using the CGO solution of the form \eqref{v_ast}, we can write 
						$$
						S_{1,1}:=S_{1,1}^m + S_{1,1}^r,
						$$ 
						where 
						\begin{equation}
							\begin{split}
								S_{1,1}^m&:= \frac{1}{h^2} \int_{U} \Big(  - \wt G_0 \nabla \cdot (\mathbf{X}_g F_{A^\ast}^{-1}e^{\Phi^\ast/h}) +  4\p (F_{A^\ast}^{-1}e^{\Phi^\ast/h})\overline{\p}\wt G_0\\
								&\qquad \qquad + 4\overline{\p}(F_{A^\ast}^{-1}e^{\Phi^\ast/h})\p \wt G_0 + F_{A^\ast}^{-1}e^{\Phi^\ast/h}  \Delta \wt G_0\Big) (\p \Phi_1) ( \overline{\p}\overline{\Phi_2} ) e^{(\Phi_1+\overline{\Phi_2})/h}\, dx,
							\end{split}
						\end{equation}
						and 
						\begin{equation}\label{S_1,1r}
							\begin{split}
								S_{1,1}^r&:= \frac{1}{h^2} \int_{U} \Big(  - \wt G_0 \nabla \cdot (\mathbf{X}_g F_{A^\ast}^{-1}e^{\Phi^\ast/h}r^\ast) +  4\p (F_{A^\ast}^{-1}e^{\Phi^\ast/h}r^\ast)\overline{\p}\wt G_0\\
								&\qquad \qquad + 4\overline{\p}(F_{A^\ast}^{-1}e^{\Phi^\ast/h}r^\ast)\p \wt G_0 + F_{A^\ast}^{-1}e^{\Phi^\ast/h} r^\ast \Delta \wt G_0\Big) (\p \Phi_1) ( \overline{\p}\overline{\Phi_2} ) e^{(\Phi_1+\overline{\Phi_2})/h}\, dx.
							\end{split}
						\end{equation}
						We will show that $S_{1,1}^m$ is a governing term in the asymptotic analysis.
						
						To proceed, using $\op e^{\Phi^\ast/h}=0$, we can compute $S_{1,1}^m$ as 
						\begin{equation}\label{comp S_1,1 m-1}
							\begin{split}
								\frac{h}{2\pi}S_{1,1}^m &= \frac{1}{2\pi h} \int_{U} \Big(  - \wt G_0 \nabla \cdot (F_{A^\ast}^{-1}\mathbf{X}_g) e^{\Phi^\ast/h} -\frac{1}{h}\wt G_0 F_{A^{\ast}}^{-1} \mathbf{X}_g \cdot \nabla\Phi^\ast e^{\Phi^\ast/h}  \\
								&\qquad \qquad \qquad +  4\p (F_{A^\ast}^{-1}e^{\Phi^\ast/h})\overline{\p}\wt G_0+4e^{\Phi^\ast/h}\overline{\p}F_{A^\ast}^{-1}\p \wt G_0 + F_{A^\ast}^{-1}e^{\Phi^\ast/h}  \Delta \wt G_0\Big) \\
								&\qquad \qquad \qquad \cdot(\p \Phi_1) ( \overline{\p}\overline{\Phi_2} ) e^{(\Phi_1+\overline{\Phi_2})/h}\, dx\\
								&=\frac{1}{2\pi h}\int_{U}\Big(  - \wt G_0 \nabla \cdot (F_{A^\ast}^{-1}\mathbf{X}_g) -\frac{1}{h}\wt G_0 F_{A^{\ast}}^{-1} \mathbf{X}_g \cdot \nabla\Phi^\ast \\
								&\qquad \qquad \qquad +  4 (\p F_{A^\ast}^{-1}+\frac{1}{h}F_{A^\ast}^{-1}\p \Phi^\ast)\overline{\p}\wt G_0+4\overline{\p}F_{A^\ast}^{-1}\p \wt G_0 + F_{A^\ast}^{-1} \Delta \wt G_0\Big) \\
								&\qquad \qquad \qquad \cdot (\p \Phi_1) ( \overline{\p}\overline{\Phi_2} ) e^{\mathsf{i}x_1x_2/h}\, dx  \\
								&= \frac{1}{2\pi h}\int_{U} \bigg\{ \frac{1}{h}\big[ F_{A^\ast}^{-1}\big( -1+\frac{z}{4}\big)\op \wt G_0  -F_{A^\ast}^{-1}(\mathbf{X}_g \cdot \nabla \Phi^\ast)\wt G_0  \big] \\
								&\qquad \qquad \qquad +\big[  - \wt G_0 \nabla \cdot (F_{A^\ast}^{-1}\mathbf{X}_g) + 4 \p F_{A^\ast}^{-1} \op \wt G_0 +4\op F_{A^\ast}^{-1}\p \wt G_0 \\
								&\qquad \qquad \qquad  + F_{A^\ast}^{-1}\Delta \wt G_0 \big]  \bigg\} \cdot \underbrace{ (1+\frac{1}{4}z)(-\frac{1}{2}\overline{z})}_{\text{By \eqref{useful elements in computations}}}e^{\mathsf{i}x_1x_2/h}\, dx,
							\end{split}
						\end{equation}
						where we used $\Phi_1+\overline{\Phi_2}+\Phi^\ast =\frac{1}{4}\big(z^2-\overline{z}^2\big)= \mathsf{i}x_1x_2=2\mathsf{i}\psi$.
						Applying the stationary phase expansion \eqref{stationary phase expansion} to the identity \eqref{comp S_1,1 m-1}, we can see that 
						\begin{equation}\label{stationary phase of S_1,1^m}
							\begin{split}
								&\quad \, \frac{1}{2\pi h}S_{1,1}^m \\
								&= \big( \overline{\p}^2 -\p^2  \big) \Big\{ \big(F_{A^\ast}^{-1}\big( -1+\frac{z}{4}\big)\op \wt G_0  -F_{A^\ast}^{-1}(\mathbf{X}_g \cdot \nabla \Phi^\ast)\wt G_0  \big) \\
								&\qquad \qquad \qquad \cdot  (1+\frac{1}{4}z)(-\frac{1}{2}\overline{z}) \Big\} \Big|_{z=0} +\mathcal{O}(h)\\
								&= \overline{\p}^2  \Big\{ \big(F_{A^\ast}^{-1}\big( -1+\frac{z}{4}\big)\op \wt G_0  -F_{A^\ast}^{-1}(\mathbf{X}_g \cdot \nabla \Phi^\ast)\wt G_0  \big) (1+\frac{1}{4}z)(-\frac{1}{2}\overline{z}) \Big\} \Big|_{z=0} +\mathcal{O}(h) \\
								&= -\frac{1}{2}\overline{\p}\Big\{ \big(F_{A^\ast}^{-1}\big( -1+\frac{z}{4}\big)\op \wt G_0  -F_{A^\ast}^{-1}(\mathbf{X}_g \cdot \nabla \Phi^\ast)\wt G_0  \big) (1+\frac{1}{4}z)\Big\} \Big|_{z=0}  + \mathcal{O}(h) \\
								&= \frac{1}{2}\overline{\p} \big(F_{A^\ast}^{-1}\op \wt G _0 + F_{A^\ast}^{-1}(\mathbf{X}_g \cdot \nabla \Phi^\ast)\wt G_0  \big) \big|_{z=0} + \mathcal{O}(h),
							\end{split}
						\end{equation}
						where we used 
						\begin{equation}
							\begin{split}
								&\quad \, \p^2 \Big\{ \big(F_{A^\ast}^{-1}\big( -1+\frac{z}{4}\big)\op \wt G_0  -F_{A^\ast}^{-1}(\mathbf{X}_g \cdot \nabla \Phi^\ast)\wt G_0  \big) (1+\frac{1}{4}z)(-\frac{1}{2}\overline{z}) \Big\} \Big|_{z=0}\\
								&= \Big\{\big(-\frac{1}{2}\overline{z}\big) \p^2 \Big[ \big(F_{A^\ast}^{-1}\big( -1+\frac{z}{4}\big)\op \wt G_0  -F_{A^\ast}^{-1}(\mathbf{X}_g \cdot \nabla \Phi^\ast)\wt G_0  \big) (1+\frac{1}{4}z) \Big]\Big\} \Big|_{z=0}\\
								&=0.
							\end{split}
						\end{equation}
						Furthermore, in view of \eqref{stationary phase of S_1,1^m}, one can conclude that 
						\begin{equation}\label{limit for S_1^m}
							\begin{split}
								\lim_{h\to 0}\big( hS_{1,1}^m \big) =\pi\overline{\p} \big(F_{A^\ast}^{-1}\op \wt G_0  + F_{A^\ast}^{-1}(\mathbf{X}_g \cdot \nabla \Phi^\ast)\wt G_0  \big) \big|_{z=0}.
							\end{split}
						\end{equation}
						
						Next, we want to show $S_{1,1}^r$ has a faster decay in $h$ than $S_{1,1}^m$. Recalling that $S_{1,1}^r$ is given by \eqref{S_1,1r}, which has a very similar form as $S_{1,1}^m$. The only difference is that the integral contains an extra error term, $r^\ast$, and its first derivative. Thanks to Lemma \ref{lem: asymptotic of remainders with linear} and analysis in Section \ref{sec: CGOs}, the remainder term $r^\ast$ has better decay properties, so that one can apply the stationary phase formula to ensure $\lim_{h\to0}\big( hS_{1,1}^r\big) =0$.
						Hence, we can ensure 
						\begin{equation}\label{limit for S_1,1}
							\begin{split}
								\lim_{h\to 0}\big( hS_{1,1} \big) =\pi\overline{\p} \big(F_{A^\ast}^{-1}\op \wt G_0  + F_{A^\ast}^{-1}(\mathbf{X}_g \cdot \nabla \Phi^\ast)\wt G_0  \big) \big|_{z=0},
							\end{split}
						\end{equation}
						which will play an essential role in the recovery of the conformal factor.\\
						
						{\it Step 1-2: Analysis of $S_{1,2}$.}\\

						\noindent Since every term in the integrand of $S_{1,2}$ involves at least one derivative acting on either $F_{A_1}^{-1}$ or $F_{\overline{A_2}}$, it is expected that many of these terms contribute only to lower-order effects and do not influence the leading-order behavior. To illustrate this, let us examine representative terms in $S_{1,2}$ that exhibit the highest possible asymptotic order with respect to the small parameter $h$. The asymptotic behavior of the remaining terms can be analyzed in a similar manner, and in most cases, they exhibit even faster decay.
						It is straightforward to verify that there are two terms of order $\mathcal{O}(1/h^3)$, and we compute them term by term below.

						We denote that 
						\begin{equation}\label{def: tilde G_1 in conformal}
							\wt G_1 :=G F_{A^\ast}^{-1}F_{A_1}^{-1}\op F_{\overline{A_2}}=\mu^{-1}F_{A^\ast}^{-1}F_{A_1}^{-1}\op F_{\overline{A_2}}(1-c^{-2})
						\end{equation}
						is a bounded function independent of $h>0$, then an integration by parts with respect to $\op$ and the stationary phase expansion imply that
						\begin{equation}\label{comp S_2,1-1}
							\begin{split}
								&\quad \, 2\int_{U} GF_{A_1}^{-1}\op F_{\overline{A_2}}\mathbf{v}^\ast \underbrace{\tr(AB) }_{=4} \p^2 (e^{\Phi_1/h} )  \op (e^{\overline{\Phi_2}/h} ) \, dx \\
								&= -8\int_{U}  (\op \wt G_1) e^{\Phi^\ast/h}\big( \frac{1}{h^2}(1+\frac{z}{4})^2 +\frac{1}{4h}\big) e^{(\Phi_1+\overline{\Phi_2})/h}\, dx +\mathcal{O}(1) \\
								&= -\frac{16\pi}{h}\op \wt G_1\big|_{z=0} +\mathcal{O}(1),
							\end{split}
						\end{equation}
						as $h\to 0$, where we utilize $\op e^{\Phi^\ast/h}=0$ and 
						\begin{equation}
							\mathbf{v}^\ast  = F_{A^\ast}^{-1}e^{\Phi^\ast/h}(1+r^\ast) ,            
						\end{equation}
						where the integral including $r^\ast$ contribute $\mathcal{O}(1)$ by Lemma \ref{lem: asymptotic of remainders with linear} in the middle line of \eqref{comp S_2,1-1}.
						Thus, we obtain 
						\begin{equation}
							\begin{split}
								&\quad \, \lim_{h\to 0}\bigg( 2h\int_{U} GF_{A_1}^{-1}\op F_{\overline{A_2}} \mathbf{v}^\ast \tr(AB)  \p^2 (e^{\Phi_1/h} )  \op (e^{\overline{\Phi_2}/h} ) \, dx\bigg)= -16\pi \op\wt G_1 \big|_{z=0}.
							\end{split}
						\end{equation}
						Similarly, we can use the same approach as above for the other term to obtain 
						\begin{equation}
							\begin{split}
								&\quad\,  2\int_{U} G\p F_{A_1}^{-1}F_{\overline{A_2}} \tr(AB) \mathbf{v}^\ast \p e^{\Phi_1/h} \op^2 e^{\overline{\Phi_2/h}}\, dx\\ 
								&=	\frac{4}{h^2}\int_{U}\underbrace{\op \big( G\p F_{A_1}^{-1}F_{\overline{A_2}}\mathbf{v}^\ast \big)}_{=\mathcal{O}(1)} \big( 1+\frac{z}{4}\big)\overline{z} e^{(\Phi_1+\overline{\Phi_2})/h} \, dx \\
								&=\mathcal{O}(1).
							\end{split}
						\end{equation}
						Therefore, we can obtain 
						\begin{equation}
							\begin{split}
								\lim_{h\to 0}\bigg(	2h \int_{U} G\p F_{A_1}^{-1}F_{\overline{A_2}} \tr(AB) \mathbf{v}^\ast \p e^{\Phi_1/h} \op^2 e^{\overline{\Phi_2/h}}\, dx \bigg)=0.
							\end{split}
						\end{equation}
						Since the above two terms contribute $\mathcal{O}(1/h)$ and $\mathcal{O}(1)$, and it is not hard to see the rest terms in $S_{1,2}$ have at least one more $h$ factor. This implies 
						\begin{equation}\label{limit of S_1,2}
							\begin{split}
								\lim_{h\to 0}\big( hS_{1,2}\big) =-16\pi \op \wt G_1\big|_{z=0}.
							\end{split}
						\end{equation}
						Therefore, combining all the analyses, we can ensure 
						\begin{equation}\label{limit for S_1}
							\begin{split}
								\lim_{h\to 0} \big( hS_1\big) &=\pi\overline{\p} \big(F_{A^\ast}^{-1}\op \wt G_0  + F_{A^\ast}^{-1}(\mathbf{X}_g \cdot \nabla \Phi^\ast)\wt G_0  \big) \big|_{z=0} -16\pi \op \wt G_1 \big|_{z=0}.
							\end{split}
						\end{equation}
						We remark that the terms $\wt G_0$ and $\wt G_1$ on the right-hand side each contain $(1-c^{-2})$ as a multiplicative factor. Ultimately, our analysis will lead to a partial differential equation for $(1-c^{-2})$.

						\bigskip

						{\it Step 2. Analysis of $S_2$.}\\

						\noindent Recall that $A$ and $B$ are $2 \times 2$ complex-valued matrices (see \eqref{matrices A B}), and $D^2$ denotes the Hessian operator in $\mathbb{R}^2$. The contributions of $S_2$, arising from the remainder term $r_1$, are associated with the linear part of the phase function and can be handled similarly to previous terms. The more challenging components are $S_3$ and $S_4$, whose analysis will be presented in the subsequent sections.
						For $S_2$, let us compute the matrix $BD^2$ as 
						\begin{equation}
							\begin{split}
								BD^2&= \left( \begin{matrix}
									1  & -\mathsf{i} \\
									-\mathsf{i}   & - 1\end{matrix} \right)  \left( \begin{matrix}
									\LC \p +\overline{\p}\RC^2  & \mathsf{i}\LC \p^2 -\overline{\p}^2 \RC \\
									\mathsf{i}\LC \p^2 -\overline{\p}^2 \RC & -\LC \p -\overline{\p}\RC^2 \end{matrix} \right) \\
								&= \left( \begin{matrix}
									2 \p^2 +2 \overline{\p}\p  & \mathsf{i}(2\p^2 -2\p \overline{\p}) \\
									-\mathsf{i}(2\p^2 +2\p\overline{\p})   & 2\p^2 -2\p \overline{\p}\end{matrix} \right), 
							\end{split}
						\end{equation}
						so that
						\begin{equation}\label{trace BD^2}
							\tr (BD^2) =4\p^2.
						\end{equation}
						With the above computations, applying an integration by parts formula and $\p (e^{\overline{\Phi_2}/h})=0$, we can see 
						\begin{equation}\label{S_2 comp 1}
							\begin{split}
								S_2&= \int_{U}G \mathbf{v}^{\ast} \tr  \big( D^2  (F_{A_1}^{-1} e^{\Phi_1/h}r_1) \big)D^2(F_{\overline{A_2}}e^{\overline{\Phi_2}/h})\, dx\\
								&=  \int_{U} G \mathbf{v}^\ast \tr \big\{ \big( D^2  (F_{A_1}^{-1} e^{\Phi_1/h}r_1) \big)\big[ B \big(F_{\overline{A_2}} \overline{\partial}^2 e^{\overline{\Phi_2}/h} + 2 \overline{\partial} F_{\overline{A_2}} \overline{\partial} e^{\overline{\Phi_2}/h} + e^{\overline{\Phi_2}/h} \overline{\partial}^2 F_{\overline{A_2}} \big) \\
								&\qquad\qquad \qquad \quad + A e^{\overline{\Phi_2}/h} \partial^2 F_{\overline{A_2}} 
								+ 2 I_{2\times 2} \big(e^{\overline{\Phi_2}/h} \partial \overline{\partial} F_{\overline{A_2}} + \partial F_{\overline{A_2}} \overline{\partial} e^{\overline{\Phi_2}/h} \big) \big] \big\}\, dx\\
								&=: S_{2,1}+S_{2,2}
							\end{split}
						\end{equation}
						where 
						\begin{equation}
							\begin{split}
								S_{2,1}:=\int_{U} GF_{\overline{A_2}} \mathbf{v}^\ast \tr\big( BD^2  (F_{A_1}^{-1} e^{\Phi_1/h}r_1) \overline{\partial}^2 e^{\overline{\Phi_2}/h}\big)\, dx
							\end{split}
						\end{equation}
						and 
						\begin{equation}
							\begin{split}
								S_{2,2}&:= \int_{U} G \mathbf{v}^\ast \tr \big\{  \big( D^2  (F_{A_1}^{-1} e^{\Phi_1/h}r_1) \big) \big[ B \big( 2 \overline{\partial} F_{\overline{A_2}} \overline{\partial} e^{\overline{\Phi_2}/h} + e^{\overline{\Phi_2}/h} \overline{\partial}^2 F_{\overline{A_2}} \big) \\
								&\qquad\qquad \quad + A e^{\overline{\Phi_2}/h} \partial^2 F_{\overline{A_2}} 
								+ 2 I_{2\times 2} \big(e^{\overline{\Phi_2}/h} \partial \overline{\partial} F_{\overline{A_2}} + \partial F_{\overline{A_2}} \overline{\partial} e^{\overline{\Phi_2}/h} \big) \big]  \big\}\, dx.
							\end{split}
						\end{equation}
						Notice that the integral $S_{2,2}$ contains at least one derivative of $F_{\overline{A_2}}$, and generate one extra $h$ than the first integral in the right-hand side of \eqref{S_2 comp 1}. 
						Let us analyze $S_{2,1}$ as follows. Using \eqref{trace BD^2}, we have 
						\begin{equation}
							\begin{split}
								S_{2,1} &=4\int_{U} GF_{\overline{A_2}}\mathbf{v}^\ast \p^2  (F_{A_1}^{-1} e^{\Phi_1/h}r_1) \overline{\partial}^2 e^{\overline{\Phi_2}/h}\, dx\\
								&= 4\int_{U} \p \op \big( \wt G_2\mathbf{v}^\ast \big)\p  (F_{A_1}^{-1} e^{\Phi_1/h}r_1) \overline{\partial} e^{\overline{\Phi_2}/h}\, dx \\
								&\quad \, + 4\int_{U}\p \big( \wt G_2 \mathbf{v}^\ast\big) \p \big[ \op F_{A_1}^{-1} e^{\Phi_1/h}r_1 + F_{A_1}^{-1}e^{\Phi_1/h}\op r_1 \big] \op e^{\overline{\Phi_2}/h}\, dx ,
							\end{split}
						\end{equation}
						where 
						\begin{equation}
							\wt G_2 := GF_{\overline{A_2}}.
						\end{equation}
						Observing that the analysis of the term $S_{2,1}$ closely parallels that of $S_1$, the only difference being the appearance of the remainder term $r_1$, which enjoys better decay properties for $h > 0$ as described in Lemma \ref{lem: asymptotic of remainders with linear}, we can proceed analogously.  
						By repeating exactly the same arguments used in the analysis of $S_1$, and taking into account the improved decay of $r_1$ and its derivatives, we immediately obtain  
						\[
						\lim_{h \to 0} \left( h S_{2,1} \right) = 0.
						\]  
						For $S_{2,2}$, as explained earlier, there is an extra factor of $h$ compared to $S_{2,1}$, so we omit the detailed derivation. In short, one can ensure that $\lim_{h \to 0} \big( h S_{2,2} \big) = 0$,
						which in turn yields  
						\begin{equation} \label{limit for S_2}
							\lim_{h \to 0} \big( h S_2 \big) = 0.
						\end{equation}

						{\it Step 3. Analysis of $S_3$.}\\ 
						
						\noindent Similar to the analysis for $S_2$, we have a similar formula for $AD^2$, such that 
						\begin{equation}\label{trace AD^2}
							\tr(AD^2)=4\overline{\p}^2.
						\end{equation} 
						Different from the analysis of $S_2$, using \eqref{Hessian matrix in complex variable} again, an alternative integration by parts formula yields that 
						\begin{equation}\label{S_3 comp1}
							\begin{split}
								S_3 &= \int_{U}G\mathbf{v}^\ast  \tr  \big(  D^2  (F_{A_1}^{-1}e^{\Phi_1/h}) D^2 (F_{\overline{A_2}}e^{\overline{\Phi_2}/h}\wt r_2) \big)\, dx\\
								&= \int_{U}G\mathbf{v}^\ast\mathrm{tr} \big\{ 
								\big[ A \big(F_{A_1}^{-1} \partial^2 e^{\Phi_1/h} + 2 \partial F_{A_1}^{-1} \partial e^{\Phi_1/h} + e^{\Phi_1/h} \partial^2 F_{A_1}^{-1} \big) \\
								&\qquad\qquad \quad + B e^{\Phi_1/h} \overline{\partial}^2 F_{A_1}^{-1} 
								+ 2 I_{2\times 2} \big(e^{\Phi_1/h} \partial \overline{\partial} F_{A_1}^{-1} + \overline{\partial} F_{A_1}^{-1} \partial e^{\Phi_1/h} \big) \big] \\
								&\qquad \qquad \quad \cdot  D^2 (F_{\overline{A_2}}e^{\overline{\Phi_2}/h}\wt r_2) \big)\, dx\\
								&=: S_{3,1}+S_{3,2},
							\end{split}
						\end{equation}
						where
						\begin{equation}
							\begin{split}
								S_{3,1}:=\int_{U} GF_{A_1}^{-1}\mathbf{v}^\ast \tr\big( AD^2(F_{\overline{A_2}}e^{\overline{\Phi_2}/h}\wt r_2) \p^2 e^{\Phi_1/h} \big) \, dx,
							\end{split}
						\end{equation}
						and 
						\begin{equation}
							\begin{split}
								S_{3,2}&:=\int_{U} G\mathbf{v}^\ast\mathrm{tr} \big\{ 
								\big[ A \big( 2 \partial F_{A_1}^{-1} \partial e^{\Phi_1/h} + e^{\Phi_1/h} \partial^2 F_{A_1}^{-1} \big) \\
								&\qquad\qquad \quad + B e^{\Phi_1/h} \overline{\partial}^2 F_{A_1}^{-1} 
								+ 2 I_{2\times 2} \big(e^{\Phi_1/h} \partial \overline{\partial} F_{A_1}^{-1} + \overline{\partial} F_{A_1}^{-1} \partial e^{\Phi_1/h} \big) \big] \\
								&\qquad \qquad \quad \cdot  D^2 (F_{\overline{A_2}}e^{\overline{\Phi_2}/h}\wt r_2) \big)\, dx.
							\end{split}
						\end{equation}
						We also analyze $S_{3,1}$ and $S_{3,2}$ separately.

						Let us review an integration by parts as in \cite[Section 3.1]{LW24_quasi}, which yields
						\begin{equation}\label{eq:integration by parts for d}
							\begin{split}
								\int_{U} \big( \overline{\p}^{-1} f \big) \varphi \, dx =-\int_{U} f \big( \overline{\p}^{-1} \varphi\big)\, dx,
							\end{split}
						\end{equation}
						for $f\in L^1$ and $\varphi \in L^p$ for some $p>2$, such that both $f$ and $\varphi$ vanish on the boundary $\p U$.
						For $S_{3,1}$, using \eqref{trace AD^2} and an integration by parts, we have 
						\begin{equation}
							\begin{split}
								S_{3,1}&=4 \int_{U} GF_{A_1}^{-1}\mathbf{v}^\ast \op^2(F_{\overline{A_2}}e^{\overline{\Phi_2}/h}\wt r_2) \p^2 e^{\Phi_1/h} \, dx\\
								&= \underbrace{4\int_{U}\op^2 \big( GF_{A_1}^{-1}\mathbf{v}^\ast\big) F_{\overline{A_2}}\Big( \frac{1}{h^2}\big(1+\frac{z}{4}\big)^2 + \frac{1}{4h}\Big) e^{(\Phi_1+\overline{\Phi_2})/h}\wt r_2\, dx}_{\text{By }\op e^{\Phi_1/h}=0}\\
								&=4\int_{U}\wt G_3\Big( \frac{1}{h^2}\big(1+\frac{z}{4}\big)^2 + \frac{1}{4h}\Big) e^{\mathsf{i}x_1x_2/h}\wt r_2\, dx
							\end{split}
						\end{equation}
						where 
						\begin{equation}
							\begin{split}
								\wt G_3 := \big[ \op^2 \big( G F_{A_1}^{-1}\big)(1+r^\ast)  +2\op \big( G F_{A_1}^{-1}\big) \op r^\ast + G F_{A_1}^{-1}\op^2 r^\ast \big] F_{\overline{A_2}}=\mathcal{O}_{L^2}(1),
							\end{split}
						\end{equation}
						we used the properties of $\mathbf{v}^\ast$, $r^\ast$ again.
						To proceed, using \eqref{eq: representation of r_1 and r_2}, we can rewrite $S_{3,1}$ into 
						\begin{equation}\label{comp of S_3,1}
							\begin{split}
								S_{3,1}&=4\int_{U}\wt G_3\Big( \frac{1}{h^2}\big(1+\frac{z}{4}\big)^2 + \frac{1}{4h}\Big) e^{\mathsf{i}x_1x_2/h}\wt r_2\, dx\\
								&=-4\int_{U}\wt G_3\Big( \frac{1}{h^2}\big(1+\frac{z}{4}\big)^2 + \frac{1}{4h}\Big) e^{\mathsf{i}x_1x_2/h}\p_\psi^{-1}V'\wt s_2\, dx\\
								&= 4\int_{U} \p^{-1}\Big[ \wt G_3\Big( \frac{1}{h^2}\big(1+\frac{z}{4}\big)^2 + \frac{1}{4h}\Big)e^{\mathsf{i}x_1x_2/h} \Big] V'\wt s_2 e^{\mathsf{i}x_1x_2/h}\, dx\\
								&=\mathcal{O}(h^{-1+\eps}), \quad \text{as} \quad h\to 0,
							\end{split}
						\end{equation}
						where we used \cite[Lemma 2.2]{guillarmou2011identification} with 
						\begin{equation}
							\begin{split}
								\overline{\p}_{\psi}^{-1}f = \mathcal{O}_{L^2}(h^{1/2+\eps}) \quad \text{and}\quad 	\p_{\psi}^{-1}f = \mathcal{O}_{L^2}(h^{1/2+\eps}) \quad \text{as} \quad h\to 0,
							\end{split}
						\end{equation}
						and $\norm{s_2}_{L^2(U)}=\mathcal{O}(h^{1/2+\eps})$, for $\eps>0$ sufficiently small. The derivation \eqref{comp of S_3,1} ensures that the limit 
						\[
						\lim_{h\to 0}\big( hS_{3,1}\big) =0
						\]
						holds. Similarly, since $S_{3,2}$ contains at least one addition $h$ factor, similar analysis gives rises to $	\lim_{h\to 0}\big( hS_{3,2}\big) =0$, which infers that
						\begin{equation}\label{limit for S_3}
							\lim_{h\to 0}\big( hS_3 \big) =0
						\end{equation}			
						as we expect.\\

						{\it Step 4. Analysis of $S_4$.}\\
						
						\noindent Using the Hessian representation \eqref{Hessian matrix in complex variable}, direct computations imply that 
						\begin{equation}\label{S_4 term by term}
							\begin{split}
								&\quad \, \tr  \big(  D^2  (F_{A_1}^{-1}e^{\Phi_1/h}r_1) D^2 (F_{\overline{A_2}}e^{\overline{\Phi_2}/h}\wt r_2) \big) \\
								&=\tr \big\{ D^2  (F_{A_1}^{-1}e^{\Phi_1/h}r_1) \\
								&\qquad \quad \cdot \big[ A\p^2 (F_{\overline{A_2}}e^{\overline{\Phi_2}/h}\wt r_2) + B\op^2 (F_{\overline{A_2}}e^{\overline{\Phi_2}/h}\wt r_2)  +2 I_{2\times 2}\p\op (F_{\overline{A_2}}e^{\overline{\Phi_2}/h}\wt r_2) \big]\big\}\\
								&= \tr\big(AD^2(F_{A_1}^{-1}e^{\Phi_1/h}r_1)\big)\p^2 (F_{\overline{A_2}}e^{\overline{\Phi_2}/h}\wt r_2)  \\
								&\quad \, + \tr \big( BD^2(F_{A_1}^{-1}e^{\Phi_1/h}r_1)\big) \op^2 (F_{\overline{A_2}}e^{\overline{\Phi_2}/h}\wt r_2)  \\
								&\quad \, +\underbrace{2\tr \big(D^2  (F_{A_1}^{-1}e^{\Phi_1/h}r_1)\big)}_{=2\Delta (F_{A_1}^{-1}e^{\Phi_1/h}r_1)} \p\op (F_{\overline{A_2}}e^{\overline{\Phi_2}/h}\wt r_2)  \\
								&=\underbrace{4\op^2 (F_{A_1}^{-1}e^{\Phi_1/h}r_1) \p^2 (F_{\overline{A_2}}e^{\overline{\Phi_2}/h}\wt r_2)}_{\text{By \eqref{trace AD^2}}}+ \underbrace{4\p^2 (F_{A_1}^{-1}e^{\Phi_1/h}r_1) \op^2 (F_{\overline{A_2}}e^{\overline{\Phi_2}/h}\wt r_2) }_{\text{By \eqref{trace BD^2}}}\\
								&\quad \, + \underbrace{8\p \op (F_{A_1}^{-1}e^{\Phi_1/h}r_1) \p\op (F_{\overline{A_2}}e^{\overline{\Phi_2}/h}\wt r_2)}_{\text{By $\Delta=4\p \op$}}.
							\end{split}
						\end{equation}
						Inserting \eqref{S_4 term by term} into $S_4$ given by \eqref{S_1 to S_4 definitions}, we can write 
						\begin{equation}\label{S_4 comp 1}
							\begin{split}
								S_4 :=S_{4,1}+S_{4,2}+S_{4,3},
							\end{split}
						\end{equation}
						where 
						\begin{equation}
							\begin{split}
								S_{4,1}&:=4\int_{U}G\mathbf{v}^\ast\op^2 (F_{A_1}^{-1}e^{\Phi_1/h}r_1) \p^2 (F_{\overline{A_2}}e^{\overline{\Phi_2}/h}\wt r_2)\, dx  , \\
								S_{4,2}&:=4\int_{U}G\mathbf{v}^\ast \p^2 (F_{A_1}^{-1}e^{\Phi_1/h}r_1) \op^2 (F_{\overline{A_2}}e^{\overline{\Phi_2}/h}\wt r_2)\, dx  , \\
								S_{4,3}&:=8\int_{U}G\mathbf{v}^\ast \p \op (F_{A_1}^{-1}e^{\Phi_1/h}r_1) \p\op (F_{\overline{A_2}}e^{\overline{\Phi_2}/h}\wt r_2)\, dx  .
							\end{split}
						\end{equation}

						Now, for $S_{4,1}$, direct computations yields that 
						\begin{equation}
							\begin{split}
								\big|S_{4,1}\big|=4\bigg|\int_{U}G\mathbf{v}^\ast\op^2 (F_{A_1}^{-1}r_1) \p^2 (F_{\overline{A_2}}\wt r_2)e^{(\Phi_1+\overline{\Phi_2})/h}\, dx \bigg|=\mathcal{O}(h^{-1/2+\eps}),
							\end{split}
						\end{equation}
						where we use the better estimate for $r_1$ and \eqref{CZ estimate for r_2} for $\wt r_2$.
						This implies that 
						\begin{equation}
							\lim_{h\to0}\big( hS_{4,1}\big)=0.
						\end{equation}
						For $S_{4,2}$, we can apply a similar method as in Step 1, then an integration by parts gives 
						\begin{equation}
							\begin{split}
								S_{4,2}&=4\int_{U}\p \op \big( G\mathbf{v}^\ast \big)\p (F_{A_1}^{-1}e^{\Phi_1/h}r_1) \op (F_{\overline{A_2}}e^{\overline{\Phi_2}/h}\wt r_2)\, dx  \\
								&\quad \, +4\int_{U} \p \big( G\mathbf{v}^\ast \big)\p \big( e^{\Phi_1/h}\op (F_{A_1}^{-1}r_1) \big) \op (F_{\overline{A_2}}e^{\overline{\Phi_2}/h}\wt r_2)\, dx \\
								&\quad \, + 4 \int_{U} \op  \big( G\mathbf{v}^\ast \big))\p (F_{A_1}^{-1}e^{\Phi_1/h}r_1) \op \big(e^{\overline{\Phi_2}/h}\p (F_{\overline{A_2}}\wt r_2)\big)\, dx \\
								& \quad \, + 4\int_{U} G\mathbf{v}^\ast \p \big( e^{\Phi_1/h}\op (F_{A_1}^{-1}r_1) \big) \op \big(e^{\overline{\Phi_2}/h}\p (F_{\overline{A_2}}\wt r_2)\big)\, dx.
							\end{split}
						\end{equation}
						Using \eqref{eq: def of rh} as in the previous step, it is not hard to see the above integral is of $\mathcal{O}(h^{-1/2+\eps})$, which implies 
						\begin{equation}
							\lim_{h\to0}\big( hS_{4,2}\big)=0.
						\end{equation}
						Similarly arguments can be used in the derivation of $S_{4,3}$, and we can conclude that 				
						\begin{equation}\label{limit for S_4}
							\lim_{h\to 0}\big( hS_4\big) =0 
						\end{equation}
						as wanted. Hence, using \eqref{limit for S_1}, \eqref{limit for S_2}, \eqref{limit for S_3} and \eqref{limit for S_4}, we can summarize that
						\begin{equation}
							\begin{split}
								&\quad \, \lim_{h\to 0}\bigg( h\int_{U} G \mathbf{v}^\ast \tr  \big( \big( D^2  \mathbf{v}^{(1)} \big)  \big( D^2 \mathbf{v}^{(2)} \big)\big)\, dx\bigg) \\
								&= \pi\overline{\p} \big(F_{A^\ast}^{-1}\op \wt G_0  + F_{A^\ast}^{-1}(\mathbf{X}_g \cdot \nabla \Phi^\ast)\wt G_0  \big) \big|_{z=0} -16\pi \op \wt G_1 \big|_{z=0},
							\end{split}
						\end{equation}
						where $\mathbf{v}^{(1)},\mathbf{v}^{(2)},\mathbf{v}^\ast$ are the CGO solutions described as before. It remains to analyze the integral of $Y$.\\

						{\it Step 5. Analysis of the integral of $Y$.}\\

						\noindent Using the relation \eqref{Y=lower order terms}, we can write
						\begin{equation}
							\int_{U} Y \, dx := S_5 +S_6+ S_7,
						\end{equation}
						where 
						\begin{equation}
							\begin{split}
								S_5&:= \int_{U}\mu^{-1}\mathbf{v}^\ast \tr\big(D^2\mathbf{v}^{(1)}\mathcal{C}\cdot \nabla \mathbf{v}^{(2)}\big) \, dx, \\
								S_6&:=\int_{U}\mu^{-1}\mathbf{v}^\ast \tr\big(\mathcal{C}\cdot \nabla \mathbf{v}^{(1)}D^2 \mathbf{v}^{(2)}\big) \, dx, \\
								S_7&:=\int_{U} \mu^{-1} \big(1-c^{-2}\big)\mathbf{v}^\ast  \tr\big(C\cdot \nabla \mathbf{v}^{(1)} C\cdot \nabla \mathbf{v}^{(2)}\big)  \, dx .		
							\end{split}
						\end{equation}
						Similar to the previous analysis, we exploit the structures of the CGO solutions $\mathbf{v}^\ast$, $\mathbf{v}^{(1)}$, and $\mathbf{v}^{(2)}$. Since $S_5$ and $S_6$ share the same structural form, we analyze them jointly.\\
						
						{\it Step 6. Analysis of $S_5$ and $S_6$.}\\
						
						\noindent Recall the tensor function $\mathcal{C} = (\mathcal{C}^k_{ab})_{1 \leq a,b,k \leq 2}$, which is independent of $h > 0$. Using the CGO solutions $\mathbf{v}^{(k)}$ for $k = 1, 2$, the leading terms in $S_5$ and $S_6$ can be written as
						\begin{equation}
							\begin{split}
								&\quad \, \tr\big(D^2\mathbf{v}^{(1)}\mathcal{C}\cdot \nabla \mathbf{v}^{(2)}\big)\\
								&=\tr\big(D^2\big( F_{A_1}^{-1}e^{\Phi_1/h}\big) \mathcal{C}\cdot \nabla \big( F_{\overline{A_2}}e^{\overline{\Phi_2}/h}\big)\big) + \tr\big( D^2\big( e^{\Phi_1/h}r_1\big) \mathcal{C}\cdot \nabla \big( F_{\overline{A_2}}e^{\overline{\Phi_2}/h}\big)\big)\\
								&\quad \, +\tr\big( D^2 \big( F_{A_1}^{-1}e^{\Phi_1/h}\big) \mathcal{C}\cdot \nabla \big( F_{\overline{A_2}}e^{\overline{\Phi_2}/h}\wt r_2\big)\big)\\
								&\quad\, +\tr\big( D^2\big(F_{A_1}^{-1} e^{\Phi_1/h}r_1\big)  \mathcal{C}\cdot \nabla \big( F_{\overline{A_2}}e^{\overline{\Phi_2}/h}\wt r_2\big)\big)
							\end{split}
						\end{equation}
						and 
						\begin{equation}
							\begin{split}
								&\quad \, \tr\big(\mathcal{C}\cdot \nabla \mathbf{v}^{(1)}D^2 \mathbf{v}^{(2)}\big)\\
								&=\tr\big(\mathcal{C}\cdot \nabla \big( F_{A_1}^{-1}e^{\Phi_1/h}\big) D^2\big( F_{\overline{A_2}} e^{\overline{\Phi_2}/h}\big)\big)+\tr\big(\mathcal{C}\cdot \nabla \big( F_{A_1}^{-1}e^{\Phi_1/h}r_1\big)  D^2 \big( F_{\overline{A_2}}e^{\overline{\Phi_2}/h}\big)\big) \\
								&\quad \, + \tr\big(\mathcal{C}\cdot \nabla \big( F_{A_1}^{-1}e^{\Phi_1/h}\big) \big( D^2\big(  F_{\overline{A_2}}e^{\overline{\Phi_2}/h}\wt r_2\big)\big)\big)\\
								&\quad \, +\tr\big(\mathcal{C}\cdot \nabla \big(F_{A_1}^{-1} e^{\Phi_1/h}r_1\big)  D^2 \big( F_{\overline{A_2}}e^{\overline{\Phi_2}/h}\wt r_2\big)\big).
							\end{split}
						\end{equation}
						Then we can write $S_5$ and $S_6$ into
						\begin{equation}
							\begin{split}
								S_5 &:= S_{5,1}+S_{5,2}+S_{5,3}+S_{5,4},\\
								S_6 &:=S_{6,1}+S_{6,2}+S_{6,3}+S_{6,4},
							\end{split}
						\end{equation}
						where 
						\begin{equation}
							\begin{split}
								S_{5,1} &: = \int_{U}\mu^{-1}\mathbf{v}^\ast\big[ \tr\big( D^2\big( F_{A_1}^{-1}e^{\Phi_1/h}\big) \mathcal{C}\cdot \nabla \big(F_{\overline{A_2}} e^{\overline{\Phi_2}/h}\big)\big)\big]\, dx ,\\
								S_{5,2} &: = \int_{U}\mu^{-1}\mathbf{v}^\ast  \big[  \tr\big( D^2\big(F_{A_1}^{-1} e^{\Phi_1/h}r_1\big)  \mathcal{C}\cdot \nabla \big( F_{\overline{A_2}}e^{\overline{\Phi_2}/h}\big)\big)\big]\, dx, \\
								S_{5,3} &: = \int_{U}\mu^{-1}\mathbf{v}^\ast \big[ \tr\big( D^2\big(F_{A_1}^{-1}e^{\Phi_1/h}\big) \mathcal{C}\cdot \nabla \big( F_{\overline{A_2}}e^{\overline{\Phi_2}/h}\wt r_2\big)\big)\big] \, dx, \\
								S_{5,4} &: =\int_{U}\mu^{-1}\mathbf{v}^\ast \big[ \tr\big(D^2\big(F_{A_1}^{-1} e^{\Phi_1/h}r_1\big)  \mathcal{C}\cdot \nabla \big( F_{\overline{A_2}}e^{\overline{\Phi_2}/h}\wt r_2\big)\big) \big] \, dx,
							\end{split}
						\end{equation}
						and 
						\begin{equation}
							\begin{split}
								S_{6,1} &: = \int_{U}\mu^{-1}\mathbf{v}^\ast \big[ \tr\big(\mathcal{C}\cdot \nabla \big(F_{A_1}^{-1} e^{\Phi_1/h}\big)  D^2 \big(  F_{\overline{A_2}}e^{\overline{\Phi_2}/h}\big)\big)\big] \, dx,\\
								S_{6,2} &: = \int_{U}\mu^{-1}\mathbf{v}^\ast \big[ \tr\big(\mathcal{C}\cdot \nabla \big(F_{A_1}^{-1} e^{\Phi_1/h}r_1\big) D^2 \big( F_{\overline{A_2}} e^{\overline{\Phi_2}/h}\big)\big)  \big] \, dx , \\
								S_{6,3} &: = \int_{U}\mu^{-1}\mathbf{v}^\ast \big[ \tr\big(\mathcal{C}\cdot \nabla \big( F_{A_1}^{-1}e^{\Phi_1/h}\big)  D^2\big( F_{\overline{A_2}} e^{\overline{\Phi_2}/h}\wt r_2\big)\big) \big]\, dx, \\
								S_{6,4} &: =\int_{U}\mu^{-1}\mathbf{v}^\ast\big[ \tr\big(\mathcal{C}\cdot \nabla \big( F_{A_1}^{-1}e^{\Phi_1/h}r_1\big)  D^2 \big(F_{\overline{A_2}} e^{\overline{\Phi_2}/h}\wt r_2\big)\big) \big] \, dx .
							\end{split}
						\end{equation}

						{\it Step 6-1: Analysis of $S_{5,1}$ and $S_{6,1}$.}\\
						
						\noindent Let us use the same technique in previous steps, using \eqref{Hessian matrix in complex variable} and the stationary phase expansion \eqref{stationary phase expansion}, then direct computations give 
						\begin{equation}\label{S_5-1 decomp}
							\begin{split}
								S_{5,1}&=\sum_{k=1}^2\int_{U}\mu^{-1}\mathbf{v}^\ast\p^2\big(F_{A_1}^{-1}  e^{\Phi_1/h} \big) \p_{x_k}\big( F_{\overline{A_2}}e^{\overline{\Phi_2}/h}\big) \tr \big(  A  \mathcal{C}^k \big)\, dx\\
								& \quad \, + \sum_{k=1}^2 \int_{U}\mu^{-1}\mathbf{v}^\ast \op^2 \big(F_{A_1}^{-1}  e^{\Phi_1/h} \big) \p_{x_k}\big( F_{\overline{A_2}}e^{\overline{\Phi_2}/h}\big) \tr \big(  B  \mathcal{C}^k \big)\, dx\\
								&\quad \, +2 \sum_{k=1}^2 \int_{U}\mu^{-1}\mathbf{v}^\ast \p\op \big(F_{A_1}^{-1}  e^{\Phi_1/h} \big) \p_{x_k}\big(F_{\overline{A_2}} e^{\overline{\Phi_2}/h}\big) \tr \big(  \mathcal{C}^k \big)\, dx,
							\end{split}
						\end{equation}
						where $A$ is the $2\times 2$ complex-valued matrix given in \eqref{matrices A B}, and $\mathcal{C}^k=\big(\mathcal{C}^k_{ab}\big)_{1\leq a, b\leq 2}$ is given by \eqref{def of mathcal C}, for $k=1,2$. Using \eqref{d and d-bar relation}, we note that $\p_{x_1}e^{\overline{\Phi_2}/h}=(\p+\overline \p)e^{\overline{\Phi_2}/h}=-\frac{1}{2h} \overline z e^{\overline{\Phi_2}/h}$ and $\p_{x_2} e^{\overline{\Phi_2}/h}=\mathsf{i}(\p-\overline \p)e^{\overline{\Phi_2}/h}=\frac{\mathsf{i}}{2h} \overline z  e^{\overline{\Phi_2}/h}$ so that $\p_{x_k}\big( e^{\overline{\Phi_2}/h}\big)$ will contribute a factor of $\overline{z}$ for $k=1,2$. Moreover, as in the previous steps, the governing terms arise when the derivatives act on $e^{\Phi_1/h}$ and $e^{\overline{\Phi_2}/h}$. Thus, we can write $S_{5,1}:=S_{5,1}^m+S_{5,1}^r$, where 
						\begin{equation}\label{S_5,1^m}
							\begin{split}
								S_{5,1}^m&:=\sum_{k=1}^2\int_{U}\mu^{-1}\mathbf{v}^\ast F_{A_1}^{-1}F_{\overline{A_2}}\p^2\big(  e^{\Phi_1/h} \big) \p_{x_k}\big( e^{\overline{\Phi_2}/h}\big) \tr \big(  A  \mathcal{C}^k \big)\, dx,
							\end{split}
						\end{equation}
						and $S_{5,1}^r = S_{5,1} - S_{5,1}^m$ is of lower order, as it contains an additional factor of $h$. Here, we use $\op^2 \big(F_{A_1}^{-1}  e^{\Phi_1/h} \big) =e^{\Phi_1/h} \op^2 F_{A_1}^{-1}$ and $\p\op \big(F_{A_1}^{-1}  e^{\Phi_1/h} \big) =\p \big( e^{\Phi_1/h} \op F_{A_1}^{-1}   \big)$ that produces an extra $h$ factor.

						Therefore, the stationary phase expansion \eqref{stationary phase expansion} can be applied to compute $S_{5,1}^m$ such that 
						\begin{equation}\label{S_5,1^m comp}
							\begin{split}
								S_{5,1}^m&=\sum_{k=1}^2\int_{U}\wt \mu\mathbf{v}^\ast \p^2\big( e^{\Phi_1/h} \big) \p_{x_k}\big( e^{\overline{\Phi_2}/h}\big) \tr \big(  A  \mathcal{C}^k \big)\, dx\\
								& = \frac{1}{h}\int_{U} \wt \mu(1+r^\ast)\big[ \frac{1}{h^2}\big( 1+\frac{z}{4}\big)^2+\frac{1}{4h}\big] \overline{z} \Big[ \frac{-1}{2} \tr \big(  A  \mathcal{C}^1 \big)+ \frac{\mathsf{i}}{2}\tr \big(  A  \mathcal{C}^2 \big) \Big] e^{\mathsf{i}x_1x_2/h}\, dx\\
								&=\frac{2\pi}{h} \Big( \p^2-\overline{\p}^2\big)\Big\{ \wt \mu\big( 1+\frac{z}{4}\big)^2 \overline{z} \Big[ \frac{-1}{2} \tr \big(  A  \mathcal{C}^1 \big)+ \frac{\mathsf{i}}{2}\tr \big(  A  \mathcal{C}^2 \big) \Big] \Big\}\Big|_{z=0}  +\mathcal{O}(1) \\
								&=\frac{2\pi}{h} \overline{\p}^2\Big\{ \wt\mu\big( 1+\frac{z}{4}\big)^2 \overline{z} \Big[ \frac{1}{2} \tr \big(  A  \mathcal{C}^1 \big)- \frac{\mathsf{i}}{2}\tr \big(  A  \mathcal{C}^2 \big) \Big] \Big\}\Big|_{z=0}   + \mathcal{O}(1)  \\
								&=\frac{2\pi}{h}\overline{\p}\Big\{ \mu^{-1}\big( 1+\frac{z}{4}\big)^2 \Big[ \frac{1}{2} \tr \big(  A  \mathcal{C}^1 \big)- \frac{\mathsf{i}}{2}\tr \big(  A  \mathcal{C}^2 \big) \Big] \Big\}\Big|_{z=0}   +\mathcal{O}(1)\\
								&=\frac{\pi}{h}\op\big\{ \wt \mu \big[ \tr \big(  A  \mathcal{C}^1 \big)-\mathsf{i}\tr \big(  A  \mathcal{C}^2 \big) \big] \big\}\big|_{z=0} + \mathcal{O}(1)
							\end{split}
						\end{equation}
						as $h\to 0$, where 
						\begin{equation}\label{tilde mu}
							\wt \mu := \mu^{-1}F_{A_1}^{-1}F_{\overline{A_2}}
						\end{equation}
						is a bounded function independent of $h>0$. Meanwhile, it is clear that $S_{5,1}^r=\mathcal{O}(1)$, and we omit the derivation.
						
						Similar analysis can be utilized for $S_{6,1}$. By writing $S_{6,1}:=S_{6,1}^m+S_{6,1}^r$, where 
						\begin{equation}\label{S_6-1^m comp}
							\begin{split}
								S_{6,1}^m&:=\sum_{k=1}^2 \int_{U} \wt \mu \mathbf{v}^\ast \p_{x_k}\big(e^{\Phi_1/h}\big) \overline{\p}^2 \big( e^{\overline{\Phi_2}/h}\big)   \tr (B\mathcal{C}^k) \, dx  \\
								&\ =\frac{1}{h} \int_{U} \overline{\p}^2 \big[\wt \mu(1+r^\ast)\big(\tr (B\mathcal{C}^1)  + \mathsf{i}\tr (B\mathcal{C}^2)\big) \big] \big(1+\frac{z}{4}\big) e^{\mathsf{i}x_1x_2/h}\, dx \\
								&\ =\mathcal{O}(1) \quad \text{as}\quad h\to 0,
							\end{split}
						\end{equation}
						where we have applied twice integration by parts in the above computations, and $\wt \mu$ is given by \eqref{tilde mu}. Here we used the Dirichlet and Neumann data of $\mathcal{C}^k$ are zero, for $k=1,2$.
						To summarize, on the one hand, using \eqref{S_5,1^m comp} and $S_{5,1}^r=\mathcal{O}(1)$, one can see that %\f{Seems to be ok}
						\begin{equation}\label{limit S_5-1}
							\lim_{h\to 0}\big( hS_{5,1}\big) =\pi\op\big\{ \wt \mu \big[ \tr \big(  A  \mathcal{C}^1 \big)-\mathsf{i}\tr \big(  A  \mathcal{C}^2 \big) \big] \big\}\big|_{z=0} .
						\end{equation}	
						On the other hand, with \eqref{S_6-1^m comp} at hand, we can ensure
						\begin{equation}\label{limit S_6-1}
							\begin{split}
								\lim_{h\to 0} \big( hS_{6,1} \big) =0,
							\end{split}
						\end{equation}
						since $S_{6,1}^r$ has a better decay in $h$ than $S_{6,1}^m$.\\

						{\it Step 6-2. Analysis of $S_{5,2}$ and $S_{6,2}$.}\\

						\noindent Note that the difference between $S_{5,2}$ and $S_{5,1}$ lies in the presence of an additional remainder term $r_1$ in the integrand, which enjoys better decay properties and admits a favorable asymptotic expansion. On the one hand, when the derivative does not act on $r_1$, the analysis proceeds in the same way as the analysis for $S_2$: applying the stationary phase method \eqref{stationary phase expansion} yields the desired vanishing limit. On the other hand, if the derivative falls on $r_1$, we can still invoke the stationary phase expansion, as an additional factor of $\overline{z}$ is always present due to the absence of any remainder term $\wt r_2$ in these computations.
						
						A similar strategy applies to $S_{6,2}$. By integrating by parts in the $\overline{\partial}$ operator and exploiting the holomorphic properties of $\mathbf{v}^\ast$ and $\mathbf{v}^{(1)}$ from their phase functions, the analysis mirrors that of \eqref{S_6-1^m comp}. As the arguments follow closely, we omit further details. In summary, we can obtain
						\begin{equation}\label{limit S_{5,2} and S_{6.2}}
							\lim_{h\to 0}\left( h S_{5,2} \right) = \lim_{h\to 0}\left( h S_{6,2} \right) = 0.
						\end{equation}
						It remains to analyze the terms $S_{5,3}$, $S_{5,4}$, $S_{6,3}$, and $S_{6,4}$, which involve the remainder term $r_2$.\\

						{\it Step 6-3. Analysis of $S_{5,3}$ and $S_{6,3}$.}\\
						
						\noindent Let us first analyze $S_{5,3}$ with the same strategy by using \eqref{Hessian matrix in complex variable} as before, then we can compute each term in $S_{5,3}$ as 
						\begin{equation}
							S_{5,3}:=S_{5,3}^m + S_{5,3}^r,
						\end{equation}
						where 
						\begin{equation}
							\begin{split}
								S_{5,3}^m&:= \sum_{k=1}^2\int_{U}\wt \mu\mathbf{v}^\ast \tr (A\mathcal{C}^k)\p^2 \big(e^{\Phi_1/h}\big)\p_{x_k}\big( e^{\overline{\Phi_2}/h}\wt r_2\big)\big) \, dx \\
								&= \int_{U}\wt \mu\mathbf{v}^\ast \tr (A\mathcal{C}^1)\p^2 \big(e^{\Phi_1/h}\big)\big( \p +\overline{\p}\big) \big( e^{\overline{\Phi_2}/h}\wt r_2\big)\big) \, dx\\
								&\quad \, +\mathsf{i}\int_{U}\wt \mu\mathbf{v}^\ast \tr (A\mathcal{C}^2)\p^2 \big(e^{\Phi_1/h}\big)\big(\p -\overline{\p}\big) \big( e^{\overline{\Phi_2}/h}\wt r_2\big)\big) \, dx.
							\end{split}
						\end{equation}
						Here, the term $S_{5,3}^r$ consists of those contributions in which at least one derivative acts on either $F_{A_1}^{-1}$ or $F_{\overline{A_2}}$, and $\widetilde{\mu}$ is the function defined in \eqref{tilde mu}.

						By writing $S_{5,3}^m:= \sum_{k,\ell=1}^2 S_{5,3}^{k,\ell}$, such that 
						\begin{equation}
							\begin{split}
								S_{5,3}^{1,1}&:=\int_{U}\wt \mu\mathbf{v}^\ast \tr (A\mathcal{C}^1)\p^2 \big(e^{\Phi_1/h}\big)\overline{\p} \big( e^{\overline{\Phi_2}/h}\wt r_2\big)\big) \, dx ,\\
								S_{5,3}^{1,2}&:=-\mathsf{i}\int_{U}\wt \mu\mathbf{v}^\ast \tr (A\mathcal{C}^2)\p^2 \big(e^{\Phi_1/h}\big)\overline{\p}\big( e^{\overline{\Phi_2}/h}\wt r_2\big)\big) \, dx,\\
								S_{5,3}^{2,1}&:= \int_{U}\wt \mu\mathbf{v}^\ast \tr (A\mathcal{C}^1)\p^2 \big(e^{\Phi_1/h}\big)\p \big( e^{\overline{\Phi_2}/h}\wt r_2\big)\big) \, dx, \\ 
								S_{5,3}^{2,2}&:= \mathsf{i}\int_{U}\wt \mu\mathbf{v}^\ast \tr (A\mathcal{C}^2)\p^2 \big(e^{\Phi_1/h}\big)\p \big( e^{\overline{\Phi_2}/h}\wt r_2\big)\big) \, dx.						
							\end{split}
						\end{equation}
						Let us only analyze $S_{5,3}^{1,1}$, and $S_{5,3}^{2,1}$ has a similar structure. Applying the integration by parts, one can obtain 
						\begin{equation}\label{S_5,3,1,1-1}
							\begin{split}
								S_{5,3}^{1,1} &= \int_{U}\wt \mu\mathbf{v}^\ast \tr (A\mathcal{C}^1)\p^2 \big(e^{\Phi_1/h}\big)\overline{\p} \big( e^{\overline{\Phi_2}/h}\wt r_2\big)\big) \, dx \\
								&=-\int_{U} \overline{\p}\big[ \wt \mu\mathbf{v}^\ast \tr (A\mathcal{C}^1) \big]\p^2 \big(e^{\Phi_1/h}\big)e^{\overline{\Phi_2}/h}\wt r_2\, dx\\
								&= -\int_{U} \overline{\p}\big[ \wt \mu\tr (A\mathcal{C}^1) \big] (1+r^\ast) \Big(\frac{1}{h^2}\big(1+\frac{z}{4}\big)^2 + \frac{1}{4h} \Big) e^{\mathsf{i}x_1x_2/h}\wt r_2\, dx \\
								&\quad \, \underbrace{- \int_{U}  \wt\mu\tr (A\mathcal{C}^1) \overline{\p}r^\ast \Big(\frac{1}{h^2}\big(1+\frac{z}{4}\big)^2 + \frac{1}{4h} \Big) e^{\mathsf{i}x_1x_2/h}\wt r_2\, dx}_{\text{By $\overline{\p}\big( e^{\Phi^\ast/h}(1+r^\ast)\big) =e^{\Phi^\ast/h}\overline{\p}r^\ast$ }}.
							\end{split}
						\end{equation}
						It is easy to see that the second term in the right-hand side of \eqref{S_5,3,1,1-1} is of order $\mathcal{O}(h^{-1/2+\eps})$, and we only need to consider the first term in the right-hand side of \eqref{S_5,3,1,1-1}. To this end, we can apply \cite[Proposition 3.9]{CLLT2023inverse_minimal}, such that the first term is of order $o(1/h)$. Therefore, we can have 
						\begin{equation}
							S_{5,3}^{1,1} = \mathcal{O}( h^{-1+\eps }), \text{ as }h\to 0,
						\end{equation}
						which leads 
						\begin{equation}
							\lim_{h\to 0} \big( h S_{5,3}^{1,1} \big)=0.
						\end{equation}
						Similar arguments can be applied to $S_{5,3}^{1,2}$, and one can conclude
						\begin{equation}\label{limit S_5,3,1}
							\lim_{h\to 0} \big( h S_{5,3}^{1,1} \big)=\lim_{h\to 0} \big( h S_{5,3}^{1,2} \big)=0.
						\end{equation}
						When $\p$ hits $e^{\overline{\Phi_2}/h}r_2$, the analysis will become more complicated.
						Before analyzing $S_{5,3}^{2,1}$ and $S_{5,3}^{2,2}$, let us look into $S_{6,3}$.
						
						Similar to the analysis for $S_{5,3}$, let us write $S_{6,3}=S_{6,3}^m + S_{6,3}^r$, where 
						\begin{equation}
							\begin{split}
								S_{6,3}^m:= \int_{U}\wt \mu\mathbf{v}^\ast \big[ \tr\big(\mathcal{C}\cdot \nabla \big( e^{\Phi_1/h}\big) \big( D^2\big(  e^{\overline{\Phi_2}/h}\wt r_2\big)\big)\big) \big]\, dx:= S_{6,3}^1 +S_{6,3}^2 + S_{6,3}^3, 
							\end{split}
						\end{equation}
						and $S_{6,3}^r=S_{6,3}-S_{6,3}^m$ contains those contributions in which at least one derivative acts on either $F_{A_1}^{-1}$ or $F_{\overline{A_2}}$.
						Thanks to \eqref{Hessian matrix in complex variable} again, we can write $S_{6,3}^m:=S_{6,3}^1+S_{6,3}^2+S_{6,3}^3$, where 
						\begin{equation}\label{S_6,3}
							\begin{split}
								S_{6,3}^1 &:=\sum_{k=1}^2\int_{U}\wt \mu\mathbf{v}^\ast \tr\big(B\mathcal{C}^k) \p_{x_k} \big( e^{\Phi_1/h}\big)\overline{\p}^2\big(  e^{\overline{\Phi_2}/h}\wt r_2\big)\, dx :=S_{6,3}^{1,1}+S_{6,3}^{1,2},	\\
								S_{6,3}^2 &:= \sum_{k=1}^2\int_{U}\wt\mu\mathbf{v}^\ast  \tr\big(A\mathcal{C}^k\big) \p_{x_k} \big( e^{\Phi_1/h}\big)  \p^2\big(  e^{\overline{\Phi_2}/h}\wt r_2\big)\, dx := S_{6,3}^{2,1}+ S_{6,3}^{2,2},\\
								S_{6,3}^3 &:= 2\sum_{k=1}^2\int_{U}\wt\mu\mathbf{v}^\ast \tr\big(\mathcal{C}^k \big) \p_{x_k} \big( e^{\Phi_1/h}\big) \p \op \big(  e^{\overline{\Phi_2}/h}\wt r_2\big)\, dx:=S_{6,3}^{3,1}+S_{6,3}^{3,2}.
							\end{split}
						\end{equation}
						Here 
						\begin{equation}
							\begin{split}
								S_{6,3}^{1,1}&:=\int_{U}\wt\mu\mathbf{v}^\ast \tr\big(B\mathcal{C}^1) \p\big( e^{\Phi_1/h}\big)\overline{\p}^2\big(  e^{\overline{\Phi_2}/h}\wt r_2\big)\, dx \\
								S_{6,3}^{1,2}&:= \mathsf{i}\int_{U}\wt\mu\mathbf{v}^\ast \tr\big(B\mathcal{C}^2) \p \big( e^{\Phi_1/h}\big)\overline{\p}^2\big(  e^{\overline{\Phi_2}/h}\wt r_2\big)\, dx, 
							\end{split}
						\end{equation}
						\begin{equation}
							\begin{split}
								S_{6,3}^{2,1}&:=\int_{U}\wt\mu\mathbf{v}^\ast  \tr\big(A\mathcal{C}^1\big) \p \big( e^{\Phi_1/h}\big)  \p^2\big(  e^{\overline{\Phi_2}/h}\wt r_2\big)\, dx, \\
								S_{6,3}^{2,2}&:= \mathsf{i}\int_{U}\wt\mu\mathbf{v}^\ast  \tr\big(A\mathcal{C}^2\big) \p \big( e^{\Phi_1/h}\big)  \p^2\big(  e^{\overline{\Phi_2}/h}\wt r_2\big)\, dx
							\end{split}
						\end{equation}
						and 
						\begin{equation}
							\begin{split}
								S_{6,3}^{3,1}&:=2\int_{U}\wt\mu\mathbf{v}^\ast \tr\big(\mathcal{C}^1 \big) \p \big( e^{\Phi_1/h}\big)\p \op \big(  e^{\overline{\Phi_2}/h}\wt r_2\big)\, dx ,\\
								S_{6,3}^{3,2}&:=2\mathsf{i} \int_{U}\wt\mu\mathbf{v}^\ast \tr\big(\mathcal{C}^2 \big) \p \big( e^{\Phi_1/h}\big) \p \op \big(  e^{\overline{\Phi_2}/h}\wt r_2\big)\, dx.
							\end{split}
						\end{equation}
						
						For $S_{6,3}^{1,1}$, via twice integration by parts for $\op$, one can obtain
						\begin{equation}
							\begin{split}
								S_{6,3}^{1,1}=\frac{1}{h}\int_{U}\op^2\big( \wt\mu\mathbf{v}^\ast \tr (B\mathcal{C}^1)\big)\big(1+\frac{z}{4}\big) e^{(\Phi_1+\overline{\Phi_2})/h}\wt r_2\, dx=\mathcal{O}(h^{-1/2+\eps}),
							\end{split}
						\end{equation}
						where we used $r_2 =\mathcal{O}_{L^2}(h^{1/2+\eps})$ and the term $\op^2\big( \wt\mu\mathbf{v}^\ast \tr (B\mathcal{C}^1)\big)$ will not generate extra $1/h$ since its phase is holormorphic. Similar assertion holds for $S_{6,3}^{1,2}$, so we can conclude 
						\begin{equation}\label{limit of S_6,3,1}
							\lim_{h\to 0}\big( hS_{6,3}^1\big)=0.
						\end{equation}

						Recalling that $S_{6,3}^2$ can be written as $S_{6,3}^2=S_{6,3}^{2,1}+S_{6,3}^{2,2}$, let us first analyze $S_{6,3}^{2,1}$. Applying an integration by parts formula, one has
						\begin{equation}
							\begin{split}
								S_{6,3}^{2,1}&= -\int_{U} \p \big( \wt\mu\mathbf{v}^\ast \tr\big( A\mathcal{C}^1\big) \big) \p \big( e^{\Phi_1/h}\big)  \p\big(  e^{\overline{\Phi_2}/h}\wt r_2\big)\, dx\\
								&\quad \, -\underbrace{ \int_{U}\wt\mu\mathbf{v}^\ast  \tr\big(A\mathcal{C}^1\big) \p^2 \big( e^{\Phi_1/h}\big)  \p\big(  e^{\overline{\Phi_2}/h}\wt r_2\big)\, dx}_{=S_{5,3}^{2,1}},
							\end{split}
						\end{equation}
						which implies 
						\begin{equation}
							S_{5,3}^{2,1}+S_{6,3}^{2,1}= -\int_{U} \p \big(\wt \mu\mathbf{v}^\ast \tr\big( A\mathcal{C}^1\big) \big) \p \big( e^{\Phi_1/h}\big)  \p\big(  e^{\overline{\Phi_2}/h}\wt r_2\big)\, dx.
						\end{equation}			
						Now, for the right-hand side in the above equation, direct computations imply 
						\begin{equation}\label{S_6,3 comp 1}
							\begin{split}
								&\quad \,  \int_{U} \p \big( \wt\mu\mathbf{v}^\ast \tr\big( A\mathcal{C}^1\big) \big) \p \big( e^{\Phi_1/h}\big)  \p\big(  e^{\overline{\Phi_2}/h}\wt r_2\big)\, dx\\
								&=\int_{U} \p \big( \wt\mu\tr\big( A\mathcal{C}^1\big) \big) \mathbf{v}^\ast  \p \big( e^{\Phi_1/h}\big) e^{\overline{\Phi_2}/h}\p\wt r_2\, dx \\
								&\quad \, +\int_{U} \wt\mu\tr\big( A\mathcal{C}^1\big) \p \mathbf{v}^\ast  \p \big( e^{\Phi_1/h}\big) e^{\overline{\Phi_2}/h}\p\wt  r_2\, dx\\
								&=\frac{1}{h} \int_{U} \p \big( \wt\mu\tr\big( A\mathcal{C}^1\big) \big) \mathbf{v}^\ast  \big( 1+\frac{z}{4}\big) e^{(\Phi_1 +\overline{\Phi_2})/h}\p \wt r_2\, dx \\
								&\quad \, + \frac{1}{h}\int_{U} \wt\mu\tr\big( A\mathcal{C}^1\big)  \p \big( e^{\Phi^\ast/h}r^\ast\big) \big( 1+\frac{z}{4}\big) e^{(\Phi_1 +\overline{\Phi_2})/h}\p \wt r_2\, dx \\
								&\quad\, + \frac{1}{h^2}\int_{U}\wt \mu\tr\big( A\mathcal{C}^1\big)   \big( -1 +\frac{z^2}{16}\big) e^{\mathsf{i}x_1x_2/h}\p \wt r_2 \, dx.
							\end{split}
						\end{equation} 
						It is easy to see that the first two terms in the right-hand side of \eqref{S_6,3 comp 1} can be estimated by $\mathcal{O}(h^{-1/2+\eps})$, which gives rise to
						\begin{equation}
							\begin{split}
								&\quad \,  \lim_{h\to 0}\int_{U} \p \big(\wt \mu\tr\big( A\mathcal{C}^1\big) \big) \mathbf{v}^\ast  \big( 1+\frac{z}{4}\big) e^{(\Phi_1 +\overline{\Phi_2})/h}\p \wt r_2\, dx\\
								&=\lim_{h\to 0}\int_{U} \wt\mu\tr\big( A\mathcal{C}^1\big)  \p \big( e^{\Phi^\ast/h}r^\ast\big) \big( 1+\frac{z}{4}\big) e^{(\Phi_1 +\overline{\Phi_2})/h}\p \wt r_2\, dx \\
								&=0.
							\end{split}
						\end{equation}
						Thus, it remains to estimate the last term in the right-hand side of \eqref{S_6,3 comp 1}.

						For a certain term, using the relation
						\begin{equation}\label{eq:tilder2}
							\wt r_2 =-\p^{-1}_\psi\wt s_2=-\p^{-1}e^{-\mathsf{i}x_1x_2/h}\wt s_2,
						\end{equation}
						we can conclude that
						\begin{equation}\label{troublesome term1}
							\begin{split}
								&\quad \, \lim_{h\to 0}\bigg( \frac{1}{h} \int_{U}\wt \mu\tr\big( A\mathcal{C}^1\big)   \big( -1 +\frac{z^2}{16}\big) e^{\mathsf{i}x_1x_2/h}\p \wt r_2 \, dx  \bigg)\\
								&=-\lim_{h\to 0}\bigg( \frac{1}{h} \int_{U}\wt \mu\tr\big( A\mathcal{C}^1\big)   \big( -1 +\frac{z^2}{16}\big) e^{\mathsf{i}x_1x_2/h}\underbrace{\p \p^{-1}\big( e^{-\mathsf{i}x_1x_2/h} V'\wt s_2 \big)}_{=e^{-\mathsf{i}x_1x_2/h}V'\wt s_2}  \, dx  \bigg)\\
								&=-\lim_{h\to 0}\bigg( \frac{1}{h} \int_{U}\wt \mu\tr\big( A\mathcal{C}^1\big)   \big( -1 +\frac{z^2}{16}\big)V'\wt s_2 \, dx\bigg)\\
								&=-\lim_{h\to 0}\bigg[ \frac{1}{h}\int_{U}\wt \mu \tr (A\mathcal{C}^1) \big(-1+\frac{z^2}{16}\big)V' \bigg( \p^{\ast -1} (e^{\mathsf{i}x_1x_2/h}V)\\
								&\qquad\qquad \qquad  + \sum_{k=1}^\infty \wt T_h^k  \p^{\ast-1}\big(e^{\mathsf{i}x_1x_2/h}V \big) \bigg) \bigg]\, dx
							\end{split}
						\end{equation}
						Let us look at the first term in the right-hand side of \eqref{troublesome term1}. Applying the integration by parts formula \eqref{eq:integration by parts for d} and the stationary phase formula \eqref{stationary phase expansion}, we have
						\begin{equation}
							\begin{split}
								&\quad \, \frac{1}{h}\int_U \wt \mu \tr (A\mathcal{C}^1)\big(-1+\frac{z^2}{16}\big)V' \p^{\ast-1}\big(e^{\mathsf{i}x_1x_2/h}V\big) \, dx\\
								&=-\frac{1}{h}\int_U \p^{\ast-1}\Big( \wt \mu \tr (A\mathcal{C}^1)\big(-1+\frac{z^2}{16}\big)V'\Big) \big(e^{\mathsf{i}x_1x_2/h}V\big) \, dx\\
								&\longrightarrow - 2\pi\p^{\ast-1}\Big( \wt \mu \tr (A\mathcal{C}^1)\big(-1+\frac{z^2}{16}\big)V'\Big) V \Big|_{z=0}\\
								&= 2\pi\mathsf{i}\op^{-1}\Big( \wt \mu \tr (A\mathcal{C}^1)\big(-1+\frac{z^2}{16}\big)V'\Big) V \Big|_{z=0}
							\end{split}
						\end{equation}
						as $h\to 0$. Recalling that here we also extended $\mathcal{C}^1$ by zero, outside $\Omega$. This extension is $C^2$ thanks to the boundary determination (recall that since we are in holomorphic coordinates $\p^{\ast-1}=\op^{-1}$). Since the first term in the right-hand side of \eqref{troublesome term1} is of $\mathcal{O}(1)$, note that $\wt T_h$ satisfies \eqref{Th norm estimate}, which ensures that the second term in the right-hand side of \eqref{troublesome term1} decays faster than $\mathcal{O}(1)$ as $h \to 0$. Hence, 
						\begin{equation}
							\begin{split}
								&\quad \,  \lim_{h\to 0}\bigg( \frac{1}{h} \int_{U}\wt \mu\tr\big( A\mathcal{C}^1\big)   \big( -1 +\frac{z^2}{16}\big) e^{\mathsf{i}x_1x_2/h}\p \wt r_2 \, dx  \bigg) \\
								&= 2\pi\mathsf{i}\op^{-1}\Big( \wt \mu \tr (A\mathcal{C}^1)\big(-1+\frac{z^2}{16}\big)V'\Big) V \Big|_{z=0}.
							\end{split}
						\end{equation}
						Similarly, one can use the same derivation for $S_{6,3}^{2,2}$ together with $S_{5,3}^{2,2}$.
						Therefore, to summarize, we must have 
						\begin{equation}\label{limit of S_3,3,2+S_6,3,2}
							\begin{split}
								\lim_{h\to0} \big[ h \big(S_{5,3}^{2,1}+ S_{6,3}^{2,1}\big)\big]&=	2\pi\mathsf{i}\op^{-1}\Big( \wt \mu \tr (A\mathcal{C}^1)\big(-1+\frac{z^2}{16}\big)V'\Big) V \Big|_{z=0}, \\
								\lim_{h\to0} \big[ h \big(S_{5,3}^{2,2}+ S_{6,3}^{2,2}\big)\big]&=	-2\pi\op^{-1}\Big( \wt \mu \tr (A\mathcal{C}^2)\big(-1+\frac{z^2}{16}\big)V'\Big) V \Big|_{z=0}.
							\end{split}
						\end{equation}
						
						Similar to previous methods, for $S_{6,3}^{3,1}$, an integration by parts for $\op$ yields 
						\begin{equation}
							\begin{split}
								S_{6,3}^{3,1}=-\int_{U}\underbrace{\op\big[\wt\mu\mathbf{v}^\ast \tr\big(\mathcal{C}^1 \big)\big]}_{=\mathcal{O}_{L^2}(1)} \big(1 +\frac{z}{4} \big) \p  e^{(\Phi_1+\overline{\Phi_2})/h}\p \wt r_2\, dx =\mathcal{O}(h^{-1/2+\eps}),
							\end{split}
						\end{equation}
						and same estimate holds for $S_{6,3}^{3,2}$. 	Now, we have $\lim_{h\to0}\big(hS_{5,3}^m\big)=\lim_{h\to0}\big(hS_{6,3}^m\big)=0$, then this implies that the lower terms satisfy $\lim_{h\to0}\big(hS_{5,3}^r\big)=\lim_{h\to0}\big(hS_{6,3}^r\big)=0$ as well. Hence, one has 
						\begin{equation}\label{limit S_6,3,3}
							\begin{split}
								\lim_{h\to0}\big( hS_{5,3}^{3}\big)=\lim_{h\to0}\big( hS_{6,3}^{3}\big) =0.
							\end{split}
						\end{equation}

						\bigskip
						
						{\it Step 6-4. Analysis of $S_{5,4}$ and $S_{6,4}$.}\\
						
						\noindent For $S_{5,4}$, using \eqref{Hessian matrix in complex variable} and similar to previous arguments, one can compute
						\begin{equation}
							\p_{x_k}\big( e^{\overline{\Phi_2}/h}r_2\big) = \mathcal{O}_{L^2}(h^{-1/2+\eps}),
						\end{equation}
						for $k=1,2$.
						Thanks to the better estimate for $r_1, \p r_1, \op r_1=\mathcal{O}_{L^2}(h)$, from the above computations, one can easily see that 
						\begin{equation}
							\begin{split}
								S_{5,4}= \int_{U}\wt\mu\mathbf{v}^\ast \big[ \underbrace{\tr\big(\big( D^2\big( e^{\Phi_1/h}r_1\big) \big)}_{=\mathcal{O}_{L^2}(1)} \mathcal{C}\cdot \underbrace{\nabla \big( e^{\overline{\Phi_2}/h}\wt r_2\big)}_{=\mathcal{O}_{L^2}(h^{-1/2+\eps})}\big) \big] \, dx =\mathcal{O}(h^{-1/2+\eps}),
							\end{split}
						\end{equation}
						which implies 
						\begin{equation}\label{limit of S_5,4}
							\lim_{h\to 0}\big( hS_{5,4}\big) =0.
						\end{equation}
						
						For $S_{6,4}$, note that $\nabla \big( e^{\Phi_1/h}r_1 \big) =\mathcal{O}_{L^2}(1)$, by \eqref{Hessian matrix in complex variable}, then there holds that 
						\begin{equation}
							\begin{split}
								S_{6,4}&=\int_{U}\mu^{-1}\mathbf{v}^\ast\big[ \tr\big(\mathcal{C}\cdot \nabla \big( F_{A_1}^{-1}e^{\Phi_1/h}r_1\big) \\
								&\qquad \quad  \cdot  \big[ A\p^2 (F_{\overline{A_2}}e^{\overline{\Phi_2}/h}\wt r_2) + B\op^2 (F_{\overline{A_2}}e^{\overline{\Phi_2}/h}\wt r_2)  +2 I_{2\times 2}\p\op (F_{\overline{A_2}}e^{\overline{\Phi_2}/h}\wt r_2) \big]\big\}\big] \, dx .
							\end{split}
						\end{equation}
						As in the previous analysis, the term $S_{6,4}$ involves second derivatives acting on $\wt r_2$.  
						Hence, by applying the Calderón–Zygmund estimate \eqref{CZ estimate for r_2} to $\wt r_2$, we obtain
						\[
						|S_{6,4}| = \mathcal{O}(h^{-1/2+\varepsilon}).
						\]
						In particular, this implies
						\begin{equation}\label{limit of S_6,4}
							\lim_{h \to 0} \big( h S_{6,4} \big) = 0.
						\end{equation}

						\bigskip
						
						{\it Step 7. Analysis of $S_{7}$.}\\
						
						\noindent In $S_7$, there is only one derivative on $\mathbf{v}^{(1)}$ and $\mathbf{v}^{(2)}$, from the above analysis, we know that 
						\begin{equation}
							\begin{split}
								\nabla \mathbf{v}^{(1)}=\frac{1}{h}\left( \begin{matrix}
									1  \\
									\mathsf{i}   \end{matrix} \right)\big( 1+\frac{z}{4}\big) e^{\Phi_1/h}+\mathcal{O}(1), \quad 	\nabla \mathbf{v}^{(2)}=-\frac{1}{h}\left( \begin{matrix}
									1  \\
									-\mathsf{i}   \end{matrix} \right)\frac{\overline{z}}{2}e^{\overline{\Phi_2}/h}+\mathcal{O}(1) .
							\end{split}
						\end{equation}	
						Using the same trick as before, an integration by parts argument between the holomorphic and antiholomorphic functions ensures that 
						\begin{equation}\label{limit of S_7}
							\lim_{h\to 0}\big(h S_7 \big) =0.
						\end{equation}

						{\it Step 8. Finalization.}\\
						
						\noindent From Step 1 to Step 7, with the integral identity \eqref{integral id.} at hand, we can conclude that the only nonzero terms come from $S_1$ and $S_5$, which are 
						\begin{equation}\label{governing term in asym analy 1}
							\begin{split}
								0&= \lim_{h\to 0} \Big(h \sum_{k=1}^{7} S_k \Big) \\
								&=\pi\overline{\p} \big(F_{A^\ast}^{-1}\op \wt G_0  + F_{A^\ast}^{-1}(\mathbf{X}_g \cdot \nabla \Phi^\ast)\wt G_0  \big) \big|_{z=0} -16\pi \op \wt G_1 \big|_{z=0}\\
								&\quad \,  + \pi\op\big\{ \wt \mu \big[ \tr \big(  A  \mathcal{C}^1 \big)-\mathsf{i}\tr \big(  A  \mathcal{C}^2 \big) \big] \big\}\big|_{z=0}+ 2\pi\mathsf{i}\op^{-1}(E(w)) V \Big|_{z=0},
							\end{split}
						\end{equation}
						where $E(w):= \wt \mu(w) \big( \tr (A\mathcal{C}^1(w))+\mathsf{i}\tr(A\mathcal{C}^2(w))\big)\big(-1+\frac{w^2}{16}\big)V'(w)$.
						We next vary the critical point $z=0$ to the entire domain $U$ by shifting the phases of the CGOs. By doing so, the local terms will be just evaluated at $z\in U$ instead of $z=0$. However, in the nonlocal term $\op^{-1}(E(w))(z)$ in \eqref{governing term in asym analy 1} the function $\big( -1 +\frac{w^2}{16}\big)$ is obtained from  phase functions with evaluating point $w=0$. Thus, changing the phase, changes the function $\big( -1 +\frac{w^2}{16}\big)$ in the nonlocal term.

						Let us give more details to the above observation regarding the nonlocal term. When we choose the critical point to be $a\in \C$, the corresponding phase functions become 
						\begin{equation}
							\Phi_1(z)=(z-a)+\frac{(z-a)^2}{8}, \  \Phi_2(z)=-\frac{1}{4}(z-a)^2, \ \text{and}\  \Phi^\ast (z)=-(z-a)+\frac{(z-a)^2}{8}.
						\end{equation}
						Following similar derivations as in Step 6-3, we can see that the nonlocal lower order term will be given by $\op^{-1}\Big( \wt \mu \big( \tr (A\mathcal{C}^1)+\mathsf{i}\tr(A\mathcal{C}^2)\big)\big(-1+\frac{(w-a)^2}{16}\big)V'\Big)(z) V (z)\Big|_{z=a}$. More concretely,  one may compute 
						\begin{equation}
							\begin{split}
								&\quad \,	\op^{-1}\Big( \wt \mu \big( \tr (A\mathcal{C}^1)+\mathsf{i}\tr(A\mathcal{C}^2)\big)\big(-1+\frac{(w-a)^2}{16}\big)V'\Big) V \Big|_{z=a} \\
								&=-\op^{-1}\Big( \wt \mu \big( \tr (A\mathcal{C}^1)+\mathsf{i}\tr(A\mathcal{C}^2)\big)\Big) V \Big|_{z=a} \\
								&\quad \, + \frac{1}{16}\int \wt \mu \big( \tr (A\mathcal{C}^1)+\mathsf{i}\tr(A\mathcal{C}^2)\big) \frac{(w-a)^2}{w-z}\, d\overline{w}\wedge dw \Big|_{z=a} \\
								&=-\op^{-1}\Big( \wt \mu \big( \tr (A\mathcal{C}^1)+\mathsf{i}\tr(A\mathcal{C}^2)\big)\Big) V \Big|_{z=a} \\
								&\quad \, + \frac{1}{16}\int \wt \mu \big( \tr (A\mathcal{C}^1)+\mathsf{i}\tr(A\mathcal{C}^2)\big)(w-a)\, d\overline{w}\wedge dw \\
								&=-\op^{-1}\Big( \wt \mu \big( \tr (A\mathcal{C}^1)+\mathsf{i}\tr(A\mathcal{C}^2)\big)\Big) V \Big|_{z=a} + c_1 a  + c_2,
							\end{split}
						\end{equation}
						where
						\begin{equation}
							\begin{split}
								c_1&:= -\frac{1}{16}\int \wt \mu \big( \tr (A\mathcal{C}^1)+\mathsf{i}\tr(A\mathcal{C}^2)\big)\, d\overline{w}\wedge dw \in \C ,\\
								c_2&:=  \frac{1}{16}\int \wt \mu \big( \tr (A\mathcal{C}^1)+\mathsf{i}\tr(A\mathcal{C}^2)\big)w\, d\overline{w}\wedge dw \in \C, \\
							\end{split}
						\end{equation}
						are some constants, and we used the function $ \wt \mu \big( \tr (A\mathcal{C}^1)+\mathsf{i}\tr(A\mathcal{C}^2)\big)=\wt \mu \big( \tr (AC^1)+\mathsf{i}\tr(AC^2)\big)\mathbf{c}$ is compactly supported in $\C$, so that the above integrals must be finite.

						Let us write 
						$$
						H(z)=-2\mathsf{i}( c_1z-c_2),
						$$ 
						for the linear function in $z$, which is thus holomorphic. Using \eqref{def of G} and \eqref{def of mathcal C}, by translation, we can vary the critical point $z=0$ (or $z=a$) of the phase functions in CGOs to arbitrary points $z\in U$, then the identity \eqref{governing term in asym analy 1} yields that 
						\begin{equation}\label{governing term in asym analy 2}
							\begin{split}
								&\quad \, \overline{\p} \big(F_{A^\ast}^{-1}\op \wt G_0  + F_{A^\ast}^{-1}(\mathbf{X}_g \cdot \nabla \Phi^\ast)\wt G_0  \big)  -16\op \wt G_1 \\
								& \quad \, + \op\big\{ \wt \mu \big[ \tr \big(  A  \mathcal{C}^1 \big)-\mathsf{i}\tr \big(  A  \mathcal{C}^2 \big) \big] \big\} \\
								& = \underbrace{2\mathsf{i}V}_{:=\beta}\op^{-1}\Big( \underbrace{\wt \mu \big( \tr (A\mathcal{C}^1)+\mathsf{i}\tr(A\mathcal{C}^2)\big)}_{:=\gamma \mathbf{c}}\Big)   + H  \text{ in }U.
							\end{split}
						\end{equation}
						Using the definitions \eqref{def: tilde G_0 in conformal}, \eqref{def: tilde G_1 in conformal}, \eqref{tilde mu} and \eqref{def of mathcal C} of $\wt G_0$, $\wt G_1$, $\wt \mu$ and $\mathcal{C}^k$ ($k=1,2$), respectively, by setting
						\begin{equation}
							\mathbf{c} := 1-c^{-2} \in C^2(\R^2),
						\end{equation}
						where $\mathbf{c}$ has been extended by zero to $\R^2 \setminus \Omega$ at the outset of the proof, the equation \eqref{governing term in asym analy 2} will take the form
						\begin{equation}\label{governing term in asym analy 3}
							\op\big(A\op \mathbf{c}+\alpha\mathbf{c}\big)
							= \beta(z)\op^{-1}(\gamma \mathbf{c}) + H
							\text{ in } U,
						\end{equation}
						for some functions $\alpha,\beta,\gamma$ independent of $\mathbf{c}$, where the leading coefficient $A$ of the second derivatives in the equation \eqref{governing term in asym analy 3} is non-vanishing. In particular, we can find $A$ explicitly by 
						\begin{equation}
							\begin{split}
								A &:= \mu^{-1}F_{A^\ast}^{-1}F_{A_1}^{-1}F_{\overline{A_2}},\\
								\gamma&:= \mu^{-1}F_{A_1}^{-1}F_{\overline{A_2}}\big( \tr (AC^1) +\mathsf{i}\tr(AC^2)\big),
							\end{split}
						\end{equation} 
						and $\alpha,\beta$ are some functions that can be computed and are independent of $\mathbf{c}$. Note that by \eqref{equ: F_A nonzero}, $A(z)\neq 0$ for all $z\in U$. 
						
						Since $\mathbf{c}=0$ in $U \setminus \Omega$ and recall that $H$ is holomorphic, the UCP of Lemma~\ref{lemma: UCP with Carleman} applied to \eqref{governing term in asym analy 3} yields $\mathbf{c} \equiv 0$ in all of $U$.  
						As $\mathbf{c}=1-c^{-2}=0$ in $U$, it follows that $c^{-2}=1$ in $U$, and in particular in $\Omega$.  
						Finally, using $c>0$ in $\Omega$, we conclude
						\[
						c \equiv 1 \quad \text{in } \Omega,
						\]
						which completes the proof.
					\end{proof}

					\section{Proof of Theorem \ref{theorem: uniqueness}}\label{sec: proof of uniqueness}
					
					With all arguments in previous sections, we have proved Theorem \ref{theorem: uniqueness}. For the sake of completeness, let us explain the arguments again.

					\begin{proof}[Proof of Theorem \ref{theorem: uniqueness}]
						Let us split the proof into several steps:
						\begin{itemize}
							\item {\it Step 1}. Using $\left. F_1\right|_{\p\Omega}=\left.F_2\right|_{\p\Omega}$, the boundary determination (see Lemma \ref{lemma: boundary determination}) shows that $ D^2u_0^{(1)}\big|_{\p\Omega}=D^2u_0^{(2)}\big|_{\p\Omega}$.
							
							\item {\it Step 2}.  Lemma~\ref{lemma: nonlinear to linear DN} shows that the relation \eqref{eq: same DN map} determines ,
							\begin{equation}\label{linearized DN same in the proof}
								\Lambda'_{g_1}(\phi)=\Lambda'_{g_2}(\phi), \quad \text{for any }\phi \in C^\infty(\p \Omega),
							\end{equation} 
							where $\Lambda'_{g_j}$ denotes the DN map of \eqref{non-divergence elliptic equ}, for $j=1,2$. 
							
							\item {\it Step 3}. Since $\Omega$ is a uniformly convex domain, it must be a simply connected domain. Hence, applying Theorem \ref{theorem: J=Id}, the condition \eqref{linearized DN same in the proof} implies that there exists $c>0$ with $c|_{\p\Omega}=1$ such that $g_1=cg_2$ in $\Omega$.
							
							\item {\it Step 4}. Theorem \ref{theorem: uniqueness of conformal factor} yields that $c=1$ in $\Omega$. This shows that $g_1=g_2$ in $\overline{\Omega}$. In other words, $D^2u_0^{(1)}=D^2u_2^{(0)}$ in $\overline{\Omega}$, which implies 
							\begin{equation}
								F_1 = \det D^2u_0^{(1)} = \det D^2u_2^{(0)}=F_2 \text{ in }\Omega,
							\end{equation}
							where we used $u_0^{(j)}$ are solutions to \eqref{MA equation zero boundary j=1,2}, for $j=1,2$. 
						\end{itemize}
						This concludes the proof.
					\end{proof}

					\section*{Statements and Declarations}
					
					\noindent\textbf{Data availability statement.}
					No datasets were generated or analyzed during the current study.
					
					\bigskip
					
					\noindent\textbf{Conflict of Interests.} Hereby, we declare there are no conflicts of interest.

					\bigskip
					
					\noindent\textbf{Acknowledgments.} The authors would like to thank Professor Mikko Salo for fruitful discussions to improve this article. T.~L. was partly supported by the Academy of Finland (Centre of Excellence in Inverse Modelling and Imaging and FAME Flagship, grant numbers 312121 and 359208). Y.-H. L. is partially supported by the National Science and Technology Council (NSTC) of Taiwan, under projects 113-2628-M-A49-003 and 113-2115-M-A49-017-MY3. Y.-H. L. is also a Humboldt research fellow (for experienced researchers).

					\bibliography{ref} 
					
					\bibliographystyle{alpha}
					
				\end{document}